\newcommand{\bL}{\bar{\ell}}
\newcommand{\bP}{\overline{P}}
\newcommand{\bG}{\bar{G}}
\newcommand{\tC}{\widetilde{C}}
\newcommand{\hsigma}{\widehat{\sigma}}
\newcommand{\CV}{\text{CV}}
\newcommand{\select}{\text{select}}
\newcommand{\inter}{\text{inter}}
\newcommand{\Ee}{\mathbb{E}}
\newcommand{\Pp}{\mathbb{P}}
\newcommand{\Q}{\mathbb{Q}}
\newcommand{\mQ}{\widehat{\cal Q}}
\newcommand{\M}{\mathbb{M}}
\newcommand{\hbeta}{\hat{\beta}}
\newcommand{\tP}{\widetilde{P}}
\newcommand{\law}{\mathcal{L}}
\newcommand{\cH}{\mathcal{H}}
\newcommand{\cD}{\mathcal{D}}
\newcommand{\cQ}{\mathcal{Q}}
\newcommand{\bX}{X}
\newcommand{\bbeta}{\bar{\beta}}
\newcommand{\sF}{\mathcal{F}}
\newcommand{\selectionevent}{K}
\newcommand{\paramset}{\mathbf{\Theta}}
\def\mrm#1{\mathrm{#1}}
\def\integers{\mathbb{Z}} 
\def\norm#1{\left\|{#1}\right\|} 
\providecommand{\argmin}{\mathop\mathrm{arg min}}
\providecommand{\diag}{\mathop\mathrm{diag}}
\providecommand{\sign}{\mathop\mathrm{sign}}
\def\supp#1{\mathrm{supp}({#1})}
\def\E{\mathbb{E}} 
\def\Earg#1{\E\left[{#1}\right]}
\def\Esubarg#1#2{\E_{#1}\left[{#2}\right]}
\def\P{\mathbb{P}} 
\def\Parg#1{\P\left({#1}\right)}
\def\Psubarg#1#2{\P_{#1}\left[{#2}\right]}
\def\Cov{\mrm{Cov}} 
\def\Covarg#1{\Cov\left[{#1}\right]}
\def\Var{\mrm{Var}} 
\def\Vararg#1{\Var\left[{#1}\right]}
\def\abv#1{\left|#1\right|}
\newcommand{\iid}{\stackrel{\mathrm{iid}}{\sim}}
\newcommand{\modelweight}{\mathbb{W}}
\newcommand{\F}{\mathbb{F}}
\newcommand{\real}{\mathbb{R}}
\newtheorem{theorem}{Theorem}
\newtheorem{lemma}[theorem]{Lemma}
\newtheorem{definition}[theorem]{Definition}
\newtheorem{remark}[theorem]{Remark}
\newtheorem{example}{Example}
\def\Bin{\textnormal{Bin}}
\def\Unif{\textnormal{Unif}}
\def\Exp{\textnormal{Exp}}
\def\Logistic{\textnormal{Logistic}}
\begin{document} 
\begin{frontmatter}

\title{Selective inference with a randomized response}
\runtitle{Selective inference with a randomized response}

\begin{aug}
\author{\fnms{Xiaoying} \snm{Tian}\corref{}\ead[label=e1]{xtian@stanford.edu},  
\fnms{Jonathan} \snm{Taylor}\ead[label=e2]{jonathan.taylor@stanford.edu}\thanksref{t1}
}
\runauthor{Tian and Taylor}

\affiliation{$^1$Stanford University}

\address{Department of Statistics\\ Stanford University 
\\ Sequoia Hall \\ Stanford, CA 94305, USA \\ \printead{e1} \\
\printead*{e2} } 

\thankstext{t1}{Supported in part by NSF grant DMS 1208857 and
AFOSR grant 113039.}
\end{aug}

\begin{abstract} 
Inspired by sample splitting and the reusable holdout introduced in the field
of differential privacy, we consider selective inference with a randomized
response. We discuss two major advantages of using a randomized response for
model selection. First, the selectively valid tests are more powerful after
randomized selection. Second, it allows consistent estimation and weak
convergence of selective inference procedures.  Under independent sampling, we
prove a selective (or privatized) central limit theorem that transfers
procedures valid under asymptotic normality without selection to their
corresponding selective counterparts. This allows selective inference in
nonparametric settings. Finally, we propose a framework of inference after
combining multiple randomized selection procedures.  We focus on the classical
asymptotic setting, leaving the interesting high-dimensional asymptotic
questions for future work.
\end{abstract} 

\begin{keyword}[class=AMS]
\kwd[Primary ]{62M40}
\kwd[; secondary ]{62J05}
\end{keyword}

\begin{keyword}
\kwd{selective inference}
\kwd{nonparametric}
\kwd{differential privacy}
\end{keyword}

\end{frontmatter}

\section{Introduction}
\label{introduction}

\cite{EDA} promoted the use of {\em exploratory data analysis} to examine
the data and possibly formulate hypotheses for further investigation. Nowadays,
many statistical learning methods allow us to perform these exploratory
data analyses, based on which we can posit a model on the data generating distribution. 
Since this model is not given a priori, classical statistical inference will not
provide valid tests that control the Type-I errors. 

{\em Selective inference} seeks
to address this problem, see \cite{exact_lasso, covtest, exact_screening, optimal_inference}.
Loosely speaking, there are two stages in selective inference. The first is the
{\em selection} stage that explores the data and formulates a plausible model
for the data distribution. Then we enter the {\em inference} stage that seeks
to provide valid inference under the selected model which is proposed after inspecting
the data.
Inference under different models have been studied, notably the Gaussian families
\cite{exact_lasso, tian2015selective, exact_screening} as well as other exponential
families \cite{optimal_inference}. 

In this work, we consider selective inference in a general setting that
include nonparametric settings. In addition, we introduced the use of
{\em randomized response} in model selection. A most common example of
randomized model selection is probably the practice of data splitting.
Assuming independent sampling, we can divide the data into two subsets, using
the first for model selection and the second subset for inference.
Though not emphasized, this split is often {\em random}. Hence, data splitting
can be thought of as a special case of randomized model selection.  To motivate
the use of randomized selection and introduce the inference problem that
ensues, we consider the following example.

\subsection{A first example}
\label{sec:file:drawer}

Publication bias, (also called the ``file drawer effect'' by \cite{file_drawer}) is a bias
introduced to scientific literature by failure to report negative or non-confirmatory results.
We formulate the problem in the simple example below.

\begin{example}[File drawer problem]
\label{example1}

Let 
$$\bar{X}_n = \frac{1}{n} \sum_{i=1}^n X_{i,n}$$ be the sample mean of a sample of $n$ $\mrm{iid}$ draws
from $\F_n$ in a standard triangular array. We set $\mu_n = \E_{\F_n}[X_{1,n}]$
and assume $\E_{\F_n}[(X_{1,n}-\mu_n)^2]=1$.

Suppose that we are interested in discovering positive effects
and would only report the sample mean if it survives the file drawer effect, i.e. 
\begin{equation}
\label{eq:file:drawer}
n^{1/2} \bar{X}_n > 2.
\end{equation}
Then what is the ``correct'' $p$-value to
report for an observation $\bar{X}_{n, obs}$ that exceeds the threshold? 
\end{example}

If we have Gaussian family, namely $\F_n = N(\mu_n, 1)$, then the
distribution of $\bar{X}_n$ surviving the file drawer effect \eqref{eq:file:drawer} is
a truncated Gaussian distribution. We also call this distribution
the {\em selective distribution}. Formally, its survival function is
$$
\begin{aligned}
P(t) &= \P\left( \bar{X}_n > t | n^{1/2} \bar{X}_n > 2 \right), \quad
\bar{X}_n \sim N\left(\mu_n, \frac{1}{n}\right) \\
&= \frac{1 - \Phi\left(n^{1/2}(t-\mu_n)\right)}{1 - \Phi(2-n^{1/2}\mu_n)}
\end{aligned}
$$
where $\Phi$ is the CDF of an $N(0,1)$ random variable. Therefore, we get a pivotal
quantity 
\begin{equation}
\label{eq:pivot:univariate}
\begin{aligned}
&P(\bar{X}_{n,obs}) = \frac{1 - \Phi\left(n^{1/2}(\bar{X}_{n,obs}-\mu_n)\right)}{1 - \Phi(2-n^{1/2}\mu_n)} \sim \Unif(0,1),\\
&\hspace{50pt} n^{1/2}\bar{X}_{n, obs} > 2, ~X_{n,obs} \sim N\left(\mu_n, \frac{1}{n}\right)
\end{aligned}
\end{equation}

The pivotal quantity in \eqref{eq:pivot:univariate} allows us to construct $p$-values
or confidence intervals for Gaussian families. When the distributions $\F_n$'s are not
normal distributions, central limit theorem states that the sample mean $\bar{X}_n$ is
asymptotically normal when $\F_n$ has second moments.
Thus a natural question is whether the pivotal
quantity in \eqref{eq:pivot:univariate} is asymptotically $\Unif(0,1)$ when $X_{i,n}$
does not come from a normal distribution?

The following lemma provides a negative answer to this question in the case when
$\F_n$ is a translated Bernoulli distribution that has a negative mean. Essentially when
the selection event $n^{1/2} \bar{X}_n > 2$ becomes a rare event with vanishing
probability, the pivotal quantity in \eqref{eq:pivot:univariate} no longer converges
to $\Unif(0,1)$. We defer the proof of the lemma to the appendix.

\begin{lemma}
\label{lem:counter:eg}
If $X_{i,n}$ takes values in $\{-1.5, 0.5\}$, with $\Parg{X_{i,n} = -1.5} = \Parg{X_{i,n} = 0.5} =0.5$.
Thus $\mu_n = -0.5$. Then the pivot in \eqref{eq:pivot:univariate} does not converge to $\Unif(0,1)$
$$
P(\bar{X}_n) \not\rightarrow \Unif(0,1),
$$
for the $\bar{X}_n$'s surviving the file drawer effect \eqref{eq:file:drawer}.
\end{lemma}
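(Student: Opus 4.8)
The plan is to compute the exact conditional law of the pivot and show that it concentrates, up to a vanishing perturbation, on a geometric lattice, so that no weak limit can be $\Unif(0,1)$. First I would reparametrize the two-point variable by a Bernoulli one: set $Y_{i,n}=(X_{i,n}+1.5)/2\sim\Ber(1/2)$ and $S_n:=\sum_{i=1}^n Y_{i,n}\sim\Bin(n,1/2)$. Since $\bar X_n = 2S_n/n - 3/2$ and $\mu_n=-1/2$, the file drawer event \eqref{eq:file:drawer} reads $S_n>a_n$ with $a_n:=\tfrac34 n+\sqrt n$, and the standardized mean is $n^{1/2}(\bar X_n-\mu_n)=(2S_n-n)/\sqrt n$. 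Let $m_n:=\min\{k\in\integers:k>a_n\}$ and $\theta_n:=m_n-a_n\in(0,1]$; on the selection event we may write $S_n=m_n+J$ with $J\ge 0$ an integer (the \emph{overshoot}), and a direct computation gives
\[
n^{1/2}(\bar X_n-\mu_n)\;=\;2+\tfrac12\sqrt n+\frac{2(\theta_n+J)}{\sqrt n},\qquad 2-n^{1/2}\mu_n\;=\;2+\tfrac12\sqrt n .
\]
Hence, setting $x_n:=2+\tfrac12\sqrt n\to\infty$ and $\delta_n:=2(\theta_n+J)/\sqrt n\ge 0$, the pivot of \eqref{eq:pivot:univariate} becomes $P(\bar X_n)=(1-\Phi(x_n+\delta_n))/(1-\Phi(x_n))$, a \emph{deterministic} function of the single integer $J$.

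The second step is to identify the limiting conditional distribution of $J$. Since $\Parg{S_n=k+1}/\Parg{S_n=k}=(n-k)/(k+1)$ for $\Bin(n,1/2)$, and $m_n+j\sim\tfrac34 n$, each such ratio at $k=m_n+j$ converges to $(1-\tfrac34)/\tfrac34=\tfrac13$ as $n\to\infty$; moreover, because $k\mapsto(n-k)/(k+1)$ is decreasing, all of these ratios are $\le\tfrac12$ for every $j\ge 0$ once $n$ is large. Writing $\Parg{J=j\mid\mathrm{sel}}$ as the normalized product of these ratios, pointwise convergence yields $\Parg{J=j\mid\mathrm{sel}}\to\tfrac23\,3^{-j}$, and the bound $\Parg{J=j\mid\mathrm{sel}}\le 2^{-j}$ (the empty product in the denominator being $1$) supplies a summable dominating sequence. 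So, conditionally on selection, $J$ converges in distribution to the geometric law with mass $\tfrac23\,3^{-j}$ on $j=0,1,2,\dots$.

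Finally I would push these two facts through the Mills-ratio asymptotics $1-\Phi(x)=\phi(x)\,x^{-1}(1+O(x^{-2}))$ as $x\to\infty$. On $\{J=j\}$ we have $x_n\delta_n=(\theta_n+j)+O(n^{-1/2})$, $\delta_n^2=O(n^{-1})$ and $x_n/(x_n+\delta_n)\to1$, so $P(\bar X_n)=e^{-(\theta_n+j)}(1+o(1))$. Passing to any subsequence along which the bounded quantity $\theta_n$ converges to some $\theta\in[0,1]$ (one exists by compactness) and combining with the geometric limit of $J$, we get that $P(\bar X_n)\mid\mathrm{sel}$ converges in distribution, along that subsequence, to the purely atomic law $\nu_\theta:=\sum_{j\ge 0}\tfrac23\,3^{-j}\,\delta_{\,e^{-\theta-j}}$, supported on the lattice $\{e^{-\theta-j}\}_{j\ge 0}$. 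Since $\nu_\theta$ is not $\Unif(0,1)$, a full-sequence weak limit equal to $\Unif(0,1)$ is impossible, which proves the claim. I expect the main obstacle to be Step~2 --- the uniform control of consecutive binomial masses that pins down the geometric overshoot, together with carefully carrying the non-converging fractional-part term $\theta_n$; once that is in hand the Mills-ratio computation is routine, and the conceptual point is simply that the $O(\sqrt n)$ Gaussian tail magnifies the $O(1/\sqrt n)$ lattice spacing of $\bar X_n$ into an $O(1)$ multiplicative spacing of the pivot, forbidding a continuous limit.
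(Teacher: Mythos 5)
Your proposal is correct and follows essentially the same route as the paper's proof: represent $\bar X_n$ through a $\Bin(n,1/2)$ variable, use a Mills-ratio expansion so that the pivot behaves like $e^{-b_n(Z-b_n)}$ with $b_n = 2+\tfrac12\sqrt{n}$, and control the conditional overshoot via the consecutive binomial mass ratios $(n-k)/(k+1)\to 1/3$. The only difference is that you carry the computation to the exact limit --- a geometric overshoot and hence a purely atomic limiting law for the pivot (handling the fractional part by subsequences) --- whereas the paper stops at the tail bound $3^{-t}$, concluding that the overshoot is strictly stochastically dominated by $\Exp(1)$; both arguments suffice to rule out a $\Unif(0,1)$ limit.
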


Randomized selection circumvents this problem. In the following, we propose
a randomized version of the ``file drawer problem''.
\begin{example}[File drawer problem, randomized ]
\label{example2}
We assume the same setup of a triangular array of observations $X_{i,n}$
as in Example \ref{example1}. But instead of reporting $\bar{X}_n$ when
it survives the file drawer effect \eqref{eq:file:drawer},  
we independently draw $\omega \sim G$, and only report $\bar{X}_n$ if 
\begin{equation}
\label{eq:file:drawer:rand}
n^{1/2} \bar{X}_n + \omega > 2.
\end{equation}

Note that the selection event is different from that in \eqref{eq:file:drawer} in
that we randomize the sample mean before checking whether it passes the threshold.
In this case, if $\F_n = N(\mu_n, 1)$, the survival function of $\bar{X}_n$ is
\begin{equation}
\label{eq:pivot:uni:rand}
\begin{aligned}
P(t) &= \P\left( \bar{X}_n > t | n^{1/2} \bar{X}_n + \omega > 2\right), \quad (\bar{X}_n, \omega) \sim N\left(\mu_n, \frac{1}{n}\right) \times G\\ 
&=\P\left( Z  > n^{1/2}(t - \mu_n) | Z  + \omega > 2 - n^{1/2}\mu_n\right), \quad (Z, \omega) \sim N(0,1) \times G.
\end{aligned}
\end{equation}

To compute the exact form of $P(t)$, we have to compute the convolution of $N(0,1)$ and $G$ which
has explicit forms for many distributions $G$. Moreover, when $G$ is Logistic or Laplace distribution,
we have
$$
P(\bar{X}_{n, obs}) \rightarrow \Unif(0,1), 
$$
as long as $\F_n$ has centered exponential moments in
a fixed neighbourhood of $0$. The convergence is in fact uniform for $-\infty < \mu_n < \infty$.
For details, see Lemma \ref{lem:file:drawer} in Section \ref{sec:mean:CLT}.
\end{example}

The only difference between these two examples is the randomization in selection.
After selection, we need to consider the conditional distribution for inference,
which conditions on the selection event.
If we denote by $\F_n^*$ the distribution used for selective inference, we have
in Example \ref{example1}, 
\begin{equation}
\label{eq:likelihood1}
\frac{d\F_n^*}{d\F_n}(\bar{X}_n)  = \frac{1_{\{n^{1/2}\bar{X}_n > 2\}}}{\P_{\F_n}(n^{1/2}\bar{X}_n > 2)}.
\end{equation}
We also call the ratio between $\F^*_n$ and $\F_n$ the {\em selective likelihood ratio}.
In this case, the selective likelihood ratio is simply a restriction to the $\bar{X}_n$'s
that survives the file drawer effect. 
We observe that
$$
\sqrt{n}\bar{X}_n = \sqrt{n}\mu_n + Z, \quad Z \sim N(0,1), 
$$
which leads to three scenarios for selection.

\begin{itemize}
\item {\bf $\mu_n > \delta > 0$}, for some $\delta > 0$. 

In this case, the dominant term for selection is
$\sqrt{n}\mu_n$, and since we have a big positive effect, we would always report
the sample mean $\bar{X}_n$ when $n$ is big. This corresponds to the selection
event having probability tending to $1$ and the selective likelihood ratio goes
to $1$ as well. In this case, there is very little selection bias, and the original
law is a good approximation to the selective distribution for valid inference.

\item {\bf $\mu_n < -\delta < 0$}, for some $\delta > 0$. 

In this case, the dominant term is also $\sqrt{n}\mu_n$,
but in the negative direction. As $n \to \infty$, the selection probability vanishes
and the selective likelihood becomes degenerate.
We almost never report the sample mean in this scenario, but in the rare event where we do,
by no means can we use the original distribution for inference.

\item {\bf $-\delta < n^{1/2}\mu_n < \delta$}, for some $\delta > 0$.

This corresponds to local alternatives.
In this case, the selective likelihood neither converges to $1$ or becomes degenerate.
Rather, it becomes an indicator function of a half interval. Proper adjustment is needed
for valid inference in this case.
\end{itemize}

It is in the second scenario that pivotal quantity \eqref{eq:pivot:univariate} will not
converge to $\Unif(0,1)$. Different distributions will have different behaviors in the
tail. Since the conditioning event $n^{1/2}\bar{X}_n > 2$ becomes a large-deviations event,
we cannot expect it to behave like the normal distribution in the tail.

On the other hand, in Example \ref{example2},
if we denote by $\tilde{\F}_n^*$ the law for selective inference, we have
\begin{equation}
\label{eq:likelihood2}
\frac{d\tilde{\F}_n^*}{d\F_n}(\bar{X}_n) = \frac{\bar{G}(2 - n^{1/2}\bar{X}_n)}{\E_{\F_n}(\bar{G}(2 - n^{1/2}\bar{X}_n))}
= \frac{\bar{G}\left(2 - n^{1/2}(\bar{X}_n - \mu_n) - n^{1/2}\mu_n\right)}{\E_{\F_n}\left[\bar{G}\left(2 - n^{1/2}(\bar{X}_n - \mu_n) - n^{1/2}\mu_n\right)\right]}
\end{equation}
where $\bar{G}(t) = \int_t^{\infty} G(du)$ is the survival function of $G$. When
$\mu_n < -\delta < 0$ for some $\delta > 0$, and $G$ is the Laplace or Logistic distribution
so that $\bar{G}$ has an exponential tail, the dominant term $\exp(n^{1/2} \mu_n)$ in both
the numerator and the denominator will cancel out, making the selective likelihood
ratio properly behaved in this difficult scenario.

It turns out that this selective likelihood ratio is fundamental to formalizing
asymptotic properties of selective inference procedures. Its behavior
determines not only the asymptotic convergence of the pivotal quantities like in
\eqref{eq:pivot:uni:rand}, but also whether consistent estimation of the
population parameters is possible with large samples.

Again in the negative mean scenario where $\mu_n < -\delta < 0$, the sample mean $\bar{X}_n$
surviving the non-randomized ``file drawer effect'' cannot be a consistent estimator
for the underlying means $\mu_n$ because it will always be positive.
But if $\bar{X}_n$ is reported as in Example \ref{example2}, 
it will be consistent for $\mu_n$ even if $\mu_n$ is negative and
bounded away from $0$. For detailed discussion, see Section \ref{sec:nonparametric_setup}.

In general, the behavior of the selective likelihood ratio can be used to
study the asymptotic properties of selective inference procedures.
We study consistent estimation and weak convergence for selective inference procedures
in Section \ref{sec:nonparametric_setup} and Section \ref{sec:CLT} respectively. 

We are especially inspired by the field of differential privacy (c.f.
\cite{reusable_holdout} and references therein) to study the use of
randomization in selective inference. Privatized algorithms purposely randomize
reports from queries to a database in order to allow valid interactive data
analysis. To our understanding, our results are the first results related to
weak convergence in privatized algorithms, as most guarantees provided in the
differentially private literature are consistency guarantees. Some other
asymptotic results in selective inference have also been considered in
\cite{tibshirani2015uniform, tian2015asymptotics}, though these have a slightly
different flavor in that they marginalize over choices of models.

We conclude this section with some more examples.

\subsection{Linear regression}

Consider the linear regression framework with response $y \in \real^n$, and 
feature matrix $X \in \real^{n \times p}$, with $X$ fixed. We make a
homoscedasticity assumption that
$
\Covarg{y | X} = \sigma^2 I,
$
with $\sigma^2$ considered known.
Of interest is 
$$
\mu = \Ee(y|X),
$$
a functional of $\F=\F(X)$ the conditional law of $y$ given $X$. 
When $\F$ is a Gaussian distribution, exact selective tests have 
been proposed for different selection procedures \cite{lasso, spacings, tian2015selective}.
Removing the Gaussian distribution on $\F$, \cite{tian2015asymptotics}
showed that the same tests are asymptotically valid under some conditions.

Randomized selection in this setting is a natural extension of these works. \cite{optimal_inference}
proposed to use a subset of data for model selection, which yields 
a significant increase in power. In this work, we study general randomized selection procedures.
Consider the following example.

Due to the sparsity of the solution of LASSO \cite{lasso}
$$
\hat{\beta}_{\lambda}(y) = \argmin_{\beta \in \real^p} \frac{1}{2} \|y-X\beta\|^2_2 + \lambda \cdot \|\beta\|_1,
$$
a small subset of variables can be chosen for which we want to report $p$-values or confidence intervals.
This problem has been studied in \cite{exact_lasso}. However, instead of using the original response $y$
to select the variables, we can independently draw $\omega \sim \Q$ and choose the variables using $y^* = y + \omega$.
Specifically, we choose subset $E$ by solving
\begin{equation}
\label{eq:lasso:rand}
\hat{\beta}_{\lambda}(y, \omega) = \argmin_{\beta \in \real^p} \frac{1}{2} \|y^*-X\beta\|^2_2 + \lambda \cdot \|\beta\|_1,\quad y^* = y+\omega,
\end{equation}
and take $E = \supp{\hat{\beta}_{\lambda}(y, \omega)}$. In Section \ref{sec:linear:additive}, we discuss how
to carry out inference after this selection procedure, with much increased power. We also discuss the
reason behind this increase in Section \ref{sec:regression:additive}.

\subsection{Nonparametric selective inference}
\label{sec:nonparametric}

All the previous works on selective inference assume a parametric model like the Gaussian
family or the exponential family. In this work, we allow selective inference in a non-parametric
setting. Consider the following examples.

Suppose in a classification problem, we observe independent samples,
$$
(x_i, y_i) \iid \F, \quad (x_i, y_i) \in \real^p \times \{0,1\}.
$$
with fixed $p$. This problem is non-parametric if 
we do not assume any parametric structure for $\F$ and are simply interested in some population parameters
of the distribution $\F$. In Section \ref{sec:CLT},
we developed asymptotic theory to construct an asymptotically valid test for the population parameters of
interest. More details can be found in Section \ref{sec:randomized_logistic}.

Also consider a multi-group problem where a response $x$ is measured on $p$
treatment groups. A special case is the two-sample problem where there are two
groups.  It is of interest to form a confidence interval for the effect size in
the ``best'' treatment group.  This arises often in medical experiments where
multiple treatments are performed and we are interested to discover whether one
of the treatment has a positive effect. The fact we have chosen to report the
``best'' treatment effect exposes us to selection bias and multiple testing
issues \cite{benjamini_fdr}, and therefore calls for adjustment after
selection.  \citet{benjamini_stark} have considered the parametric setting
where $x_j \iid N(\mu_j, \sigma^2)$ for each group.  Suppose for robustness, it
is of interest to report the median effect size instead of the mean (assuming
responses are not symmetric). Then without any assumptions on the distribution
of the measurements, this also becomes a nonparametric problem. But we can
apply the theory in Section \ref{sec:CLT} to cope with this problem, for
details, see Section \ref{sec:median}.

\subsection{Outline of the paper}

There are three main advantages of applying randomization for selective inference,
\begin{itemize}
\item Consistent estimation under the selective distribution
\item Increase in power for selective tests
\item Weak convergence of selective inference procedures
\end{itemize}
 
In the following sections, Section \ref{sec:setup} gives the setup of selective inference and introduced
selective likelihood ratio, which is the key for studying consistent estimation and weak convergence of
selective inference procedures. Section \ref{sec:exact_inference} focuses on linear regression models with different randomization schemes,
demonstrating the increase in power.
Section \ref{sec:CLT} proposes an asymptotic test for the nonparametric
settings. Theorem \ref{thm:weak:conv} proves that the central limit theorem holds under the selective distribution with
mild conditions. Applications to the two examples in Section \ref{sec:nonparametric} are discussed.
This is a result for fixed dimension $p$. Finally, Section \ref{sec:multiple} discusses the possibility of
extending our work to the setting, when multiple selection procedures are performed on different randomizations
of the original data. One application is selective inference after cross validation for the square-root LASSO \cite{sqrt_lasso}.

\section{Selective Likelihood Ratio}
\label{sec:setup}

We first review some key concepts of selective inference. 
Our data $D$ lies in some measurable space $(\cD, \sF)$, with unknown sampling distribution $D \sim \F$. 
Selective inference seeks a reasonable probability model $M$ -- a subset of the probability measures on $(\cD,\sF)$,
and carry out inference in $M$. Central to our discussion is a {\em selection algorithm}, a set-valued map
\begin{equation}
\mQ: \cD \rightarrow {\cal Q}
\end{equation}
where ${\cal Q}$ is loosely defined as being made up of ``potentially interesting statistical questions''. 

For instance, in the linear regression setting, $\cD = \real^n$, our data $D = y$ and
we have a fixed feature matrix $X \in\real^{n \times p}$. The unknown sampling distribution
is $\F = \law(y|X)$, the conditional law of $y$ given $X$.

A reasonable candidate for the range of $\mQ$ might be all linear regression models
indexed by subsets of $\{1, \dots, p\}$ with known or unknown variance. For any selected
subset of variables $E$, we 
carry out selective inference within the model $M = \{N(X_E\beta_E, \sigma^2 I), \beta_E \in \real^{|E|}\}$. 

Since we use the data to choose the model $M$, it is only fair to consider the conditional distribution for inference,
$$
D | M \in \mQ(D), \quad D \sim \F.
$$
Therefore, we seek to control the selective Type-I error: 
\begin{equation}
\P_{M,H_0}( \text{reject $H_0$}\ |\ M  \in \mQ ) \leq \alpha
\label{eq:selective_type_1}
\end{equation}
where $M$ is the selected family of distributions in the range of $\mQ$ and $H_0 \subset M$ is the null hypothesis.  
Selective intervals for parametric models $M$ can then be constructed by inverting such selective hypothesis tests, though only the one-parameter
case has really been considered to date.

\subsection{Randomized selection}

Randomized selection is a natural extension of the framework above.
We enlarge our probability space to include some element of randomization. 
Specifically, let $\cH$ denote an auxiliary probability space and $\Q$ is a probability measure on $\cH$.
A randomized selection algorithm is then simply
$$
\mQ^*:\cD \times \cH \rightarrow \cQ.
$$

Note the randomization is completely under the control of the data analyst and
hence $\Q$ will be fully known.  This is an extension of the non-randomized
selective inference framework in the sense that we can take $\Q$ to be the
Dirac measure at $0$. Many choices of $\mQ^*$ are natural extensions of $\mQ$,
which we will see in many examples.

Randomized selective inference is simply based on the law $\F^*$, which we also call
the {\em selective distribution},
\begin{equation}
\label{eq:selective:law}
D | M \in \mQ^*(D,\omega), ~ (D, \omega) \sim \F \times \Q.
\end{equation}
Note that although randomization is incorporated into selection, inference is still carried
out using the original data $D$, after adjusting for the selection bias by considering the
conditional distribution $\F^*$.

Similar to the selective inference we defined above, we seek to control the selective
Type-I error, 
\begin{equation}
    \label{eq:random:typeIerror}
    \Pp_{\F^*}(\text{reject }H_0) = \Pp_{M,H_0}(\text{reject }H_0|M \in \mQ^*)\leq \alpha.
\end{equation}
Moreover, we also want to achieve good estimation, which makes
\begin{equation}
    \E_{\F^*}((\hat{\theta}(y)-\theta(\F))^2)
\end{equation}
small. 

In Sections \ref{sec:nonparametric_setup} to \ref{sec:CLT}, we will discuss concrete examples
of $\cD$, $D$, $\F$ and $\mQ^*$. But before that we first introduce the selective likelihood
ratio, which is a crucial quantity in studying the selective distribution $\F^*$.

\subsection{Selective likelihood ratio}

Selective likelihood ratio provides a way of connecting the original distribution $\F$ and its
selective counterpart $\F^*$. It is easy to see from \eqref{eq:selective:law} that the selective
distribution is simply a restriction of the $(D, \omega)$'s such that model $M$ will be selected.
Thus $\F^*$ is absolutely continuous with respect to $\F$, and the {\em selective likelihood ratio} is 
\begin{equation}
\label{eq:selective_dbn}
\begin{aligned}
\frac{d\F^*}{d\F}(D) &= \frac{\modelweight(M;D)}{\E_{\F}(\modelweight(M;D))} = \ell_{\F}(D) \qquad  \forall \, \F \in M, \\
\modelweight(M;D) &= \Q\left(\left\{\omega: M \in \mQ^*(D,\omega) \right\} \right). 
\end{aligned}
\end{equation}

The numerator in $\ell_{\F}(D)$ is the restriction of $(D, \omega)$, integrated over the randomizations $\omega$,
and the denominator is simply a normalizing constant. One implication of the selective likelihood ratio 
is that for distributions $\F$ in parametric families, their selective counterparts may have the
same parametric structure. 

\subsubsection{Exponential families}
\label{sec:exponential_family}

One commonly used parametric family is the exponential family.
Assume that $\F=\F_{\theta}$ is an exponential family with natural parameter space $\paramset$ and
$\cD = \real^n$ and the data $D = y$. Its density with respect to the reference measure $d\F_0$ is,
\begin{equation}
\label{eq:exfam}
    \frac{d\F_{\theta}}{d\F_0}(y) =  \exp\{\theta^T T(y) - \psi(\theta)\}, \quad \theta \in \paramset.
\end{equation}

Through the relationship in \eqref{eq:selective_dbn} we conclude, for any randomization
scheme, the law $\F^*_{M,\theta}$ is another exponential family. Formally,

\begin{lemma}
\label{lem:exponential:family}
If $\F_{\theta}$ belongs to the exponential family in \eqref{eq:exfam}, then for {\bf any}
randomized selection procedure $\mQ^*$, the selective distribution is also an exponential family,
$$
\frac{d\F^*_{M,\theta}}{d\F_0}(y) \propto \modelweight(M;y) \exp\{\theta^T T(y) - \psi(\theta)\}, \qquad \theta \in \paramset.
$$
with the same sufficient statistic $T(y)$ and natural parameters $\theta$.

Furthermore, to test $H_{0j}: \theta_j = 0$, we consider the following law,
\begin{equation}
\label{eq:conditional:expfam}
T_j(y) \mid T_{-j}(y), \qquad y \sim \F^*_{M,\theta}.
\end{equation}
\end{lemma}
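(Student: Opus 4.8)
The plan is to apply the general identity \eqref{eq:selective_dbn} directly to the exponential-family density \eqref{eq:exfam}. First I would write, for any fixed model $M$ in the range of $\mQ^*$,
\begin{equation*}
\frac{d\F^*_{M,\theta}}{d\F_0}(y) = \frac{d\F^*_{M,\theta}}{d\F_\theta}(y)\cdot\frac{d\F_\theta}{d\F_0}(y) = \frac{\modelweight(M;y)}{\E_{\F_\theta}(\modelweight(M;y))}\cdot \exp\{\theta^T T(y) - \psi(\theta)\},
\end{equation*}
which is exactly the claimed formula up to the $\theta$-dependent but $y$-free normalizing constant $\E_{\F_\theta}(\modelweight(M;y))$; absorbing that into the proportionality sign gives the stated display. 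The key point to check here is that $\modelweight(M;y)=\Q(\{\omega: M\in\mQ^*(y,\omega)\})$ does not depend on $\theta$: it is a function of $y$ and of the fully known randomization law $\Q$ alone. Hence multiplying the exponential-family density by this $\theta$-free weight preserves the exponential-family form with the same sufficient statistic $T(y)$ and the same natural parameter $\theta$, only changing the carrier (base) measure from $d\F_0$ to $\modelweight(M;\cdot)\,d\F_0$ and the normalizer from $\psi(\theta)$ to its selective analogue $\psi_M(\theta) = \log \E_{\F_0}(\modelweight(M;y)\exp\{\theta^T T(y)\})$. One should also remark that the natural parameter space may shrink from $\paramset$ to $\{\theta: \psi_M(\theta) < \infty\}$, but this contains $\paramset$ whenever $\modelweight(M;\cdot)$ is bounded, which it is since it is a probability.

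For the second claim — that to test $H_{0j}: \theta_j = 0$ one uses the conditional law in \eqref{eq:conditional:expfam} — I would invoke the standard exponential-family fact that in a full-rank exponential family with natural statistic $(T_j(y), T_{-j}(y))$ and parameter $(\theta_j,\theta_{-j})$, the conditional distribution of $T_j(y)$ given $T_{-j}(y)$ depends only on $\theta_j$ and not on the nuisance parameter $\theta_{-j}$; therefore under $H_{0j}$ this conditional law is completely determined (a known one-parameter exponential family at $\theta_j=0$), so it is a valid pivot for constructing a selective test. Since, by the first part, $\F^*_{M,\theta}$ is itself an exponential family with the same sufficient statistic and natural parameter, this standard conditioning argument applies verbatim to $\F^*_{M,\theta}$, which is what \eqref{eq:conditional:expfam} asserts.

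I do not anticipate a deep obstacle: the content is the observation that randomization in the selection step contributes a $\theta$-independent multiplicative factor, so conjugacy-type closure of the exponential family under reweighting by a bounded nonnegative function does the work. The only points requiring a little care are (i) making explicit that $\modelweight(M;y)$ is $\theta$-free and measurable in $y$ (measurability following from Fubini/Tonelli applied to $\Q\times\F$ on the selection event $\{(y,\omega): M\in\mQ^*(y,\omega)\}$), and (ii) noting that the argument requires $\E_{\F_\theta}(\modelweight(M;y))>0$, i.e. the selection event has positive probability under $\F_\theta$ — which is exactly the regime in which selective inference for $M$ is meaningful — so that $\F^*_{M,\theta}$ is well defined and $\F^*_{M,\theta}\ll\F_\theta\ll\F_0$.
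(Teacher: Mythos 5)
Your proposal is correct and follows essentially the same route as the paper: the first claim is the direct application of \eqref{eq:selective_dbn} to \eqref{eq:exfam}, using that $\modelweight(M;y)$ is $\theta$-free so it only modifies the carrier measure, and the second claim is the standard Lehmann--Scheffe conditioning argument (the conditional law of $T_j$ given $T_{-j}$ is free of the nuisance parameters $\theta_{-j}$), which is exactly the construction the paper cites. Your added remarks on measurability of $\modelweight(M;\cdot)$ and positivity of the selection probability are sensible housekeeping but not a departure from the paper's argument.
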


The first claim of the lemma is quite straight-forward using the relationship in \eqref{eq:selective_dbn}.
The second claim is a Lehmann--Scheffe (c.f. Chapter 4.4 in \cite{TSH}) construction which was proposed in \cite{optimal_inference},
to construct tests for one of the natural parameters treating
the others as nuisance parameters. For detailed construction of such tests in the linear regression setting,
see Section \ref{sec:exact_inference}.

\section{Consistent Estimation After Model Selection}
\label{sec:nonparametric_setup}

In this section, we leave the parametric setup and consider general models $M$.
In particular, we study the consistency of estimators under the selective distribution
for arbitrary models. We first introduce the framework of asymptotic analysis under
the selective model. Then we state conditions for consistent estimation in Lemma   
\ref{lem:transfer} and conclude with examples.

For any model $M$, which is a collection of distributions,
we define its corresponding {\em selective model}, which is the collection of
corresponding selective distributions, 
\begin{equation}
\label{eq:selective:model}
M^* = \left\{\F^* : \frac{d\F^*}{d\F}(D) = \ell_{\F}(D), \F \in M \right\},
\end{equation}
where $\ell_{\F}(D)$ is the selective likelihood ratio for the selection
event $\{M \in \mQ^*\}$.
Selective inference is carried out under the selective model $M^*$.

In order to make meaningful asymptotic statements, we consider a sequence
of randomized selection procedures $(\mQ^*_n)_{n \geq 1}$ and models
$(M_n)_{n \geq 1}$ with each $M_n$ in the range of $\mQ^*_n$. 

Often, we are interested in some population parameter $\theta_n$, which
can be thought be as a functional of the distribution $\F_n \in M_n$,
$$
\theta_n:M_n \rightarrow \real.
$$
It is worth pointing out that $M_n$ is selected by $\mQ^*_n$, which already
incorporates the statistical questions we are interested in. In this sense,
$M_n$ is chosen a posteriori. The selected model $M_n^*$ does not change our
target of inference, it merely changes the distribution under which such inference
should be carried out. In other words, if $\theta_n$ is the mean parameter, we
are interested in the underlying mean of $\F_n$, not $\F_n^*$. 

We might have a good estimator $\hat{\theta}_n: \cD \to \real$ for $\theta_n(\F_n)$
under $\F_n$, namely
$$
\Esubarg{\F_n}{(\hat{\theta}_n - \theta_n(\F_n))^2} \to 0.
$$
$\hat{\theta}_n$ is a consistent estimator if our model $M_n$ is given a priori.
But as we use data select $M_n$, what really cares about is its performance under the selective
distribution $\F_n^*$. Will this estimator still be consistent
under the selective distribution $\F_n^*$?

Formally, we say an estimator $\hat{\theta}_n$ is uniformly consistent in $L^p$ for 
$\theta_n(\F_n)$ under the sequence $(M_n)_{n \geq 1}$ if
$$
\limsup_n \sup_{\F_n \in M_n} \|\hat{\theta}_n - \theta_n(\F_n)\|_{L^p(\F_n)} \to 0.
$$
Similarly, we say that $\hat{\theta}_n$ is uniformly consistent in probability for the
functional $\theta_n(\F_n)$ under
the sequence $(M_n)_{n \geq 1}$ if
for every $\epsilon > 0$ there exists $\delta(\epsilon) > 0$ such that for all $\delta \geq \delta(\epsilon)$
$$
\limsup_n \sup_{\F_n \in M_n} \F_n(|\hat{\theta}_n-\theta_n(\F_n)| > \delta) \leq \epsilon.
$$

The following lemma states the conditions for consistency of $\hat{\theta}_n$ under the sequence
of corresponding selective models $(M_n^*)_{n \geq 1}$,

\begin{lemma}
    \label{lem:transfer}
Consider a sequence $(\mQ_n^*,M_n)_{n \geq 1}$ 
of randomized selection procedures and models.
Suppose the selective likelihood ratios satisfies, for some $p>1$,
\begin{equation}
\label{eq:likelihood:Lp}
\limsup_n \sup_{\F_n \in M_n} \|\ell_{\F_n}\|_{L^p(\F_n)} < C.
\end{equation}
Then for any sequence of estimators $\hat{\theta}_n$ uniformly consistent for $\theta_n(\F_n)$ in $L^{\alpha}$,
it is also uniformly consistent for $\theta_n(\F_n)$ in $L^{\gamma}$ under
$(M_n^*)_{n \geq 1}$, $\gamma \leq \alpha / q$, $\frac{1}{p}+\frac{1}{q}=1$.

Further, if $\hat{\theta}_n$ is uniformly consistent for $\theta_n$ in probability, 
then $\hat{\theta}_n$ is uniformly consistent for $\theta_n$ in probability
under the sequence $(M_n^*)_{n \geq 1}$.
\end{lemma}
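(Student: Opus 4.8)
The plan is to transfer consistency from $\F_n$ to $\F_n^*$ by controlling the change of measure with Hölder's inequality, exploiting the uniform $L^p$-bound \eqref{eq:likelihood:Lp} on the selective likelihood ratios. The key observation is that for any nonnegative measurable functional $g_n$ on $\cD$,
$$
\E_{\F_n^*}(g_n) = \E_{\F_n}\!\left(g_n \cdot \ell_{\F_n}\right) \leq \|g_n\|_{L^q(\F_n)} \cdot \|\ell_{\F_n}\|_{L^p(\F_n)},
$$
where $\tfrac1p + \tfrac1q = 1$. This is the single mechanism driving both halves of the lemma.

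For the $L^\gamma$ statement, I would apply the display above with $g_n = |\hat\theta_n - \theta_n(\F_n)|^{\gamma}$, so that
$$
\|\hat\theta_n - \theta_n(\F_n)\|_{L^\gamma(\F_n^*)}^{\gamma} = \E_{\F_n^*}\!\left(|\hat\theta_n - \theta_n(\F_n)|^{\gamma}\right) \leq \left\| |\hat\theta_n - \theta_n(\F_n)|^{\gamma} \right\|_{L^q(\F_n)} \cdot \|\ell_{\F_n}\|_{L^p(\F_n)}.
$$
The first factor on the right is $\|\hat\theta_n - \theta_n(\F_n)\|_{L^{\gamma q}(\F_n)}^{\gamma}$. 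Choosing $\gamma$ so that $\gamma q \leq \alpha$ (i.e. $\gamma \leq \alpha/q$), uniform consistency in $L^\alpha$ under $(M_n)$, together with the monotonicity of $L^r$ norms on probability spaces (so $L^{\gamma q} \le L^\alpha$), forces $\sup_{\F_n \in M_n}\|\hat\theta_n - \theta_n(\F_n)\|_{L^{\gamma q}(\F_n)} \to 0$. Combined with the uniform bound $\limsup_n \sup_{\F_n} \|\ell_{\F_n}\|_{L^p(\F_n)} < C$, the product tends to $0$, and taking $\gamma$-th roots gives $\limsup_n \sup_{\F_n \in M_n}\|\hat\theta_n - \theta_n(\F_n)\|_{L^\gamma(\F_n^*)} \to 0$, which is uniform consistency in $L^\gamma$ under $(M_n^*)$.

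For the in-probability statement, I would take $g_n = \mathbf{1}\{|\hat\theta_n - \theta_n(\F_n)| > \delta\}$, giving
$$
\F_n^*\!\left(|\hat\theta_n - \theta_n(\F_n)| > \delta\right) \leq \F_n\!\left(|\hat\theta_n - \theta_n(\F_n)| > \delta\right)^{1/q} \cdot \|\ell_{\F_n}\|_{L^p(\F_n)}.
$$
Given $\epsilon > 0$, uniform consistency in probability under $(M_n)$ supplies a threshold so that $\limsup_n \sup_{\F_n}\F_n(|\hat\theta_n - \theta_n(\F_n)| > \delta) \leq (\epsilon/C)^{q}$ for all sufficiently large $\delta$; raising to the $1/q$ power and multiplying by the bound on $\|\ell_{\F_n}\|_{L^p}$ yields $\limsup_n \sup_{\F_n}\F_n^*(|\hat\theta_n - \theta_n(\F_n)| > \delta) \leq \epsilon$, as required.

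The only real subtlety — the "main obstacle" such as it is — is bookkeeping the interaction between the suprema over $\F_n \in M_n$ and the $\limsup_n$: one must apply Hölder pointwise in $\F_n$ before taking the supremum, use that the supremum of a product is bounded by the product of suprema, and then pass the $\limsup_n$ through using \eqref{eq:likelihood:Lp}. I would also note that \eqref{eq:likelihood:Lp} guarantees $\F_n^* \ll \F_n$ with $L^p$ density, so all the change-of-measure identities are legitimate, and that the exponents line up exactly because $1/p + 1/q = 1$ makes $\gamma q = \alpha$ the extremal choice. No estimate here is delicate; the content is entirely in the single Hölder step.
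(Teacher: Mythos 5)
Your proposal is correct and follows essentially the same route as the paper: the change of measure $\E_{\F_n^*}(g) = \E_{\F_n}(g\,\ell_{\F_n})$ followed by H\"older with exponents $(p,q)$, applied to $|\hat\theta_n-\theta_n|^{\gamma}$ (using $\|\,|\Delta_n|^{\gamma}\|_{L^q(\F_n)}=\|\Delta_n\|^{\gamma}_{L^{\gamma q}(\F_n)}\leq\|\Delta_n\|^{\gamma}_{L^{\alpha}(\F_n)}$) for the moment statement and to the indicator $\mathbf{1}\{|\Delta_n|>\delta\}$ for the in-probability statement. Your explicit bookkeeping of the suprema over $\F_n\in M_n$ and the $\limsup_n$ is slightly more careful than the paper's write-up, but the mathematical content is identical.
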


\begin{proof}
Let $\Delta_n=\hat{\theta}_n-\theta_n(\F_n)$. 
To prove the first assertion note that for any $\F^*_n \in M^*_n$
$$
\begin{aligned}
\|\Delta_n\|_{L^{\gamma}(\F^*_n)} &= \int_{\cD_n} |\Delta_n|^{\gamma} \ell_{\F_n}(y) \F_n(dy) \\
& \leq \||\Delta_n|^{\gamma}\|_{L^q(\F_n)} \| \ell_{\F_n}(y) \|_{L^p(\F_n)} \\
& = \|\Delta_n\|_{L^{\gamma q}(\F_n)}^{\gamma} \| \ell_{\F_n}(y) \|_{L^p(\F_n)} \\
& \leq \|\Delta_n\|_{L^{\alpha}(\F_n)}^{\gamma} \| \ell_{\F_n}(y) \|_{L^p(\F_n)} \\
& \leq C\|\Delta_n\|_{L^{\alpha}(\F_n)}^{\gamma}
\end{aligned}
$$
    For any $\delta > 0$,
    $$
    \begin{aligned}
    \F_n^*(|\Delta_n|>\delta) &= \int_{\cD_n} \mathbf{1}\{|\Delta_n| > \delta\} \ell_{\F_n}(y) \F_n(dy) \\
    &\leq \left[\F_n(|\Delta_n|>\delta)\right]^{1/q}\|\ell_{\F_n}\|_{L^p(\F)} \\
    &\leq C\left[\F_n(|\Delta_n|>\delta)\right]^{1/q}.
    \end{aligned}
    $$
\end{proof}

We illustrate the application of Lemma \ref{lem:transfer} through our ``file drawer effect'' 
examples in Section \ref{sec:file:drawer}. 

\subsection{Revisit the ``file drawer problem''}

First we note that in Example \ref{example1} and \ref{example2}, we observe data
$D_n = (X_{1, n}, \dots, X_{n,n})$, with $X_{i,n} \sim \F_n$. The randomized selection in
Example \ref{example2} can be realized as
$$
\mQ^*(D_n, \omega) = \begin{cases} 
\text{report p-values for } \bar{X}_n, &\text{  if} \sqrt{n}\bar{X}_n + \omega > 2, \\
\text{do nothing}, &\text{  if} \sqrt{n}\bar{X}_n + \omega \leq 2,
\end{cases}
$$
where we independently draw $\omega \sim G$.

By law of large numbers, we easily see that if we always report $\bar{X}_n$, it will
be an unbiased estimator for $\mu_n$. However, 
since we only observe the sample means surviving the file drawer effect. Will $\bar{X}_n$
still be consistent for $\mu_n$?

In the most difficult scenario discussed in Section \ref{sec:file:drawer}, where
$\mu_n < -\delta <0$ for some $\delta > 0$, $\bar{X}_n$ cannot be a consistent estimator
for $\mu_n$ in Example \ref{example1}. This is easy to see as Example \ref{example1} will only
report positive sample means. A remarkable feature of randomized selection is that
consistent estimation of the population parameters is possible even when the selection
event has vanishing probabilities. In fact, the following lemma states that when
$G$ is a Logistic distribution, $\bar{X}_n$ is consistent for $\mu_n$ after the
randomized file drawer effect in Example \ref{example2}. 

\begin{lemma}
\label{lem:simple:consistent}
Suppose as in Example \ref{example2}, we observe a triangular array with $X_{i,n} \sim \F_n$.
$\F_n$ has mean $\mu_n = \mu < 0$. If we draw $\omega \sim \Logistic(\kappa)$, where $\kappa$
is the scale of the Logistic distribution. Then the sample means $\bar{X}_n$ surviving
the ``randomized'' file drawer effect are consistent for $\mu$,
$$
\bar{X}_n \overset{p}{\to} \mu, \quad \text{conditional on }\sqrt{n}\bar{X}_n + \omega > 2.
$$
if $\F_n$ has moment generating function in a neighbourhood of $0$. Namely, $\exists a>0$, such that
$$
\Esubarg{\F_n}{\exp\left(a \abv{X_{i, n} - \mu_n}\right)} \leq C.
$$
\end{lemma}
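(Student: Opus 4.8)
The plan is to deduce the claim from Lemma~\ref{lem:transfer}, by verifying that the selective likelihood ratio of the randomized file drawer effect is bounded in $L^p$ for some $p>1$, uniformly over the relevant class $M_n$ of sampling distributions (those with mean $\mu$, unit variance, and $\E_{\F_n}[e^{a|X_{1,n}-\mu|}]\le C$). Writing $\bar{G}(t)=(1+e^{t/\kappa})^{-1}$ for the survival function of $\Logistic(\kappa)$, expression \eqref{eq:likelihood2} specializes to
$$
\ell_{\F_n}(\bar{X}_n)=\frac{\bar{G}\!\left(2-\sqrt{n}\bar{X}_n\right)}{\E_{\F_n}\!\left[\bar{G}\!\left(2-\sqrt{n}\bar{X}_n\right)\right]},
\qquad
\|\ell_{\F_n}\|_{L^p(\F_n)}^p=\frac{\E_{\F_n}\!\left[\bar{G}(2-\sqrt{n}\bar{X}_n)^p\right]}{\left(\E_{\F_n}\!\left[\bar{G}(2-\sqrt{n}\bar{X}_n)\right]\right)^p}.
$$
I will bound the numerator from above and the denominator from below. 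The decisive point is that, because $\mu<0$, both are governed by the same deterministic large-deviations factor $e^{p\sqrt{n}\mu/\kappa}$ which then cancels in the ratio --- this is exactly the cancellation hinted at after \eqref{eq:likelihood2}, and the reason the non-randomized version of Lemma~\ref{lem:counter:eg} fails.

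For the numerator, the elementary bound $\bar{G}(t)<e^{-t/\kappa}$ and independence give
$$
\E_{\F_n}\!\left[\bar{G}(2-\sqrt{n}\bar{X}_n)^p\right]\le e^{-2p/\kappa}\,e^{p\sqrt{n}\mu/\kappa}\left(\E_{\F_n}\!\left[e^{p(X_{1,n}-\mu)/(\kappa\sqrt{n})}\right]\right)^{n}.
$$
For $n$ large enough that $p/(\kappa\sqrt{n})\le a/2$, the second-order estimate $e^{y}\le 1+y+\tfrac{1}{2}y^2 e^{|y|}$ combined with the moment generating function hypothesis (which, together with $x^2 e^{(a/2)x}\le C_0 e^{ax}$, gives $\sup_n\E_{\F_n}[(X_{1,n}-\mu)^2 e^{(a/2)|X_{1,n}-\mu|}]\le C''<\infty$) yields $\E_{\F_n}[e^{p(X_{1,n}-\mu)/(\kappa\sqrt{n})}]\le 1+p^2 C''/(2\kappa^2 n)$, so the $n$-th power is at most $e^{p^2 C''/(2\kappa^2)}$, uniformly over $\F_n\in M_n$.

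For the denominator, restrict to the event $\{\sqrt{n}\bar{X}_n\le 2\}$, on which $2-\sqrt{n}\bar{X}_n\ge 0$ and hence $\bar{G}(2-\sqrt{n}\bar{X}_n)\ge\tfrac{1}{2}e^{-(2-\sqrt{n}\bar{X}_n)/\kappa}$, to get
$$
\E_{\F_n}\!\left[\bar{G}(2-\sqrt{n}\bar{X}_n)\right]\ge \tfrac{1}{2}e^{-2/\kappa}\,e^{\sqrt{n}\mu/\kappa}\,\E_{\F_n}\!\left[e^{\sqrt{n}(\bar{X}_n-\mu)/\kappa}\,\mathbf{1}\{\sqrt{n}\bar{X}_n\le 2\}\right].
$$
The untruncated moment $\E_{\F_n}[e^{\sqrt{n}(\bar{X}_n-\mu)/\kappa}]=(\E_{\F_n}[e^{(X_{1,n}-\mu)/(\kappa\sqrt{n})}])^{n}$ converges to the positive constant $e^{1/(2\kappa^2)}$ by the same Taylor argument (using that the variance equals $1$), while the discarded piece $\E_{\F_n}[e^{\sqrt{n}(\bar{X}_n-\mu)/\kappa}\mathbf{1}\{\bar{X}_n>2/\sqrt{n}\}]$ tends to $0$: Cauchy--Schwarz bounds it by $(\E_{\F_n}[e^{2\sqrt{n}(\bar{X}_n-\mu)/\kappa}])^{1/2}\,\P_{\F_n}(\bar{X}_n-\mu>2/\sqrt{n}-\mu)^{1/2}$, where the first factor stays bounded and the probability decays (Chernoff with the moment generating function hypothesis, since $2/\sqrt{n}-\mu\to-\mu>0$). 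Hence $\E_{\F_n}[\bar{G}(2-\sqrt{n}\bar{X}_n)]\ge c_1 e^{-2/\kappa}e^{\sqrt{n}\mu/\kappa}$ for all large $n$, with $c_1>0$ not depending on $\F_n\in M_n$.

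Combining, $\|\ell_{\F_n}\|_{L^p(\F_n)}^p\le e^{p^2 C''/(2\kappa^2)}/c_1^p$ for all large $n$, so \eqref{eq:likelihood:Lp} holds, say with $p=q=2$. Since $\E_{\F_n}[(\bar{X}_n-\mu)^2]=1/n\to 0$ uniformly over $M_n$ (Chebyshev), $\bar{X}_n$ is uniformly consistent in probability for $\mu$ under $(M_n)$, and the ``further'' clause of Lemma~\ref{lem:transfer} then gives $\bar{X}_n\overset{p}{\to}\mu$ under the selective distribution, which is the assertion. I expect the denominator lower bound to be the main obstacle: one must simultaneously justify the pointwise inequality $\bar{G}(t)\ge\tfrac{1}{2}e^{-t/\kappa}$ by passing to $\{\sqrt{n}\bar{X}_n\le 2\}$ and show, through a large-deviations estimate, that removing its complement is asymptotically negligible, so that the $e^{\sqrt{n}\mu/\kappa}$ factors in numerator and denominator cancel cleanly.
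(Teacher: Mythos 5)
Your proposal is correct, and at the top level it is the same argument as the paper's: both reduce the claim to Lemma \ref{lem:transfer} (unselective consistency of $\bar{X}_n$ plus a uniform $L^p$ bound on $\ell_{\F_n}$), and both rest on the cancellation of the deterministic exponential factor $e^{\sqrt{n}\mu\cdot\mathrm{rate}}$ between numerator and denominator of the likelihood ratio. Where you differ is in how the moment bound \eqref{eq:likelihood:Lp} is established. The paper does not compute $\E_{\F_n}[\ell_{\F_n}^p]$ directly; it first proves the pointwise bound $\bL_{\F_n}(Z)\leq C_1 e^{\kappa|Z|}$ (the $k=0$ case of Lemma \ref{lem:logistic:exp}), handling the denominator with the two-sided logistic bound $\bar{G}(\kappa w)\geq\tfrac{1}{2}e^{-\kappa|w|}$ (valid for \emph{all} $w$, so no truncation is needed), the triangle inequality $|2-Z-\sqrt{n}\mu|\le|2-Z|+\sqrt{n}|\mu|$, Jensen's inequality for $x\mapsto x^{-1}$, and the MGF Lemma \ref{lem:mgf}; the $L^2$ bound then follows by one more application of Lemma \ref{lem:mgf}. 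You instead bound the ratio $\E[\bar{G}^p]/(\E[\bar{G}])^p$ head-on, and for the denominator you restrict to $\{\sqrt{n}\bar{X}_n\le 2\}$ where the one-sided bound $\bar{G}(t)\ge\tfrac12 e^{-t/\kappa}$ applies, then show the discarded piece is negligible by Cauchy--Schwarz plus a Chernoff estimate. Both routes are valid; yours is more self-contained and elementary but pays for the truncation with the extra large-deviations step, while the paper's pointwise bound is slightly slicker here and, more importantly, is reused verbatim for condition \eqref{eq:derivatives} in the selective CLT (Lemma \ref{lem:file:drawer}). One small remark: in your denominator you do not actually need the untruncated moment to converge to $e^{1/(2\kappa^2)}$ (which, uniformly over a triangular array, would require a touch more than your Taylor bound); Jensen's inequality already gives $\E_{\F_n}[e^{\sqrt{n}(\bar{X}_n-\mu)/\kappa}]\ge 1$, and together with your vanishing discarded piece this yields the positive lower bound $c_1$ you need.
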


Before we prove the lemma, we want to point out that although the selection procedure in
Example \ref{example2} is different from that in Example \ref{example1} because of randomization,
$\sqrt{n}\mu_n$ is still the dominant term in selection. Note that
$$
\sqrt{n}\bar{X}_n + \omega = \sqrt{n}\mu_n + \sqrt{n}(\bar{X}_n -\mu_n) + \omega.
$$
Since both $\sqrt{n}(\bar{X}_n -\mu_n)$ and $\omega$ are $O_p(1)$ random variables, the
dominant term $\sqrt{n}\mu_n \rightarrow -\infty$, would ensure that the selection event
has vanishing probabilities in Example \ref{example2} as well. Thus it is particularly
impressive that Example \ref{example2} gives consistent estimation where Example \ref{example1}
cannot. The proof of Lemma \ref{lem:simple:consistent} is deferred to the appendix.

We also verified this theory of consistent estimation through simulations. Figure \ref{fig:consistency}
shows the empirical distributions of the sample mean $\bar{X}_n$ after the file drawer effect in
Example \ref{example1} or the ``randomized'' file drawer effect in Example \ref{example2}. 
They are marked with ``blue'' colors or ``red'' colors respectively. We set the
true underlying mean to be $\mu_n = \mu = -1$ and mark it with the dotted vertical line in Figure
\ref{fig:consistency}. The upper panel Figure \ref{fig:100} is simulated with $n=100$ and the
lower panel Figure \ref{fig:250} is simulated with $n=250$. We notice that in both simulations,
the sample mean in Example \ref{example1} concentrates around the thresholding boundary, which
is positive. Thus, these sample means can not be possibly for the underlying mean $\mu = -1$. 
However, the existence of randomization allows us to report negative sample means. As a result,
the sample mean in Example \ref{example2} will be consistent for $\mu = -1$. We see that as
we increase sample size $n$, the sample means concentrates closer to $\mu=-1$. 

\begin{figure}
    \centering
    \begin{subfigure}[b]{\textwidth}
        \includegraphics[width=\textwidth]{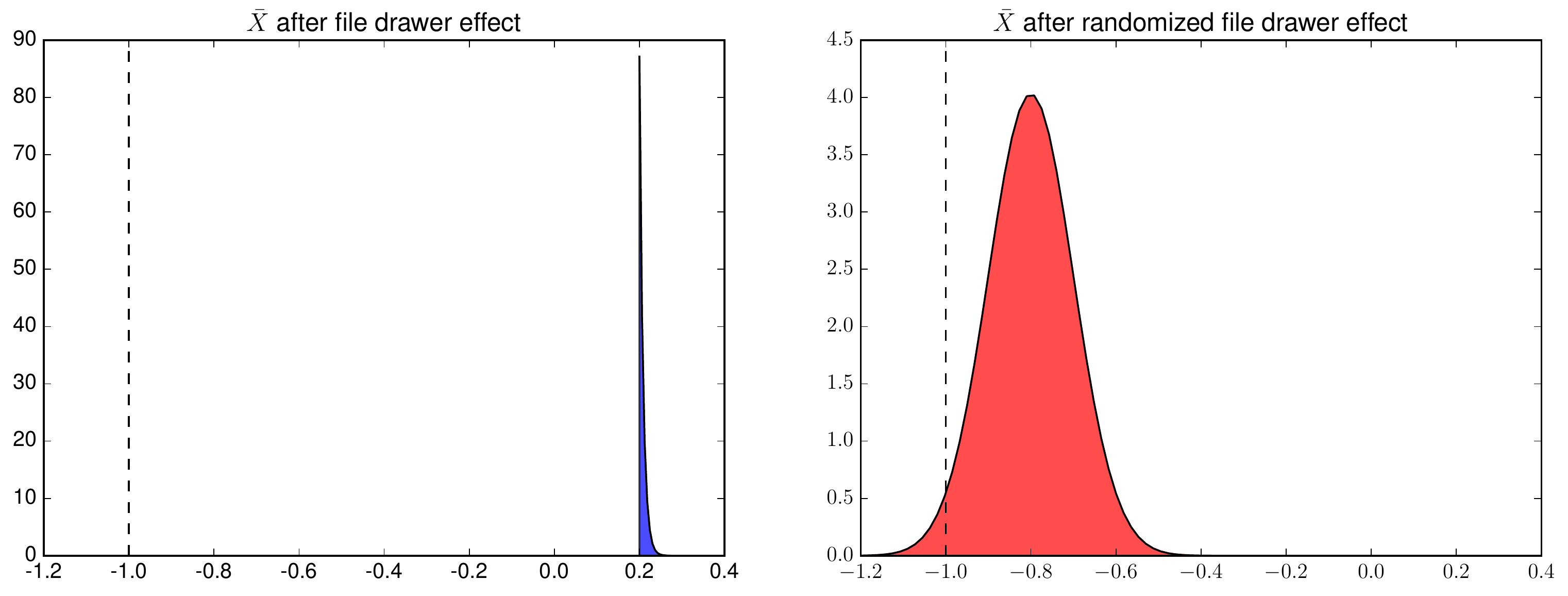}
        \caption{$n = 100$} \label{fig:100}
    \end{subfigure}
    ~ 
    \begin{subfigure}[b]{\textwidth}
        \includegraphics[width=\textwidth]{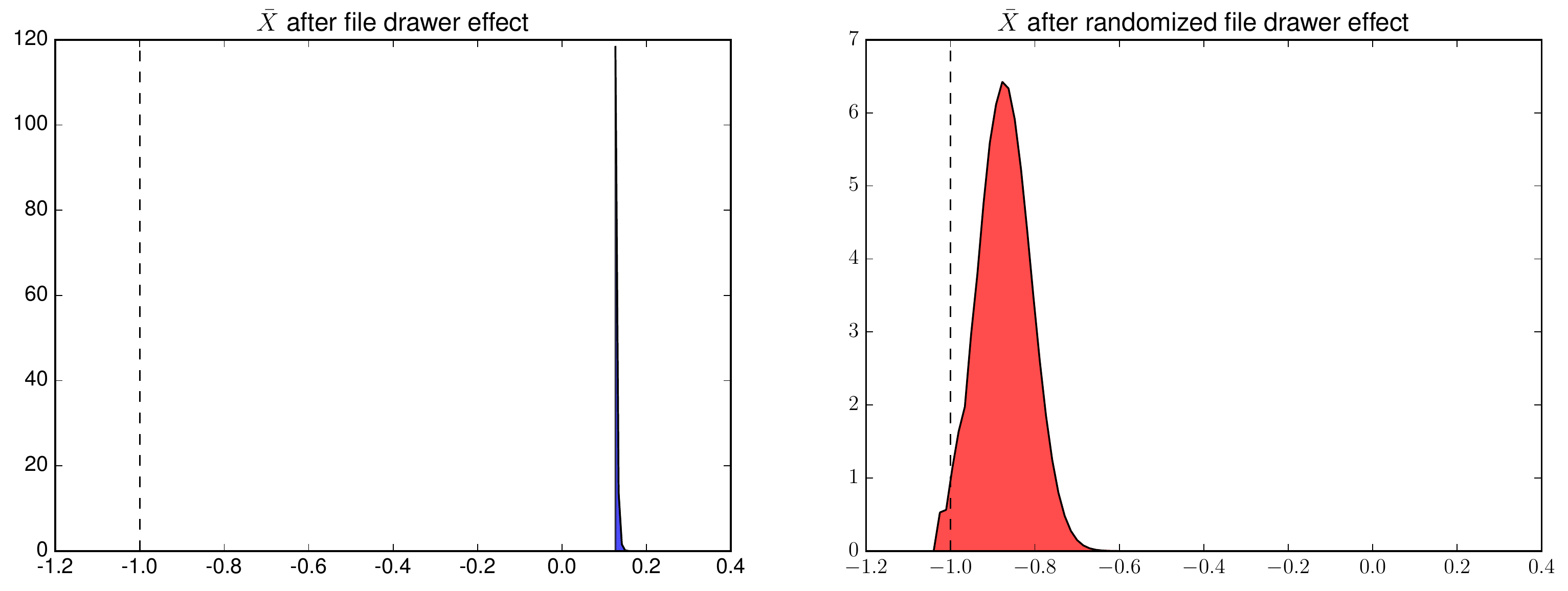}
        \caption{$n = 250$} \label{fig:250}
    \end{subfigure}
    \caption{Empirical distributions of sample means $\bar{X}_n$ in Example \ref{example1} and Example \ref{example2},
    with original or randomized file drawer effect. For the randomization, we draw $\omega \sim \Logistic(\kappa)$,
    with $\kappa = 0.5$.}\label{fig:consistency}
\end{figure}

\section{Inference in linear regression models}
\label{sec:exact_inference}

In the linear regression setting, we assume a fixed feature matrix $X \in \real^{n \times p}$, and observe
the response vector $D = y \in \real^n$. We assume the noises are normally distributed. 
There are two ways to parametrize a linear model, and both belong to some exponential family. Now we introduce
the {\em selected model},
\begin{equation}
\label{eq:selected}
M_{sel}(E) = \left\{N(X_E\beta_E, \sigma^2 I): \beta_E \in \real^{|E|}\right\}, \ \  E \subset \{1, \dots, p\}
\end{equation}
with $\sigma^2$ known or unknown or the {\em saturated model},
\begin{equation}
\label{eq:saturated}
M_{sat} = \left\{N(\mu, \sigma^2 I): \mu \in \real^n \right\}
\end{equation}
with known variance. Now we consider some randomized selection procedures and inference after selection.
 
\subsection{Data splitting and data carving}

In the introduction, we introduced {\em data splitting} \cite{data_splitting} as a special case
of randomized selective inference. 
In \cite{optimal_inference}, the term {\em data carving} was introduced to demonstrate that 
data splitting is inadmissible. 
In data splitting (and data carving) inference
makes most sense in the selected model $M_{sel}(E)$,
hence we should think of $\mQ$ as returning a subset $E$ of variables selected.

Let us formalize this notion
in our notation. Let $\Q$ be some measure on assignments of $n$ data points into groups and $\mQ$ a selection
algorithm defined on datasets of any size. The distribution $\Q$ determines
a randomized selective inference procedure with selection algorithm $\mQ^*$, an 
algorithm applied to subsets of the original data set. In this case, it is easy to see that
$$
\modelweight(E;y) \overset{D}{=} \modelweight(M_{sel}(E);y) \propto \sum_{\omega} q_{\omega} \cdot 1_{\{M_{sel}(E) \in \mQ(y_1(y,\omega))\}}
$$
where $q_{\omega}$ is the mass assigned to assignment $\omega$ by $\Q$. Multiple assignments or splits
considered in \cite{meinshausen_pvals,stability_selection} can be formalized in a similar fashion. 
We can construct UMPU tests for $\beta_E$ in the selected model 
$M_{sel}(E)$ by using Lemma \ref{lem:exponential:family}, (also see \cite{optimal_inference}).
We note that in \cite{optimal_inference} the authors conditioned unnecessarily 
on the split $\omega$, and we would expect that aggregating over splits would yield a more powerful procedure.

However, there are two disadvantages with this randomization scheme. First, it is computationally difficult 
to aggregate over all random splits. Second, it seems difficult to consider the saturated model $M_{sat}$ for
inference, which is more robust to model misspecifications. To overcome those difficulties, we introduce other
randomization schemes below.

\subsection{Additive noise and more powerful tests}
\label{sec:regression:additive}

Our second randomization scheme in linear regression involves additive noise.
Specifically, we draw $\omega \sim \Q$ and use the randomized response $y^*(y,\omega) = y+\omega$ for selection 
In this case, we can consider both the selected model $M_{sel, E}$ and the saturated model $\M_{sat}$. 
Per Lemma \ref{lem:exponential:family}, we can perform valid inference
for $\beta_E$ in $M_{sel,E}$ or linear functionals of $\mu$ in $M_{sat}$.

One major advantage of using a randomized response $y^*$ for selective inference is
that these procedures yield much more powerful tests, at a small cost of on the quality of the
selected models. In other words, small amount of randomization is cause a small loss in
the model selection stage, but we gain much more power in the inference stage. 

The reason for increased power can be explained by a notion called {\em leftover Fisher Information}
first introduced in \cite{optimal_inference}. Since selective inference is essentially inference
under the selective distribution $\F_n^*$, the Fisher Information under $\F_n^*$ would determine
how efficient the selective tests are. In the saturated model with Gaussian noise $M_{sat}$, 
$\frac{y-\mu}{\sigma^2}$ is the score statistic and its variance under $\F_n^*$ is exactly
the leftover Fisher Information (a similar relationship holds in the selected model $M_{sel,E}$). 
Lemma \ref{lem:Fisher} gives a lower bound on this leftover Fisher Information when
the randomization noise $\Q = N(0, \gamma^2 I)$.

\begin{lemma}
    \label{lem:Fisher}
For either $M_{sat}$ or $M_{sel}(E)$, if we use Gaussian randomization noise $\Q = N(0, \gamma^2)$,
and the selection is based on $\mQ(y^*) = \mQ(y+\omega)$,
then the leftover Fisher information is bounded below by
$$
(1 - \tau){\cal I}(\theta), \quad  \tau=\sigma^2 / (\sigma^2 + \gamma^2),
$$
and ${\cal I}(\theta)$ is the non-selective Fisher information
for $\theta$ in $M_{sat}$ or $M_{sel}(E)$. 
The parameters $\theta$ depend on which of the two models we are considering.
\end{lemma}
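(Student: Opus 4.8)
The plan is to use the Gaussian structure of the randomization to split the response $y$ into an ``inference part'' and a ``selection part'' that decouple after conditioning on $y^{*}=y+\omega$, and then lower bound the leftover Fisher information by a law-of-total-covariance argument.

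First I would write the randomized response as $y^{*}=y+\omega$ with $\omega\sim N(0,\gamma^{2}I)$ drawn independently of $y$, and let $\mu$ denote the mean of $y$ (so $\mu\in\real^{n}$ in $M_{sat}$ and $\mu=X_{E}\beta_{E}$ in $M_{sel}(E)$). Then $(y,y^{*})$ is jointly Gaussian with $\Cov(y)=\sigma^{2}I$, $\Cov(y^{*})=(\sigma^{2}+\gamma^{2})I$ and $\Cov(y,y^{*})=\sigma^{2}I$, so the Gaussian conditioning formula gives
$$
y\mid y^{*}\;\sim\;N\!\big((1-\tau)\mu+\tau y^{*},\ \sigma^{2}(1-\tau)I\big),\qquad \tau=\sigma^{2}/(\sigma^{2}+\gamma^{2}).
$$
The key observation is that the selection event $\{M\in\mQ^{*}(y,\omega)\}=\{M\in\mQ(y^{*})\}$ is $\sigma(y^{*})$-measurable, so conditioning on it changes only the marginal law of $y^{*}$ (to a Gaussian tilted/truncated to the selection region, still with finite second moment) and leaves the conditional law of $y$ given $y^{*}$ intact: under the selective distribution $\F^{*}$ we still have $\law_{\F^{*}}(y\mid y^{*})=N\big((1-\tau)\mu+\tau y^{*},\ \sigma^{2}(1-\tau)I\big)$.

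Next I would recall, as noted just before the lemma, that the leftover Fisher information in $M_{sat}$ is exactly $\Var_{\F^{*}}$ of the score $(y-\mu)/\sigma^{2}$, i.e. $\sigma^{-4}\Cov_{\F^{*}}(y)$, and similarly it is $\sigma^{-4}X_{E}^{T}\Cov_{\F^{*}}(y)X_{E}$ in $M_{sel}(E)$; by Lemma \ref{lem:exponential:family} the selective weight $\modelweight(M;y)$ carries no dependence on $\mu$ (resp.\ $\beta_{E}$), so it does not enter the information computation. Applying the law of total covariance under $\F^{*}$ and using that $\E[y\mid y^{*}]$ is affine in $y^{*}$,
$$
\Cov_{\F^{*}}(y)=\E_{\F^{*}}\!\big[\Cov(y\mid y^{*})\big]+\Cov_{\F^{*}}\!\big(\E[y\mid y^{*}]\big)=\sigma^{2}(1-\tau)I+\tau^{2}\Cov_{\F^{*}}(y^{*})\ \succeq\ \sigma^{2}(1-\tau)I,
$$
since $\Cov_{\F^{*}}(y^{*})\succeq 0$. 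Multiplying this Loewner inequality by $\sigma^{-2}$ (respectively by $\sigma^{-1}X_{E}^{T}$ on the left and $\sigma^{-1}X_{E}$ on the right) yields leftover Fisher information $\succeq (1-\tau){\cal I}(\theta)$, with ${\cal I}(\mu)=\sigma^{-2}I$ and ${\cal I}(\beta_{E})=\sigma^{-2}X_{E}^{T}X_{E}$ the non-selective informations.

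The step I expect to need the most care is the claim that conditioning on the selection event preserves $\law(y\mid y^{*})$: this holds because $\{M\in\mQ(y^{*})\}\in\sigma(y^{*})$, so on that event $\sigma\big(y^{*},\mathbf{1}\{M\in\mQ(y^{*})\}\big)=\sigma(y^{*})$ and the disintegration of $\F$ along $y^{*}$ is inherited by $\F^{*}$ with only the $y^{*}$-marginal reweighted. It is worth writing this out, since it is the conceptual heart of the argument; everything afterwards is the elementary Gaussian covariance algebra above, together with the remark that, as $\theta$ is vector-valued in both models, the conclusion should be read as a positive-semidefinite ordering of Fisher information matrices.
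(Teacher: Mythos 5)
Your proposal is correct and follows essentially the same route as the paper: both identify the leftover Fisher information with the selective variance of the score, exploit that the selection event is $\sigma(y^{*})$-measurable so that only the $y^{*}$-marginal is reweighted, and then apply the Gaussian conditional-covariance formula together with a law-of-total-covariance (equivalently, $\Var(V\mid \mQ(y^{*}))\succeq \Var(V\mid y^{*})$) argument, discarding the positive-semidefinite term. Your write-up simply makes explicit the disintegration step that the paper compresses into ``since $\mQ(y^{*})$ is measurable with respect to $y^{*}$,'' so no gap remains.
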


\begin{proof}
In the saturated model $M_{sat}$, the score statistic is
$
V = \frac{y - \mu}{\sigma^2}.
$
Since $\mQ(y^*)$ is measurable with respect to $y^*$,
$$
\Vararg{V \mid \mQ(y^*)} \geq \Vararg{V \mid y^*} = \frac{1}{\sigma^4}\Vararg{y \mid y^*}.
$$
Since $y$ and $y^* = y + \omega$ are both normal distributions with covariance matrices,
$$
\Covarg{y, y^*} = \sigma^2 I, \Vararg{y^*} = (\gamma^2 + \sigma^2) I, 
$$
we have the leftover Fisher Information
$$
\begin{aligned}
&\Vararg{V \mid \mQ(y^*)} 
 \geq \frac{1}{\sigma^4}\Vararg{y \mid y^*} \\
= &\frac{1}{\sigma^4} (\sigma^2 I - \frac{\sigma^4}{\gamma^2 + \sigma^2} I)
=  \frac{1}{\sigma^2} (1 - \tau) I = (1 - \tau) {\cal I}(\mu). 
\end{aligned}
$$

In the selected model $M_{sel, E}$, the score statistic is
$
V = \frac{X_E^T(y - X_E \beta_E)}{\sigma^2}.
$
Similarly,
$$
\begin{aligned}
&\Vararg{V \mid \mQ(y^*)} 
\geq \frac{1}{\sigma^4}\Vararg{X_E^T y \mid y^*} \\
=& \frac{1}{\sigma^4} \left[\sigma^2(X_E^T X_E) - \frac{\sigma^4}{\gamma^2 + \sigma^2} (X_E^T X_E)\right]
= \frac{1}{\sigma^2} (1 - \tau) X_E^T X_E = (1 - \tau) {\cal I}(\beta_E). 
\end{aligned}
$$
\end{proof}

When there is no randomization $\gamma=0$, we potentially have no leftover Fisher information. This corresponds
to a very rare selection event. However after randomization, even with very extreme selection, there is always
leftover Fisher information, which makes the selective tests more powerful. Consider the following examples.

\subsubsection{Revisit the ``file drawer problem''}

In Example \ref{example1} and Example \ref{example2}, if we assume $\F_n = N(\mu, 1)$, they
are a special case of the linear regression model, with the feature matrix $X = \mathbf{1}$,
the all ones vector.

In this case, $n\bar{X}_n$ is the score statistic, and
its variance under the selective distribution
is the Fisher information. Lemma \ref{lem:Fisher}
states that the leftover Fisher information is lower bounded by $n(1-\tau)$ if we draw
randomize using Gaussian variables, $\Q = N(0, \gamma^2)$, $\tau = 1/(1+\gamma^2)$.

Moreover, the increase in leftover Fisher information with randomization is not
specific to Gaussian randomizations. For example, in Figure \ref{fig:consistency}
when we use Logistic randomization,
we also observe that under the selective distribution with randomization, $\bar{X}_n$ has a
much bigger variance than without randomization. As discussed above, this variance multiplied by $n^2$
is exactly the leftover Fisher information, which explains why selective procedures
after randomization will have better performances than without. 

We investigate the relationship between the leftover Fisher information and the
length of confidence intervals constructed by inverting the pivot in
\eqref{eq:pivot:uni:rand}. Specifically, in Example \ref{example2}, after
observing a reported sample mean, we want to report confidence intervals for
the underlying mean $\mu$.

Figure \ref{fig:CIs} demonstrates the selective intervals (solid lines) after 
\eqref{eq:file:drawer:rand} with $\omega$ being either Gaussian or Logistic noises. 
The sample size $n=100$. Unlike the nominal
confidence intervals (dashed lines), the selective intervals are valid with $90\%$
coverage for the underlying mean.  
Since Lemma \ref{lem:transfer} gives a lower bound of $(1 - \tau) {\cal I}(\mu)$,
we would intuitively expect the selective confidence intervals to be $1/(1-\tau)$
the length of the nominal intervals. This is verified in Figure \ref{fig:gaussian},
when we observe really negative sample means. (The sample means can be negative because we added randomization.)
On the other hand, for Logistic randomization in Figure \ref{fig:logistic},
the intervals are slightly wider than the nominal intervals around the $2/\sqrt{n}$, but narrow to roughly the nominal size on both sides of the truncation point.
This indicates that added logistic noise might preserve more information than Gaussian additive noise.
Both additive noises improve significantly over a non-randomization scheme (c.f. Figure
3 in \cite{optimal_inference}).

\begin{figure} 
    \centering
    \begin{subfigure}[t]{0.45\textwidth}
                \includegraphics[width=\textwidth]{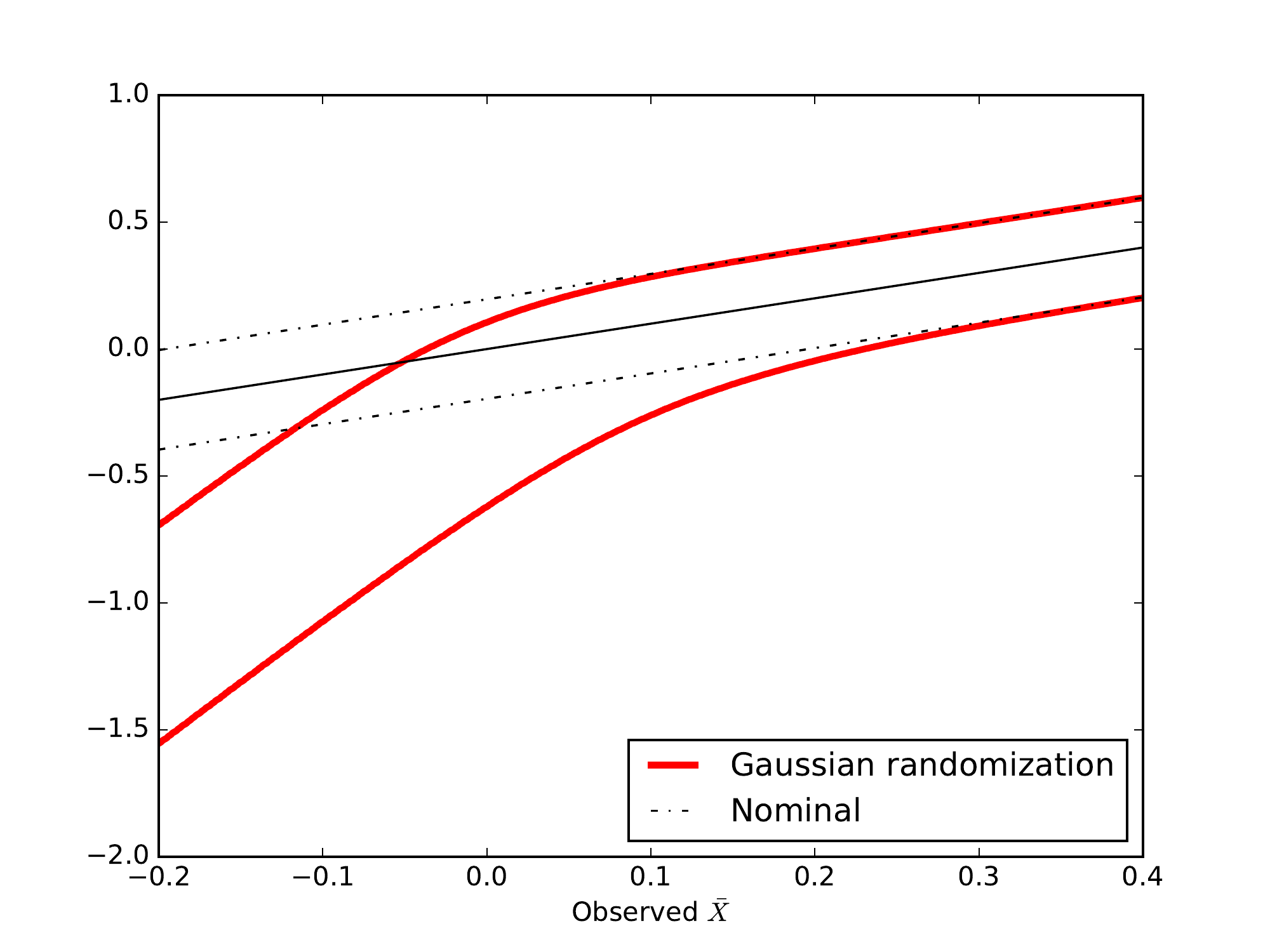}
                \caption{Gaussian added noise}
                \label{fig:gaussian}
        \end{subfigure}%
        ~ 
        \begin{subfigure}[t]{0.45\textwidth}
                \includegraphics[width=\textwidth]{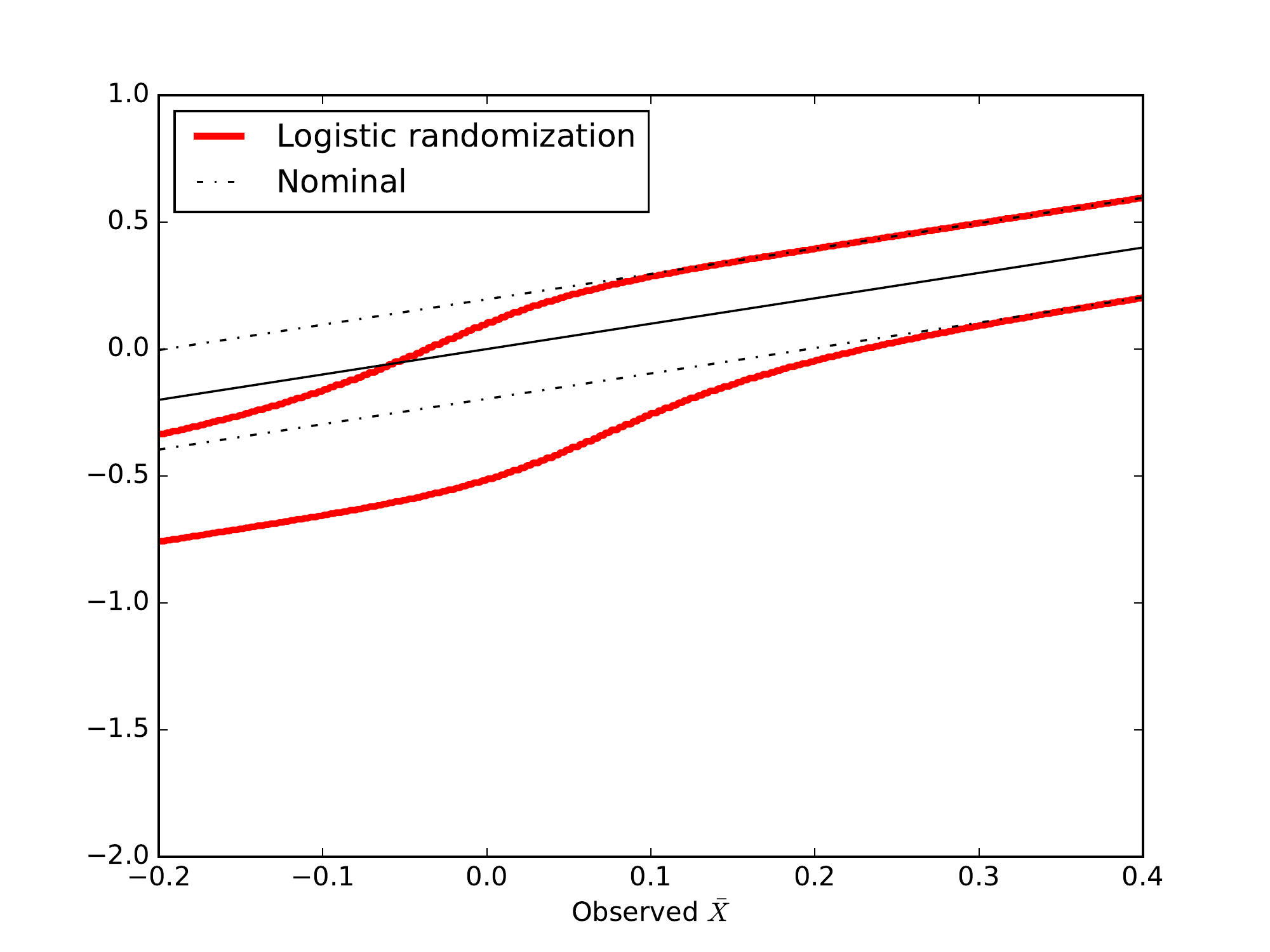}
                \caption{logistic added noise}
                \label{fig:logistic}
        \end{subfigure}
        \caption{Selective confidence intervals for different added noise}\label{fig:CIs}
\end{figure}

Of course, the increase in power and shortening of selective confidence intervals does not
come without a price. Because we select with a randomized response, we are likely to
select a worse model. But the trade-off between model quality and power is highly in
favor of randomization. See the following example.

\subsubsection{Linear regression with added noise}
\label{sec:linear:additive}

Back to the general setup of linear regression models, we select a model by
solving LASSO with the randomized response $y^* = y + \omega$ and return the
active set $E$ of the solution (as in \eqref{eq:lasso:rand}). Then per Lemma \ref{lem:exponential:family},
we can construct valid selective tests in both $M_{sat}$ and $M_{sel}(E)$. For
instance, in $M_{sel}(E)$, we can construct
tests for the hypothesis $H_{0j}: \beta_j = 0, ~ j \in E$ based on the law,
\begin{equation}
\label{eq:poly:inference:law}
\eta^T y  \bigl \vert A_E(y + \omega) \leq b_E, P_{E \backslash j}y, \quad (y, \omega) \sim N(X_E\beta_E,\sigma^2I) \times \Q,~\beta_j = 0,
\end{equation}
where $\eta = (X_E^{\dagger})^T e_j$, $e_j$ is the $j$-th column of the identity matrix, 
$P_{E \backslash j}$ is the projection matrix onto the column space of $E$ but orthogonal to $\eta$,
and $A_E,~b_E$ are the appropriate matrix and vector corresponding to LASSO selection. 
This is a UMPU test due to the Lehmann--Scheffe construction \citep{optimal_inference}
and controls the selective Type-I error \eqref{eq:random:typeIerror}. Although, we cannot
compute the explicit forms of \eqref{eq:poly:inference:law}, the selection events in
\eqref{eq:poly:inference:law} are polyhedrons and thus a hit-and-run or Hamiltonian Monte Carlo algorithm \cite{paninski} 
can be used for sampling.

Figure \ref{fig:ROC} compares inference in the additive Gaussian noise scheme
to the data carving procedure proposed in \cite{optimal_inference} as well as
data splitting.  In $M_{sel}(E)$, the probability of screening (i.e. selecting
$E$ including all the nonzero $\beta$'s) is a surrogate for the quality of the
model. As additive noise uses a different randomization scheme than data splitting
and data carving, we vary the amount of randomization used in each scheme and match
on the probability of screening. Thus Figure \ref{fig:ROC} is like an ROC curve for
the trade-off between model quality and power of tests. The $x$-axis goes in the
direction of increased randomization, with the left most point corresponding to
no randomization at all. We see even with a small randomization
that barely affects model selection, we can substantially lower the Type-II error
from $0.2$ to less than $0.05$. The trade-off is highly in favor of (small) randomization.
We see in Figure \ref{fig:ROC} that additive noise lowers the Type-II error by almost half than data carving
for the same screening probability and they both clearly dominate data splitting. 
For the concrete setup of the simulation, see Chapter 7 of \cite{optimal_inference}. 

\begin{figure}
\begin{center}
\includegraphics[width=.55\textwidth]{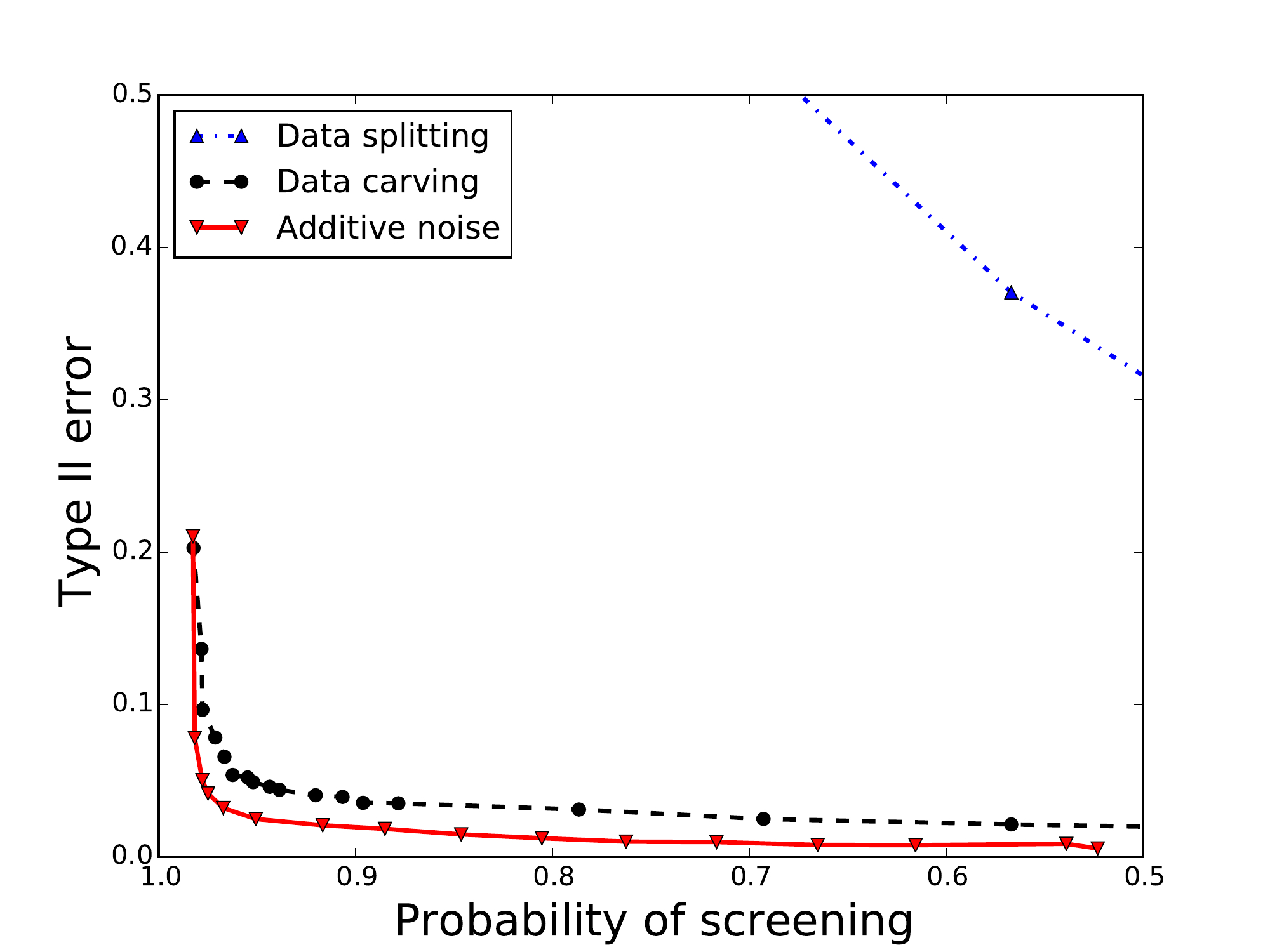}
\end{center}
\caption{Comparison of inference in additive noise randomization vs. data carving.}
\label{fig:ROC}
\end{figure}

\section{Weak convergence and selective inference for statistical functionals}
\label{sec:CLT}

\newcommand{\stat}{T}
\newcommand{\nuisance}{N}
\newcommand{\normalize}{{\cal N}}

In the nonparametric setting, we assume a triangular array of data, $D_n = (d_{1,n},\dots, d_{n,n})$,
and $d_{i,n} \iid \F_n$. When $\F_n = \F$, it is the special case of independent sampling.
We are interested in some functional of the distribution $\mu_n = \mu(\F_n)$. Associated
with $\mu_n$ is our statistic $\stat$ which is a {\em linearizable statistic} \citep{linearizable}. 
\begin{definition}[Linearizable statistic]
    \label{def:decomposable}
    Suppose $d_{i,n} \iid \F_n$, we call $T$ a linearizable statistic for $\mu_n=\mu(\F_n)$ if for any sample size $n$,
    \begin{equation}
    \label{eq:decomposable}
    \begin{aligned}
    &T(D_n) = \frac{1}{n} \sum_{i=1}^n \xi_{i,n} + R, \quad \xi_{i,n} = \xi(d_{i,n}), \\
    &\Earg{\xi_{i,n}} = \mu_n \in \real^p,  \quad \Covarg{\xi_{i,n}} = \Sigma_n \in \real^{p\times p}.
    \end{aligned}
    \end{equation}
    where $\xi$ a function of the data and $R$ is bounded with probability $1$, $R = o_p(n^{-\frac{1}{2}})$ under $\F$.
    We use the slight abuse of notations to denote $\xi_{i,n}$ as $\mrm{iid}$ random variables as well. 
\end{definition}

Throughout this section, we assume the dimension $p$ is fixed.  We are
interested in establishing a pivotal quantity for $T_n = T(D_n)$ like
\eqref{eq:pivot:uni:rand} in Example \ref{example2} where $T_n$ is the sample
mean after the randomized ``file drawer effect''.  It turns out we have an
exact pivotal quantity if $T_n$ is normally distributed. To lighten notation,
we suppress the script $n$ in the following lemma, which is a finite sample
result valid for any $n$.  We prove the lemma in Section \ref{sec:proof}.

\begin{lemma}
    \label{lem:exact}
    If the statistic $T$ is normally distributed from $N(\mu, \frac{\Sigma}{n})$
    and the model $M$ is selected by randomized selection $\mQ^*(T, \omega)$, where $\omega \sim \Q$.
    Then for any contrast $\eta$, which could depend on the outcome of selection $\mQ^*$, we have 
    \begin{equation}
        \label{eq:pivot}
        P(\stat; \eta^T\mu,\Sigma) = \frac{\int_{\eta^T\stat}^{\infty} \Q(t; V_{\eta}) \cdot \exp\left( -n(t-\eta^T\mu)^2 / 2\sigma_{\eta}^2\right)\; dt}{\int_{-\infty}^{\infty} \Q(t; V_{\eta}) \cdot \exp( -n(t-\eta^T\mu)^2 / 2\sigma_{\eta}^2)\; dt} \overset{\F^*}{\sim} \Unif(0,1)
    \end{equation}
where 
    $$
    \begin{aligned}
        \sigma_{\eta}^2 &= \eta^T \Sigma \eta, \quad V_{\eta} = \left(I - \frac{1}{\sigma^2_{\eta}} \Sigma \eta\eta^T\right) T \\
        \Q(t, V_{\eta}) &= \Q\left(\left\{\omega : M \in \mQ^*\bigg(t\cdot \Sigma\eta/\sigma_{\eta}^2 + V_{\eta}, \omega\bigg)\right\}\right).
    \end{aligned}
    $$
\end{lemma}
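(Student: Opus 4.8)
The plan is to exploit the fact that, for a jointly Gaussian pair, conditioning the score direction $\eta^T T$ on its orthogonal complement $V_\eta$ reduces the selective problem to a one-dimensional truncated-normal problem, and that the truncation is governed exactly by the randomization kernel $\Q(\cdot; V_\eta)$. First I would decompose $T = \frac{\Sigma\eta}{\sigma_\eta^2}\,(\eta^T T) + V_\eta$, which is the usual projection of $T$ onto $\eta$ and its complement; because $T\sim N(\mu,\Sigma/n)$, the scalar $U := \eta^T T$ and the vector $V_\eta$ are independent Gaussians, with $U \sim N(\eta^T\mu,\sigma_\eta^2/n)$. The key observation is that $V_\eta$ is a function of $T$ alone (not of $\omega$), so conditioning on $V_\eta$ is legitimate within the selective framework, and since $\eta$ is allowed to depend on the selection outcome, we treat $\eta$ as fixed throughout after conditioning on $\mQ^*$.

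Next I would write out the selective likelihood ratio from \eqref{eq:selective_dbn} in these coordinates. The model-selection weight is $\modelweight(M;T) = \Q(\{\omega : M\in\mQ^*(T,\omega)\})$; substituting the decomposition $T = t\cdot\Sigma\eta/\sigma_\eta^2 + V_\eta$ with $t = \eta^T T$ gives exactly $\modelweight(M;T) = \Q(t;V_\eta)$ as defined in the lemma. Therefore, conditionally on $V_\eta$ and on the event $\{M\in\mQ^*\}$, the density of $U = \eta^T T$ with respect to Lebesgue measure is proportional to $\Q(t;V_\eta)\cdot\exp(-n(t-\eta^T\mu)^2/2\sigma_\eta^2)$ — the Gaussian density of $U$ times the selection weight, with the $V_\eta$-marginal cancelling between numerator and the (now $V_\eta$-dependent) normalizing constant. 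This is the conditional selective law of $U$.

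The pivot $P(T;\eta^T\mu,\Sigma)$ in \eqref{eq:pivot} is then precisely the survival function of this conditional density evaluated at the observed $U = \eta^T T$: the numerator integrates the conditional (unnormalized) density from $\eta^T T$ to $\infty$, and the denominator is the normalizing constant. By the standard probability-integral-transform argument — any continuous random variable's own survival function is $\Unif(0,1)$ — we conclude $P(T;\eta^T\mu,\Sigma) \mid V_\eta \overset{\F^*}{\sim}\Unif(0,1)$. Since this holds conditionally on $V_\eta$ for every value of $V_\eta$, it holds unconditionally under $\F^*$, which is the claim. I should note that continuity of the conditional law is what makes the transform exact; this follows from the Gaussian factor, provided $\Q(\cdot;V_\eta)$ is not a.s. zero on the relevant range, i.e. the selection event has positive probability, which is implicit in conditioning on it.

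The main obstacle, and the step deserving the most care, is the bookkeeping around conditioning on a selection-dependent contrast $\eta$: one must check that the factorization of the selective density is valid \emph{after} we condition on the realized selection event (so that $\eta$ and $V_\eta$ are both fixed), and that the normalizing constant in \eqref{eq:selective_dbn}, which nominally involves an expectation over all of $T$, correctly reduces to the one-dimensional integral $\int_{-\infty}^\infty \Q(t;V_\eta)\exp(-n(t-\eta^T\mu)^2/2\sigma_\eta^2)\,dt$ once we further condition on $V_\eta$. Everything else is the routine Gaussian projection identity and the probability-integral transform.
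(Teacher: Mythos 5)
Your proposal is correct and follows essentially the same route as the paper's proof: decompose $T$ along $\eta$ and its complement, use the independence of $\eta^T T$ and $V_{\eta}$ under the Gaussian law to factor the selective density, identify the selection weight with $\Q(t;V_{\eta})$, and apply the probability integral transform to the conditional survival function, then note the uniformity holds for every value of $V_{\eta}$ and hence unconditionally. No gaps.
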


\begin{remark}
\label{rmk:nullstat}
In selected models $M_{sel,E}$, the selection is often made not only
based on $(T, \omega)$, but also other statistic of the data, which
we call the null statistic $N$. Thus the selection event should be
expressed as $\{M \in \mQ^*\big((T, N), \omega\big)\}$. To make notation simpler, we exclude
such possibilities. But a slightly modified pivot where we replace $\Q(t; V_{\eta})$
with $\Q(t;V_{\eta}, N)$ in \eqref{eq:pivot} and integrate over $N$, is still
$\Unif(0,1)$ distributed.
\end{remark}

Note that Lemma \ref{lem:exact} provides a valid pivotal quantity for any randomized selection procedure $\mQ^*$
and any randomization noise $\Q$ provided that $T$ is normally distributed. In fact, Lemma \ref{lem:exact} does not
require $T$ to be a linearizable statistic. In some sense, the lemma is a reformulation (after rescaling)
of the selective tests constructed in the linear regression model with additive noises (see Section \ref{sec:linear:additive}). 
For example, in the selected model $M_{sel, E}$, to test the hypothesis $H_{0j}: \beta_j = 0, ~ j \in E$, we consider the law
\eqref{eq:poly:inference:law}. After introducing the null statistic $N = P_E^{\perp} y$, 
the pivot in \eqref{eq:pivot} is in fact the CDF transform of this law, taking 
$
T = P_E y, ~ \Sigma = n\sigma^2 P_E, 
$
and the selection event $\{M \in \mQ^*((T,N), \omega)\}$ to be the affine selection event defined in \eqref{eq:poly:inference:law}. 
With simple calculation, it is easy to see $V_{\eta} = (P_E - \norm{\eta}^{-2} \eta^T\eta) y = P_{E\backslash \eta} y$,
which we condition on in both \eqref{eq:pivot} and \eqref{eq:poly:inference:law}.

Of course the pivot in \eqref{eq:pivot} is very difficult to compute explicitly, and we need to use sampling
schemes like in \eqref{eq:poly:inference:law}. But in a nutshell, $P(T; \eta^T\mu, \Sigma)$ is simply a CDF
transform of the law
\begin{equation}
\label{eq:pivot:law}
\eta^T T \mid V_{\eta}, ~M \in \mQ^*(T, \omega), \quad (T, \omega) \sim N\left(\mu, \frac{\Sigma}{n}\right)\times \Q.
\end{equation}
After introducing the null statistic, 
Lemma \ref{lem:exact} is agnostic to the selected model $M_{sel,E}$, where $\mu = X_E\beta_E$ or the
saturated model $M_{sat}$, where the parameter is simply $\mu$. The nuances between the two models
in terms of sampling is that the saturated model condition on $N$ (treating it as part of $V_{\eta}$), but
selected model integrate over $N$. 

Lemma \ref{lem:exact} is written with $T$ implicitly being the approximate average of $n$ i.i.d variables,
hence the distribution $N(\mu, \frac{\Sigma}{n})$.
Linearizable statistics are of particular interest as they converge to $N(\mu, \frac{\Sigma}{n})$ due to
central limit theorem. In the following, we seek to establish conditions under which the pivot
$P(T; \mu, \Sigma)$ will be asymptotically $\Unif(0,1)$.  

\subsection{Selective central limit theorem}

In other work on asymptotics of selective inference \cite{tian2015asymptotics,tibshirani2015uniform}, 
the setup considered is usually the saturated model $M_{sat}$.
These works considered asymptotics of selective inference marginalized over the range of 
$\mQ^*$. In contrast, we consider the convergence for any particular
selected model $M_n$, under the conditional law of the selection event
$\{M_n \in \mQ^*_n\}$. Specifically, we allow weak convergence of the pivot in \eqref{eq:pivot}
in the sequence of selected models $(M_n)_{n \geq 1}$. As explained above, selected models integrate over
the null statistics while saturated models condition on those, thus the selective
tests should have more power provided that the selected model is believable. 
In the saturated model, our result provides a
finer measure of convergence than in \cite{tian2015asymptotics}. On the
other hand, \cite{tian2015asymptotics} allows high-dimensional setting
in some cases while we consider fixed dimension $p$.

Similar to the asymptotic setting in Section \ref{sec:nonparametric_setup},
we consider the convergence of $P(T_n; \eta^T \mu_n, \Sigma_n)$ under a sequence
of models $(M_n)_{n \geq 1}$ selected by a sequence of selection procedures $(\mQ^*_n)_{n \geq 1}$.
$(T_n)_{n \geq 1}$ is a sequence of linearizable statistics defined in Definition \ref{def:decomposable},
with asymptotic mean $\mu_n$ and asymptotic covariance matrix $\frac{\Sigma_n}{n}$.

It turns out that in this setting, the selective likelihood ratio $\ell_{\F_n}$ again
plays an important role in the convergence of the pivot. Recall that with randomized selection
$\mQ^*(T_n, \omega)$, the selective likelihood is
\begin{equation}
\begin{aligned}
&\ell_{\F_n}(T_n; M_n) = \frac{\modelweight(T_n; M_n)}{\Esubarg{\F_n}{\modelweight(T_n; M_n)}}, \\
& \modelweight(T_n; M_n) = \Q\left(\left\{\omega: M_n \in \mQ_n^*(T_n, \omega) \right\}\right)
\end{aligned}
\end{equation}

It will be convenient to rewrite the likelihood ratio 
in terms of the normalized vector $Z_n=\sqrt{n}(T_n - \mu_n)$
\begin{equation}
\bar{\ell}_{\F_n}(Z_n) = \ell_{\F}(n^{-1/2} Z_n + \mu_n).
\end{equation}
as well as the pivot \eqref{eq:pivot}
\begin{equation}
\bar{P}_{\F_n}(Z_n) = P(n^{-1/2} Z_n + \mu_n; \eta_n^T\mu_n,\Sigma_n).
\end{equation}

Our approach is basically a comparison of how the pivot will behave
under $\F_n$ and its Gaussian counterpart $\Phi_n = N(\mu(\F_n),
\Sigma(\F_n))$.  Specifically, it is a modification of the proof of Theorem 1.1
of  \cite{chatterjee2005simple}, modified to allow for the fact the derivatives
of the pivot and the likelihood are not required to be
uniformly bounded. Given a norm $\Omega$ on $\real^p$, define 
\begin{equation}
\label{eq:sobolev}
\lambda^{\Omega}_r(f) = \sup_{s \in \real^{p}} \left\{ \big\|\partial^k f(s)\big\|^{r/k}\exp(-r\Omega(s)): 1 \leq k \leq r \right\},
\end{equation}
where $\partial^k$ denotes the k-fold differentiation with respect to the $p$-dimensional vector $s$, $\| \cdot \|$ denotes element wise maximum. 

Now we state our selective central limit theorem, which we prove in Section \ref{sec:proof}.
\begin{theorem}[Selective central limit theorem]
\label{thm:weak:conv}
Suppose the statistics $T_n = T(D_n)$ are linearizable statistics according to Definition \ref{def:decomposable}.
We also assume the norms $\Omega: \real^{p} \rightarrow \real$ are such that for each $f \in \{\bar{P}_n, \bar{\ell}_{\F_n}, \bar{\ell}_{\Phi_n}\}$,
it satisfies
\begin{equation}
    \label{eq:derivatives}
\sup_{\F_n \in M_n} \lambda^{\Omega}_3(f) \leq  C_1.
\end{equation}
Moreover, assume $\xi_{i, n}$ has uniformly bounded moment generating function in some neighbourhood of $0$.
Namely, $\exists a > 0$, such that 
\begin{equation}
    \label{eq:exp_moment}
\sup_{n \geq 1} \sup_{\F_n \in M_n} \Ee_{\F_n}(\exp(a\|\xi_{i,n} - \mu(\F_n) \|_1)) \leq C_2.
\end{equation}
Furthermore, we assume
\begin{equation}
\label{eq:rare}
\limsup_n n^{1/2}\cdot \frac{\Pp_{(\F_n \times \Q)}[M_n \in \mQ_n^*] - \Pp_{(\Phi_n \times \Q)}[M_n \in \mQ_n^*]}{\Pp_{(\Phi_n \times \Q)}[M_n \in \mQ_n^*]} \leq C_3.
\end{equation}

Then, for any $g$ with uniformly bounded derivatives up to third order
\begin{equation}
\label{eq:conv}
\left| \Ee_{\F_n^*}\left[g\big(P(T_n)\big)\right] - \int_{0}^1 g(x) dx\right| \leq 
n^{-1/2} K(g, C_1, C_2, C_3, p), \quad n \geq n_0
\end{equation}
where $K$ depends only on the bounds on the
derivatives of $g$, the constants  $C_1, C_2, C_3$ and the dimension $p$. 
Thus the convergence is uniform in $(M_n)_{n \geq 1}$ for models 
satisfying \eqref{eq:derivatives}, \eqref{eq:exp_moment} and \eqref{eq:rare}. 
\end{theorem}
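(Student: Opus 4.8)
The plan is to view the pivot $P(T_n)$ as a smooth functional of the normalized statistic $Z_n = \sqrt{n}(T_n - \mu_n)$, weighted by the selective likelihood ratio, and to transfer the desired statement from $Z_n$ to its Gaussian analog using a Lindeberg/Chatterjee swapping argument. Concretely, for a test function $g$ with bounded derivatives, write $\Ee_{\F_n^*}[g(P(T_n))] = \Ee_{\F_n}[\bar\ell_{\F_n}(Z_n)\, g(\bar P_{\F_n}(Z_n))]$, and similarly with $\F_n$ replaced by its Gaussian counterpart $\Phi_n$ giving $\Ee_{\Phi_n}[\bar\ell_{\Phi_n}(Z_n^\Phi)\, g(\bar P_{\Phi_n}(Z_n^\Phi))]$. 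Since under $\Phi_n$ the statistic $T_n$ is \emph{exactly} $N(\mu_n, \Sigma_n/n)$ (up to the negligible remainder $R$), Lemma \ref{lem:exact} tells us $\bar P_{\Phi_n}$ is exactly $\Unif(0,1)$ under $\Phi_n^*$, so the Gaussian term equals $\int_0^1 g(x)\,dx$ precisely (modulo handling $R$). The whole problem therefore reduces to bounding the difference between the $\F_n$-expectation and the $\Phi_n$-expectation of the common functional $h_n(z) := \bar\ell(z)\, g(\bar P(z))$.

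First I would record that $h_n$ inherits controlled growth from the hypotheses: by the chain rule and the fact that $g$ has bounded derivatives up to third order, the Sobolev-type seminorm $\lambda_3^\Omega(h_n)$ is bounded in terms of $C_1$ and the derivative bounds on $g$ — this is where assumption \eqref{eq:derivatives} on $\bar P_n$, $\bar\ell_{\F_n}$, $\bar\ell_{\Phi_n}$ enters, since a product and composition of functions each with bounded $\lambda_3^\Omega$ again has bounded $\lambda_3^\Omega$ (up to constants). Next I would apply the Chatterjee-style replacement argument: $Z_n = n^{-1/2}\sum_i (\xi_{i,n} - \mu_n)$ is a normalized sum of i.i.d. centered vectors, and we compare $\Ee[h_n(Z_n)]$ with $\Ee[h_n(W_n)]$ where $W_n \sim N(0,\Sigma_n)$ by swapping one summand at a time for a matched Gaussian summand and Taylor-expanding $h_n$ to third order. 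The second-order terms match because the first two moments of $\xi_{i,n} - \mu_n$ agree with the Gaussian, and the third-order remainder is controlled by the third absolute moment of $\xi_{i,n}$ — here the exponential-moment bound \eqref{eq:exp_moment} both supplies a uniform bound on the relevant third moments and, crucially, tames the $\exp(\Omega(s))$ factor in $\lambda_3^\Omega$, since the increments $\xi_{i,n}-\mu_n$ have light enough tails that $\Ee[\exp(r\Omega(Z_n))]$ stays bounded uniformly in $n$. Summing the $n$ telescoping terms, each of size $O(n^{-3/2})$, yields the $O(n^{-1/2})$ bound in \eqref{eq:conv}.

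The remaining gap is that $h_n$ is built from the \emph{selective} likelihood ratio, so the ``expectations'' above are against $\F_n$ and $\Phi_n$ directly, but the normalizing constants $\Ee_{\F_n}[\modelweight]$ and $\Ee_{\Phi_n}[\modelweight]$ inside $\bar\ell_{\F_n}$ and $\bar\ell_{\Phi_n}$ differ — this is exactly the selection probability ratio, and assumption \eqref{eq:rare} controls it: it says the relative discrepancy in selection probability is $O(n^{-1/2})$, so replacing one normalizer by the other costs only $O(n^{-1/2})$ in the final bound. I would handle this by writing $\bar\ell_{\F_n} = \modelweight/\Pp_{\F_n\times\Q}[M_n\in\mQ_n^*]$, carrying out the swapping argument with the \emph{unnormalized} weight $\modelweight$ (whose derivatives are controlled since $\modelweight(z) = \Q(t;V_\eta)$-type integrals are as smooth as required by \eqref{eq:derivatives} applied to $\bar\ell$), and only at the end dividing by the normalizers and invoking \eqref{eq:rare} to compare them. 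Finally I would dispose of the linearization remainder $R = o_p(n^{-1/2})$ by noting it contributes a vanishing perturbation to $Z_n$ inside the smooth functional $h_n$; since $R$ is bounded a.s. and $o_p(n^{-1/2})$, its effect on $\Ee[h_n(Z_n + \sqrt{n}R)]$ versus $\Ee[h_n(Z_n)]$ is $o(n^{-1/2})$, absorbable into the constant $K$ for $n \geq n_0$.

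\textbf{Main obstacle.} The delicate point is the interplay between the unbounded-derivative allowance in $\lambda_3^\Omega$ and the Lindeberg swap: a naive third-order Taylor bound would require $\sup\|\partial^3 h_n\|$ globally, which we do not have. The fix — and the technical heart of the argument, mirroring the modification of \cite{chatterjee2005simple} alluded to in the text — is to localize, bounding $\|\partial^3 h_n(s)\| \leq \lambda_3^\Omega(h_n)\exp(3\Omega(s))$ and then showing $\Ee[\exp(3\Omega(\text{intermediate points}))]$ is $O(1)$ uniformly, which is precisely what the exponential moment hypothesis \eqref{eq:exp_moment} buys (the partial sums appearing in the interpolation stay in a region of exponentially small probability outside a ball, so the $\exp(3\Omega)$ weight integrates). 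Getting these localization bounds uniform over $\F_n \in M_n$ and over the interpolation path is where the real work lies.
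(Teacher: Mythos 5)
Your proposal follows essentially the same route as the paper's proof: rewrite both selective expectations via the selective likelihood ratio, use Lemma \ref{lem:exact} for exactness under the Gaussian law, bound the $\F_n$-versus-$\Phi_n$ discrepancy of the weighted functional by a Chatterjee-type Lindeberg swap with the $\lambda_3^{\Omega}$ seminorm and the exponential-moment condition \eqref{eq:exp_moment} (the paper's Lemma \ref{lem:ext:sourav}), and absorb the mismatch of the two normalizing constants using condition \eqref{eq:rare}. The only differences are cosmetic (you swap with the unnormalized weight and divide at the end, while the paper swaps with $\bar{\ell}_{\Phi_n}$ directly and handles the normalizer ratio as a separate term), and your treatment of the linearization remainder and of the localization of third derivatives matches the paper's intent.
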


Theorem \ref{thm:weak:conv} provides a finite sample bound on the convergence of the
pivot $P(T_n)$. Since we allow $g$ to be functions with uniformly bounded derivatives
up to the third order, \eqref{eq:conv} implies convergence of $P(T_n)$ to $\Unif(0,1)$
under $\F_n^*$. In the following examples, we show how to verify conditions \eqref{eq:derivatives},
\eqref{eq:exp_moment} and \eqref{eq:rare}.

\subsection{Revisit the ``file drawer problem''}
\label{sec:mean:CLT}

In Examples \ref{example1} and \ref{example2}, we considered only reporting an
interval or a $p$-value about $\mu_n$ when $n^{1/2}\bar{X}_n > 2$ or
$n^{1/2}\bar{X}_n + \omega > 2$. This is an example where we do not really
select a model, but rather select only a proportion of the data to report.  The
selective distribution simply refers to the law of the reported sample means,
which pass the threshold.

The data we observe is $D_n = (X_{1,n,\dots, X_{n,n}})$ with the linearizable
statistic $T_n$ simply being the sample mean $\bar{X}_n$.
Example \ref{example1} corresponds to the degenerate randomization of adding
0 to $\bar{X}_n$. Work of \cite{tian2015asymptotics} show that in order
for the corresponding pivot to converge weakly we can take, for $\Delta < 0$ fixed
\begin{equation}
\label{eq:example1:model}
M_n = \left\{F: \E_F[\bar{X}_n] > n^{-1/2} \Delta, \E_F[X_{i,n}^3] < \infty \right\}.
\end{equation}
That is, $\bar{X}_n$ will satisfy a selective CLT when
the population mean is not too negative. 

On the other hand, in Example \ref{example2}, the pivot in \eqref{eq:pivot} is of the form, 
\begin{equation}
\label{eq:pivot:example2}
P(\bar{X}_{n}) = \frac{\int_{\bar{X}_{n}}^{\infty} \bar{G}(2  - \sqrt{n}t) e^{-n(t-\mu_n)^2/2}  \; dz}{\int_{-\infty}^{\infty} \bar{G}(2  - \sqrt{n}t) e^{-n(t-\mu_n)^2/2} \; dz},
\end{equation}
and likelihood $\ell_{\F_n}(\bar{X}_n)$ is defined in \eqref{eq:likelihood2}. 

When $G$ is the Logistic noise, then condition \eqref{eq:derivatives} and \eqref{eq:rare}
can be verified. Formally, we have the following lemma whose proof we defer to the appendix,
\begin{lemma} 
\label{lem:file:drawer}
If $G = \Logistic(\kappa)$, with $\kappa$ being the scale parameter, then if centered $X_{i,n}$'s
have moment generating functions in the neighbourhood of zero,
then the pivot $P(\bar{X}_n)$ is asymptotically $\Unif(0,1)$.
\end{lemma}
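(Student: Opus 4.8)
The plan is to obtain Lemma \ref{lem:file:drawer} as the one-dimensional instance of the selective central limit theorem, Theorem \ref{thm:weak:conv}. Take $D_n=(X_{1,n},\dots,X_{n,n})$, the linearizable statistic $T_n=\bar X_n$ (so $p=1$, $\xi_{i,n}=X_{i,n}$, $\mu_n=\E_{\F_n}[X_{i,n}]$, $\Sigma_n=1$, remainder $R=0$), the randomized selection $\mQ^*$ of Example \ref{example2} with $\Q=\Logistic(\kappa)$, and contrast $\eta=1$. Then $\sigma^2_\eta=1$ and $V_\eta\equiv 0$, so Lemma \ref{lem:exact} specializes to the pivot \eqref{eq:pivot:example2}, and writing $m_n:=2-\sqrt n\,\mu_n$ and $Z_n:=\sqrt n(\bar X_n-\mu_n)$ one has the explicit forms
\[
\bar{P}_{\F_n}(z)=\frac{\int_z^{\infty}\bar{G}(m_n-u)e^{-u^2/2}\,du}{\int_{-\infty}^{\infty}\bar{G}(m_n-u)e^{-u^2/2}\,du},\qquad
\bar{\ell}_{\F_n}(z)=\frac{\bar{G}(m_n-z)}{\E_{\F_n}[\bar{G}(m_n-Z_n)]},
\]
with $\bar{\ell}_{\Phi_n}$ the same ratio with the expectation taken under $Z\sim N(0,1)$. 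As model class I take $M_n=\{\F_n:\E_{\F_n}[(X_{i,n}-\mu_n)^2]=1,\ \E_{\F_n}[e^{a|X_{i,n}-\mu_n|}]\le C\}$, with no constraint on $\mu_n\in\real$. It remains to verify the three hypotheses \eqref{eq:derivatives}, \eqref{eq:exp_moment}, \eqref{eq:rare}; condition \eqref{eq:exp_moment} is precisely the hypothesis of the lemma, and \eqref{eq:rare} will be the crux.

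For \eqref{eq:derivatives} I would use the norm $\Omega(z)=|z|/\kappa$. The elementary facts needed about the Logistic survival function are $\bar{G}(t)\le\min(1,e^{-t/\kappa})$ for $t\ge 0$ and $|\bar{G}^{(k)}(t)|\le C_k(\kappa)e^{-|t|/\kappa}$ for every $k\ge 1$ (the density and all of its derivatives decay exponentially at rate $\kappa^{-1}$). Combined with the two-sided estimate $\E_{\F_n}[\bar{G}(m_n-Z_n)]\asymp e^{-(m_n)_+/\kappa}$ uniformly over $M_n$ — the upper bound from $\bar{G}(m_n-u)\le e^{-(m_n-u)/\kappa}$ together with the uniform bound on $\E_{\F_n}[e^{Z_n/\kappa}]$, the lower bound because $Z_n$ carries $\Omega(1)$ probability near the origin (uniform Berry--Esseen, legitimate under the MGF hypothesis) where $\bar{G}(m_n-\cdot)$ is of order $e^{-(m_n)_+/\kappa}$ — the leading exponential cancels, so $|\partial^k\bar{\ell}_{\F_n}(z)|,|\partial^k\bar{\ell}_{\Phi_n}(z)|\le C_k(\kappa)e^{|z|/\kappa}$ for $1\le k\le 3$, while $\bar{P}_{\F_n}$ in fact has derivatives of every order bounded by a constant (the extra Gaussian weight $e^{-u^2/2}$ in the numerator dominates). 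Since $\big(C_k(\kappa)e^{|z|/\kappa}\big)^{3/k}e^{-3|z|/\kappa}$ is bounded for each $k\in\{1,2,3\}$, one gets $\sup_{\F_n\in M_n}\lambda^{\Omega}_3(f)\le C_1(\kappa,C)$ for $f\in\{\bar{P}_{\F_n},\bar{\ell}_{\F_n},\bar{\ell}_{\Phi_n}\}$, uniformly in $n$.

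The main obstacle is the rare-event condition \eqref{eq:rare}. Here $\Parg{M_n\in\mQ^*_n}$ equals $\E_{\F_n}[\bar{G}(m_n-Z_n)]$ under $\F_n\times\Q$ and $\E_{N(0,1)}[\bar{G}(m_n-Z)]$ under $\Phi_n\times\Q$; in the difficult regime $\mu_n\le-\delta$ both are of order $e^{-m_n/\kappa}=e^{(\sqrt n\,\mu_n-2)/\kappa}$, i.e.\ exponentially small, so a crude bound is useless and one must exploit cancellation. The exponential Logistic tail is chosen precisely so this leading factor is common to the two quantities: dividing it out reduces \eqref{eq:rare} to comparing $\E_{\F_n}[\phi_n(Z_n)]$ with $\E_{N(0,1)}[\phi_n(Z)]$, where $\phi_n(z):=e^{(m_n)_+/\kappa}\bar{G}(m_n-z)$ has $k$-th derivative of size $O_k(e^{|z|/\kappa})$, and the denominator $\E_{N(0,1)}[\phi_n(Z)]$ is bounded above and below by positive constants (using $\phi_n(z)\le e^{z/\kappa}$, $\E[e^{Z/\kappa}]=e^{1/(2\kappa^2)}$, and $\phi_n(z)\ge\tfrac12$ on $\{z\ge 0\}$). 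A Berry--Esseen/Edgeworth estimate for this smooth but exponentially growing test function — obtained by the same Lindeberg replacement argument as in the proof of Theorem \ref{thm:weak:conv} in Section \ref{sec:proof}, where the third-order swap error $\lesssim n^{-3/2}\sum_i\E_{\F_n}[|X_{i,n}-\mu_n|^3\,\sup|\phi_n'''|]$ is absorbed by the uniform MGF bound — yields $|\E_{\F_n}[\phi_n(Z_n)]-\E_{N(0,1)}[\phi_n(Z)]|=O(n^{-1/2})$, so the ratio in \eqref{eq:rare} is $n^{1/2}\cdot O(n^{-1/2})/\Theta(1)=O(1)$; the easy regime $\mu_n\ge-\delta$ (selection probability bounded below or tending to $1$) is handled the same way, the left side of \eqref{eq:rare} then tending to $0$. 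With \eqref{eq:derivatives}, \eqref{eq:exp_moment} and \eqref{eq:rare} verified, Theorem \ref{thm:weak:conv} gives $|\E_{\F_n^*}[g(P(\bar X_n))]-\int_0^1 g(x)\,dx|\le K n^{-1/2}$ for every thrice-differentiable $g$, i.e.\ $P(\bar X_n)\Rightarrow\Unif(0,1)$ under the selective law $\F_n^*$, uniformly over $M_n$.
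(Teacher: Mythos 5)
Your proposal is correct and follows essentially the same route as the paper's proof: it reduces the lemma to Theorem \ref{thm:weak:conv}, verifies \eqref{eq:derivatives} via the logistic tail and MGF bounds (the content of Lemmas \ref{lem:tail}, \ref{lem:mgf} and \ref{lem:logistic:exp}), and verifies \eqref{eq:rare} by the Lindeberg-type comparison of Lemma \ref{lem:ext:sourav} after cancelling the common exponential factor $e^{\kappa\sqrt{n}\mu_n}$, which is exactly the cancellation the paper exploits. Your explicit factoring into $\phi_n$ and the Berry--Esseen lower bound on the normalizing expectation are only cosmetic variants of the paper's Jensen-type lower bound and its two-regime ($\mu_n<0$ versus $\mu_n\geq 0$) treatment.
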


In other words, with Logistic randomization noise,
we can take the sequence of models to be
\begin{equation}
\label{eq:example2:model}
M_n = \left\{\F_n: \E_{\F_n}\left[\exp\big(a\abv{X_{1,n} - \mu_n}\big)\right] < \infty \right\}, \text{ for some } a>0.
\end{equation}
Requiring exponential moments is stricter than the third moment condition in 
\eqref{eq:example1:model}, but
we would have a stronger conclusion, namely weak convergence uniformly
over all $\mu_n$'s. 

\subsection{Two-sample median problem}
\label{sec:median}

In the two-sample median problem, 
we have two treatment groups from which we take measurements, $x_{1i} \iid \F_1$ and $x_{2i} \iid \F_2$;
for simplicity of notation, we assume we observe $n$ samples from each group, and drop $n$ in the subscript. 
We will report the bigger median from this group in the non-randomized setting. Exact
formulation of randomized selection will be discussed below.

Suppose our underlying distribution is $\F = \F_1 \times \F_2$. Let $\mu = (\mu_1, \mu_2)$ is the population median of the two groups,
and $T = (T_1, T_2)$ is the sample median. The well-known result by \cite{sample_median} states that the sample
median is a linearizable statistic for the median when the CDF of the distribution $F$ has positive density $f$,
and $f'$ is bounded in a neighbourhood of the population median $m$. Formally, if $x_i \iid F$, then the sample
median
\begin{equation}
\label{eq:median:decomp}
T(x_1, \dots, x_n) = m + \frac{1}{n} \sum_{i=1}^n \frac{\mathbf{1}\{x_i > m\} - 1/2}{F'(m)} + R_n, 
\end{equation}
with $R = O(n^{-3/4} \log n)$ with probability $1$.

Our (randomized) selection algorithm $\mQ^*$ reports
\[
    \begin{cases}
        P(T;\mu_1,\Sigma), & \text{if } T_1 > T_2 + n^{-1/2}\omega \\
        P(T;\mu_2,\Sigma), & \text{if } T_1 \leq T_2 + n^{-1/2}\omega,
    \end{cases}
\]
where $\omega \sim \Q$ and $\Sigma = \diag(\frac{1}{4}f_1(\mu_1)^{-2}, \frac{1}{4}f_2(\mu_2)^{-2})$ is a diagonal matrix.
$f_1$, $f_2$ are the densities of $\F_1$ and $\F_2$.
Without loss of generality, we suppose $M_1$ is selected, i.e. the first group is the ``best'' group.

We choose the randomization noise $\Q$ to be a $\text{Logistic}(\kappa)$ with mean $0$ and $\kappa$ is the scale, and let $G_{\kappa}$
be the CDF.
The resulting pivot for $\mu_1$ is
$$
P(T; \mu_1,\Sigma) = \frac{\int_{T_1}^{\infty}  G_{\kappa}(\sqrt{n}t - \sqrt{n}T_2)  \cdot \exp(-n(t-\mu_1)^2 / 2\sigma_1^2)\; dt}{\int_{-\infty}^{\infty} G_{\kappa}(\sqrt{n}t - \sqrt{n}T_2) \cdot \exp(-n(t-\mu_1)^2 / 2\sigma_1^2)\; dt}, ~ \sigma_1^2 = \frac{1}{4f_1(\mu_1)^2}.
$$
This pivot strikes a similarity with the pivot in \eqref{eq:pivot:example2} for 
Example \ref{example2} with the truncation threshold $2$ being replaced by $\sqrt{n}T_2$ 
and plugging in the appropriate means and variances of the medians. A result similar
to Lemma \ref{lem:file:drawer} can be established, which ensures convergence of the pivot
uniformly for any underlying medians $(\mu_1, \mu_2)$.

In order to construct the above pivot, we need knowledge of the variance
$\sigma^2_1$. Without selection, there are natural estimates of this
variance. One may ask, how will inference be affected if we plug
this estimate into our pivot? We revisit this question in Section \ref{sec:plugin:variance}.

\subsection{Affine selection events}

In this section, we discuss the special case of affine selection events (regions). 
This combined with the asymptotic result in Theorem \ref{thm:weak:conv} applies to
more general settings. In particular, it allows us to approximate non-affine regions. 
For a concrete example, see Section \ref{sec:randomized_logistic}. 

We drop the subscript $n$ where possible to simplify notations.
Suppose for our model $M$, the selection is based on $(T,\omega)$, and the selection event $\{M \in \mQ^*\}$ can be described as 
$$
\{\sqrt{n}A_M T + \omega \in K_M\},
$$ 
where the affine matrix $A_M \in \real^{d \times p}$ and $K_M$ is a region in $\real^d$.
Many examples of non-randomized selective inference can be expressed in
this way (c.f. \cite{lee_lasso,spacings,exact_screening,sequential_selection}).
In this section, we provide conditions under which Theorem \ref{thm:weak:conv}
can be applied.

We again normalize $T$ to be $Z = \sqrt{n}(T - \mu)$, then
the selection event can be rewritten as 
\begin{equation}
    \label{eq:selection_rr}
    \{A_M(Z+\Delta) + \omega \in K_M\},
\end{equation}
where $\sqrt{n}\mu = \Delta$, $Z$ converges to $N(0, \Sigma)$.

Suppose $\omega \sim \Q$, which has distribution function $G$. Then we introduce some conditions on the selection region $K_M$ 
and the added noise distribution $G$,
\begin{description}
\item{\bf Lower bound:}
We assume there is some norm $h$, such that 
$$
\int_{K_M - \theta} G(dw) \geq C^- \exp\left[-\inf_{w \in K_M -\theta} h(w)\right], ~ \forall \theta \in \real^d.
$$
\item{\bf Smoothness:}
Suppose $G$ has density $g$, we assume the first 3 derivatives of $g$ are integrable,
$$
\int_{\real^d}\|\partial ^j g(w)\| dw \leq C_j, j=0,1,2,3
$$
where the norm on the left-hand side is the maximum element-wise of the partial derivatives.
\end{description}

The above two conditions essentially require $G$ to be differentiable and have heavier tails than (or equal to) exponential tails.
In fact we prove that the lower bound and smoothness conditions ensure that \eqref{eq:derivatives} are satisfied under the
{\em local alternatives} introduced below.

\begin{definition}[Local alternatives]
    For the sequence of selected model $(M_n)_{n \geq 1}$, we define the local alternatives of radius of $B$ 
    to be the set all sequences $(\mu_n)_{n \geq 1}$, such that  
    $$
    d_h(0, K_{M_n} - A_{M_n} \Delta) \leq B, \Delta = \sqrt{n}\mu_n
    $$
    where $d_h(\cdot,\cdot)$ is the distance induced by the norm $h$.
\end{definition}
The notion of local alternatives is natural in the asymptotic setting as we
expect even a small effect size will be more prominent when we collect more and
more data. 

Formally, we have the following lemma, whose proof is deferred to the appendix.

\begin{lemma}
    \label{lem:asymptotic}
    Suppose $G$, $K_M$ satisfy the lower bound and smoothness conditions, then condition \eqref{eq:derivatives}
    are satisfied under the local alternatives.
    
\end{lemma}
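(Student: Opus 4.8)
The plan is to verify \eqref{eq:derivatives} separately for the two likelihood ratios, where a genuinely uniform bound on the first three derivatives suffices, and for the pivot $\bar P_n$, where the exponential weight in $\lambda_3^\Omega$ is really needed. The one computation underpinning everything is that, after normalizing $Z=\sqrt n(T-\mu)$ and writing $\Delta=\sqrt n\mu$, the selection weight loses all its $\sqrt n$-dependence:
\[
\modelweight(M;T)=\Q\{\omega:\sqrt n A_M T+\omega\in K_M\}=\int_{K_M-A_M\Delta-A_M Z} g(w)\,dw=\int_{K_M} g\big(v-A_M\Delta-A_M Z\big)\,dv.
\]
Differentiating under the integral, each $Z$-derivative pulls out a constant linear combination of the entries of $A_M$ and promotes one extra derivative of $g$; hence the smoothness condition gives $\|\partial^k_Z\modelweight\|\le c(A_M)\,C_k$ for $k=0,\dots,3$, \emph{uniformly in $Z$}. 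Since $\bar\ell_{\F_n}$ equals $Z\mapsto\modelweight(M;\mu_n+n^{-1/2}Z)$ divided by its $\F_n$-expectation (and similarly $\bar\ell_{\Phi_n}$ with $\Phi_n$), it remains to bound the normalizers from below. By the lower bound condition, the triangle inequality for $h$, and the local-alternative radius $d_h(0,K_M-A_M\Delta)\le B$,
\[
\modelweight(M;\,\mu_n+n^{-1/2}z)\ \ge\ C^-\exp\!\big(-d_h(0,K_M-A_M\Delta-A_M z)\big)\ \ge\ C^-e^{-B}\,e^{-h(A_M z)},
\]
so taking expectations over $Z_n$ (exactly Gaussian under $\Phi_n$; sub-exponential under $\F_n$ by \eqref{eq:exp_moment}, so that $\Pp(\|Z_n\|\le M_0)$ is bounded below for $M_0$ large) gives $\Ee[\modelweight]\ge c'>0$ uniformly. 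Thus the first three derivatives of $\bar\ell_{\F_n}$ and $\bar\ell_{\Phi_n}$ are uniformly bounded, so $\lambda_3^\Omega$ of each is finite for any norm $\Omega\ge 0$.

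For the pivot the same rescaling is applied inside the integral over the contrast coordinate in \eqref{eq:pivot}: substituting $t=\eta^T\mu+n^{-1/2}s$ turns the Gaussian factor into $e^{-s^2/2\sigma_\eta^2}$ and turns $\Q(t;V_\eta)=\modelweight(M;\,t\Sigma\eta/\sigma_\eta^2+V_\eta)$ into $m(s,Z):=\int_{K_M-\beta(s,Z)} g$, with
\[
\beta(s,Z)=A_M\Delta+A_M\big(I-\sigma_\eta^{-2}\Sigma\eta\eta^T\big)Z+\sigma_\eta^{-2}\big(A_M\Sigma\eta\big)s
\]
affine in $(s,Z)$ with $n$-free coefficients; the Jacobian $n^{-1/2}$ cancels between numerator and denominator, leaving
\[
\bar P_n(Z)=\frac{\mathcal N(Z)}{\mathcal D(Z)},\qquad \mathcal N(Z)=\int_{\eta^T Z}^{\infty} m(s,Z)e^{-s^2/2\sigma_\eta^2}\,ds,\quad \mathcal D(Z)=\int_{-\infty}^{\infty} m(s,Z)e^{-s^2/2\sigma_\eta^2}\,ds.
\]
As above, smoothness gives $\sup_{s,Z}|\partial^k_{(s,Z)}m|\le C_k'$ for $k\le 3$, hence $\sup_Z\|\partial^k\mathcal N(Z)\|$ and $\sup_Z\|\partial^k\mathcal D(Z)\|$ are bounded — the boundary terms produced by differentiating $\int_{\eta^T Z}^\infty$ are of the form $\mathrm{poly}(\eta^T Z)\,e^{-(\eta^T Z)^2/2\sigma_\eta^2}$, hence bounded — and $0\le\mathcal N\le\mathcal D\le\sqrt{2\pi}\,\sigma_\eta$. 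The only quantity not bounded below uniformly is $\mathcal D(Z)$: applying the lower bound condition to $m(s,Z)$ and peeling off $A_M\Delta$ with the local-alternative radius,
\[
\mathcal D(Z)\ \ge\ C^-e^{-B}\,e^{-\tilde h(Z)}\!\int_{-\infty}^{\infty} e^{-|s|\,h(\sigma_\eta^{-2}A_M\Sigma\eta)}e^{-s^2/2\sigma_\eta^2}\,ds\ \ge\ c''\,e^{-\tilde h(Z)},\qquad \tilde h(Z):=h\!\big(A_M(I-\sigma_\eta^{-2}\Sigma\eta\eta^T)Z\big).
\]

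Combining these, the quotient rule (using $\mathcal N/\mathcal D\le 1$ and $\mathcal D\le\sqrt{2\pi}\sigma_\eta$ to absorb lower powers of $\mathcal D^{-1}$) gives $\|\partial^k\bar P_n(Z)\|\le \widetilde C_k\,\mathcal D(Z)^{-k}\le \widetilde C_k(c'')^{-k}e^{k\tilde h(Z)}$ for $k=1,2,3$. Hence, choosing $\Omega=\Omega_n$ to be any norm dominating $\tilde h$ up to an additive constant — e.g.\ $\Omega(Z)=\tilde h(Z)+\varepsilon\|Z\|$, which is then automatically admissible for the two likelihood ratios as well — we get
\[
\lambda_3^\Omega(\bar P_n)=\sup_{Z}\ \max_{1\le k\le 3}\|\partial^k\bar P_n(Z)\|^{3/k}e^{-3\Omega(Z)}\ \le\ \max_{1\le k\le 3}\big(\widetilde C_k(c'')^{-k}\big)^{3/k}\sup_{Z} e^{3\tilde h(Z)-3\Omega(Z)}\ =:\ C_1,
\]
and the same $\Omega$ makes $\lambda_3^\Omega(\bar\ell_{\F_n}),\lambda_3^\Omega(\bar\ell_{\Phi_n})$ finite, with $C_1$ depending only on $B$, $C^-$, the constants $C_j$ of the smoothness condition, the exponential-moment constant, and (assumed bounded over $n$) $\|A_{M_n}\|$, $\|\Sigma_n\|$, $\|\eta_n\|$ and $\sigma_{\eta,n}$. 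The main obstacle is precisely this last step for the pivot: one must recognize that $\mathcal D(Z)$ is \emph{not} uniformly bounded below but decays like $e^{-\tilde h(Z)}$ — which is exactly why the conclusion is phrased through the exponentially weighted $\lambda_3^\Omega$ rather than a plain $C^3$ bound — and identify the right tail norm $\tilde h$ (the distribution's tail norm $h$ transported through $A_M$ composed with the $\Sigma$-orthogonal projection off the contrast $\eta$), while checking that the polynomial-times-Gaussian boundary terms and the $\sqrt n$-rescaling leave the cancellations intact.
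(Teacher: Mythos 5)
Your proof is correct and takes essentially the same route as the paper's: uniform upper bounds on $\partial^k\modelweight$ from the smoothness condition, a lower bound on the normalizer via the lower bound condition and the local-alternative radius (the paper's Lemma \ref{lem:rare:lower} and Lemma \ref{lem:LR}), and for the pivot a denominator lower bound of order $e^{-h(ALz)}$ leading to $\|\partial^k \bar{P}_n\| \lesssim e^{k\,h(ALz)}$ (the paper's Lemma \ref{lem:derivative}), absorbed by the weight in $\lambda_3^{\Omega}$. Your treatment even makes explicit the choice of $\Omega$ dominating $h(AL\,\cdot)$, which the paper leaves implicit.
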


Now, we are left to verify conditions \eqref{eq:exp_moment} and \eqref{eq:rare}. Condition \eqref{eq:exp_moment}
is essentially a moment condition on the centered statistics $\xi_{i,n} - \mu_n$, which we have to assume.
Condition \eqref{eq:rare} can be verified using the well known results in multivariate CLT (see \cite{multivariate_clt}).
To be rigorous, we state the following lemma, which we also prove in the appendix.

\begin{lemma} 
\label{lem:berry}
If $\F_n$ is such that the centered statistics $\xi_{i,n} - \mu_n$ have finite third moments, then under
the local alternatives, condition \eqref{eq:rare} is satisfied.
\end{lemma}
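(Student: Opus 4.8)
The plan is to rewrite the selection probability under either law as the expectation of one fixed smooth, bounded test function of the normalized statistic, bound the numerator of \eqref{eq:rare} by a smooth–function multivariate Berry--Esseen estimate, and bound the denominator from below using the lower bound condition on $G$ together with the local‑alternatives restriction.

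\emph{Step 1 (reduction to a smooth test function).} Integrating out $\omega$ first, the selection event \eqref{eq:selection_rr} has probability
\[
\Pp_{(\F_n \times \Q)}[M_n \in \mQ_n^*] = \Ee\big[\psi\big(\sqrt{n}(T-\mu)\big)\big], \qquad \psi(z) = \int_{K_M - A_M(z+\Delta)} g(w)\,dw ,
\]
and likewise for $\Phi_n \times \Q$, the only difference being the law of $\sqrt{n}(T-\mu)$. The smoothness condition on $g$ gives $0 \le \psi \le 1$ and, after a change of variables, $\|\partial^k \psi(z)\| \le \|A_M\|^k C_k$ for $k \le 3$, so $\psi$ is a bounded function with bounded first three derivatives, uniformly over the models under consideration.

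\emph{Step 2 (numerator).} Write $\sqrt{n}(T-\mu) = n^{-1/2}\sum_{i=1}^n (\xi_{i,n} - \mu_n) + \sqrt{n}R$. Since $\psi$ has bounded third derivatives and, by \eqref{eq:exp_moment} (or directly by the third‑moment hypothesis), $\rho_n := \Ee\|\xi_{i,n}-\mu_n\|^3$ is bounded, a multivariate smooth‑function central limit bound \cite{multivariate_clt} gives
\[
\Big| \Ee_{\F_n}\big[\psi(n^{-1/2}\textstyle\sum_i(\xi_{i,n}-\mu_n))\big] - \Ee_{\Phi_n}\big[\psi(N(0,\Sigma_n))\big] \Big| \le C'\, n^{-1/2}.
\]
The contribution of the linearization remainder is controlled via $R = o_p(n^{-1/2})$ together with $R$ bounded a.s.\ and $\|\partial\psi\|_\infty < \infty$, and is of smaller order. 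Hence the numerator of \eqref{eq:rare} is at most $C' n^{-1/2}$.

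\emph{Step 3 (denominator, via local alternatives) and conclusion.} Applying the lower bound condition with $\theta = A_M(z+\Delta)$ yields $\psi(z) \ge C^- \exp[-d_h(0, K_M - A_M(z+\Delta))]$. By the triangle inequality $d_h(0, K_M - A_M(z+\Delta)) \le d_h(0, K_M - A_M\Delta) + h(A_M z) \le B + h(A_M z)$ under local alternatives of radius $B$, so $\psi(z) \ge C^- e^{-B} e^{-h(A_M z)}$. Taking expectations under $Z \sim N(0,\Sigma_n)$ and bounding $\Ee[e^{-h(A_M Z)}]$ below by a positive constant (e.g.\ restricting to a fixed Euclidean ball and using comparability of $h$ with the Euclidean norm and boundedness of $\Sigma_n$, $\|A_{M_n}\|$), the denominator $\Pp_{(\Phi_n\times\Q)}[M_n \in \mQ_n^*]$ is bounded below by a constant $c>0$ uniformly over the sequence. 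Combining Steps 2 and 3, $n^{1/2}\cdot(\text{numerator})/(\text{denominator}) \le C'/c =: C_3$, which is \eqref{eq:rare}.

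\emph{Main obstacle.} The crux is the uniform positive lower bound on the denominator in Step 3: this is exactly where the lower bound (heavier‑than‑exponential‑tail) condition on $G$ and the restriction to local alternatives are indispensable — without the latter the Gaussian selection probability can decay faster than the $n^{-1/2}$ numerator and \eqref{eq:rare} fails. The remaining care is bookkeeping: keeping $C'$ and $c$ (hence $C_3$) independent of $n$ and of $M_n$, which requires uniform control of $\|A_{M_n}\|$, $\Sigma_n$, the derivative bounds $C_k$, and the third moments $\rho_n$.
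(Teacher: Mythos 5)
Your proof is correct and its overall skeleton matches the paper's: bound the numerator of \eqref{eq:rare} by a Berry--Esseen type $O(n^{-1/2})$ estimate and bound the Gaussian denominator below by a constant using the lower bound condition on $G$ together with the local-alternatives radius $B$; your Step 3 is exactly the paper's Lemma \ref{lem:rare:lower}. Where you genuinely differ is the numerator. The paper conditions on $\omega$ and applies the multivariate Berry--Esseen theorem directly to the indicator of the section $U_n(\omega)=\{z:(z,\omega)\in U_n\}$ (its Lemma \ref{lem:berry:ext}), so it needs only finite third moments and never uses the smoothness of $g$ in that step; on the other hand, a set-valued Berry--Esseen bound requires the sections to lie in a suitable class (e.g.\ convex sets), a point the paper glosses over by writing a supremum over all sets $U$. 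You instead integrate out $\omega$ first, obtaining the $\Q$-smoothed function $\psi(z)=\int_{K_M-A_M(z+\Delta)}g(w)\,dw$, whose first three derivatives are bounded by the smoothness condition, and then invoke a smooth-function CLT comparison (essentially Lemma \ref{lem:ext:sourav} with $\Omega\equiv 0$, i.e.\ a Lindeberg swap, which indeed needs only third moments). This buys you freedom from any geometric assumption on $K_M$, at the price of invoking the smoothness condition on $g$ --- harmless here, since that condition is assumed throughout the affine-selection subsection. One caveat, shared with the paper: handling the linearization remainder via $R=o_p(n^{-1/2})$ and boundedness of $\partial\psi$ only yields an $o(1)$ contribution rather than an $O(n^{-1/2})$ one; the paper sidesteps this by taking the residual to be zero ``without loss of generality'' in Lemma \ref{lem:ext:sourav}, so you are at the same level of rigor, but a fully uniform finite-sample statement would need a rate on $R$ as in \eqref{eq:median:decomp}.
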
 

To summarize, Lemma \ref{lem:asymptotic} and Lemma \ref{lem:berry} state that 
if $G$ has integrable derivatives and exponential tails,
then the pivot in \eqref{eq:pivot} converges to $\Unif(0,1)$ uniformly for $\F_n^*$ so long as $\F_n$'s
are such that $\xi_{i,n} - \mu_n$ have exponential moments in a neighbourhood of $0$.

Unlike the sample mean and sample median examples, the pivot is difficult to compute explicitly in
this case. However, as we discuss in the beginning of Section \ref{sec:CLT}, the pivot is essentially
the CDF transform of the conditional law \eqref{eq:pivot:law}, which we can sample from. 
As discussed above, we can just take $\omega$ to be from a Logistic distribution.

Now we apply the above theory to logistic regression.

\subsubsection{Example: randomized logistic lasso}
\label{sec:randomized_logistic}

Suppose we observe independent samples, $d_i = (y_i, x_i) \iid \F$, where $y_i$'s are binary observations
and $x_i \in \real^p$. The ordinary logistic regression solves the following problem,
\begin{equation}
    \label{eq:logistic}
    \begin{aligned}
    \bar{\beta} &= \text{argmin}_{\beta \in \real^p} \ell(\beta) \\
    &= \text{argmin}_{\beta \in \real^p} -\left[\sum_{i=1}^n y_i \log \pi(x_i \beta) + (1-y_i) \log(1-\pi(x_i \beta))\right],
    \end{aligned}
\end{equation}
where $\pi(x) = \exp(x) / (1+\exp(x))$.
This is a nonparametric setting as we do not assume any parametric structure for $\F$. 

The randomized logistic lasso adds an $\ell_1$ penalty, a randomization term and a small quadratic term, 
\begin{equation}
    \label{eq:randomized_logistic}
    \hat{\beta} = \text{argmin}_{\beta \in \mathbb{R}^p} \frac{1}{\sqrt{n}}\ell(\beta) + \omega^T\beta + \|\Lambda\beta\|_1 + \frac{1}{2\sqrt{n}} \|\beta\|_2^2,
\end{equation}
where $\omega_j \iid \text{Logistic}(\kappa)$ is the perturbation to the gradient and $\Lambda$ is a diagonal matrix which introduces
(possibly) unequal feature weights, $\kappa$ controls the amount of randomization added. 
The addition of the quadratic term ensures that \eqref{eq:randomized_logistic} is strictly convex, thus has a unique solution. 
A similar formulation for linear regression has been proposed in \cite{stability_selection}.

Selective inference in this setting has not been considered before. Without the Gaussian assumptions
\cite{exact_lasso} does not apply. The parametric setting of this problem has been discussed in \cite{optimal_inference},
but computation of the selective tests are mostly infeasible for general $X$. Finally,
the asymptotic result by \cite{tian2015asymptotics} does not apply here as the framework require exactly affine selection
regions, which is not the case in this setting. 

Suppose the solution to \eqref{eq:randomized_logistic} has nonzero entry set $E$, then our target of inference $\beta_E^*$,
the unique population minimizer which satisfies
\begin{equation}
\label{eq:population:param}
\Ee_{\F} [X_E^T(y - \pi(X_E\beta_E^*))] = 0.
\end{equation}
Note that a parametric model $y_i | x_i \sim \text{Bernoulli}(\pi(x_{i,E}\beta_E^*))$ with independently
sampled $x_i$'s will have $\beta_E^*$ satisfying \eqref{eq:population:param}. But we by no means assume such
an underlying distribution. Rather, for any well-behaved distribution $\F$, $\beta_E^*$ can be thought of
of a statistical functional of the underlying distribution $\F$, depending on the outcome of selection $E$.

Selective inference in this setting is carried out
conditioned on $(E, s_E)$, the active set and its signs. We first introduce the following notations,
$$
\begin{aligned}
&\pi_E(\beta_E) = \frac{\exp(X_E\beta_E)}{1 + \exp(X_E\beta_E)}, \quad 
W_E(\beta_E) = \text{diag}(\pi_E(\beta_E)(1-\pi_E(\beta_E))), \\
&Q_E(\beta_E) = \frac{1}{n}X_E^TW_E(\beta_E)X_E, \quad 
C_E(\beta_E) = \frac{1}{n}X_{-E}^TW_E(\beta_E)X_E, \\
&D_E(\beta_E) = C_E(\beta_E) Q_E^{-1}(\beta_E)
\end{aligned}
$$
where $X$ is the feature matrix, and $X_E$, $X_{-E}$ is the columns
corresponding to the active set and inactive set respectively.
By law of large numbers, we have 
\begin{equation}
    \label{eq:matrix_approx}
\begin{aligned}
&Q_E(\beta_E^*) \overset{p}{\rightarrow} \Ee_{\F} Q_E(\beta_E^*) \overset{def}{=} Q, \quad
C_E(\beta_E^*) \overset{p}{\rightarrow} \Ee_{\F} C_E(\beta_E^*) \overset{def}{=} C,\\
&D_E(\beta_E^*) \overset{p}{\rightarrow} CQ^{-1}\overset{def}{=} D.
\end{aligned}
\end{equation}

Now we introduce our linearizable statistics and show that the conditioning event $(E, s_E)$
can be expressed as affine regions of these statistics.

\begin{lemma} 
\label{lem:logistic}
Suppose $E$ is the active set of the solution of \eqref{eq:randomized_logistic}, and we denote
$$
\bbeta_E = \argmin_{\beta_E \in \real^{E}} -\left[\sum_{i=1}^n y_i \log \pi(x_{i,E} \beta_E) + (1-y_i) \log(1-\pi(x_{i,E} \beta_E))\right]
$$
as the unpenalized MLE restricted to the selected variables $E$.

The following statistic $T$ is linearizable with asymptotic mean $(\beta_E^*, \rho)$ and variance $\Sigma/n$,
$$
T = \begin{pmatrix} 
\bbeta_E \\ \frac{1}{n} X_{-E}^T \left[y - \pi_E(\bbeta_E)\right]
\end{pmatrix} + R,
$$
where $R = o_p(n^{-1/2})$ is a small residual, and $\rho = \Earg{x_{i,-E}^T (y_i - \pi(x_{i,E}\beta_E^*))}$.
Moreover, the selection event $\{\hat{E}, z_{\hat{E}} = (E, s_E)\}$
can be characterized as the affine region $\{\sqrt{n}A_M T + B_M \omega \leq b_M\}$, where
$$
A_M = 
\begin{pmatrix}
    -S_E & 0 \\
    0 & I_{-E} \\
    0 & -I_{-E}
\end{pmatrix}, \quad
B_M = 
\begin{pmatrix}
S_EQ^{-1} & 0 \\
D & -I_{-E} \\
-D &  I_{-E}
\end{pmatrix},\quad
b_M =
\begin{pmatrix}
- S_E Q^{-1}\Lambda_E s_E\\
\lambda_{-E} - D \Lambda_E s_E\\
\lambda_{-E} + D \Lambda_E s_E\\
\end{pmatrix},
$$
where $I_{-E}$ denotes the identity matrix of $n-|E|$ dimensions and $\Lambda_E$, $\Lambda_{-E}$ denote
the active block and the inactive block of $\Lambda$ respectively and $\lambda$ is the
diagonal elements of $\Lambda$, $S_E = \diag(s_E)$.

\end{lemma}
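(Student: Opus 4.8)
The proof separates into the two claims, both resting on Taylor expansions of the logistic score $\beta\mapsto X^T(y-\pi(X\beta))$: around the population minimizer $\beta_E^*$ of \eqref{eq:population:param} for the linearizability claim, and around the restricted MLE $\bbeta_E$ for the affine-region claim, exploiting that $\frac1n X_E^T(y-\pi_E(\bbeta_E))=0$ by definition of $\bbeta_E$. For linearizability I would treat $\bbeta_E$ as a $Z$-estimator: consistency of $\bbeta_E$ for $\beta_E^*$ follows from strict concavity of the restricted log-likelihood together with \eqref{eq:population:param}, and a one-term expansion of its estimating equation, combined with $Q_E(\bbeta_E)\overset{p}{\to}Q$ and $C_E(\bbeta_E)\overset{p}{\to}C$ from \eqref{eq:matrix_approx}, gives $\bbeta_E=\beta_E^*+Q^{-1}\frac1n X_E^T(y-\pi_E(\beta_E^*))+o_p(n^{-1/2})$. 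Substituting this into $\frac1n X_{-E}^T(y-\pi_E(\bbeta_E))$ and expanding $\pi_E$ once more, using $D=CQ^{-1}$, rewrites the second block as $\frac1n\sum_i\bigl(x_{i,-E}^T(y_i-\pi(x_{i,E}\beta_E^*))-D\,x_{i,E}^T(y_i-\pi(x_{i,E}\beta_E^*))\bigr)+o_p(n^{-1/2})$. Stacking the two blocks exhibits $\xi_{i,n}$ as an average of independent summands with mean $(\beta_E^*,\rho)$ and covariance $\Sigma$, with $R$ the accumulated $o_p(n^{-1/2})$ remainder, which is bounded on the high-probability event that $\bbeta_E$ stays in a fixed neighbourhood of $\beta_E^*$; the moment hypothesis of Theorem \ref{thm:weak:conv} is exactly what keeps the quadratic remainders below $n^{-1/2}$.

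For the affine description, write $T=(T_1,T_2)$ for the two blocks above and $S_E=\diag(s_E)$. Uniqueness of the minimizer $\hat\beta$ of \eqref{eq:randomized_logistic}, guaranteed by the strictly convex $\frac1{2\sqrt n}\|\beta\|_2^2$ term, makes the event $\{\hat E=E,\, z_{\hat E}=s_E\}$ the same as solvability of the stationarity system $-\frac1{\sqrt n}X^T(y-\pi_E(\hat\beta_E))+\omega+\Lambda u+\frac1{\sqrt n}(\hat\beta_E,0)^T=0$ with $u_E=s_E$, $S_E\hat\beta_E>0$ and $\|u_{-E}\|_\infty\le 1$. The key step is to solve the active block for $\hat\beta_E$: since $\frac1{\sqrt n}X_E^T(y-\pi_E(\bbeta_E))=0$, expanding the active score around $\bbeta_E$ and inverting $Q_E(\bbeta_E)=Q+o_p(1)$ gives $\sqrt n(\hat\beta_E-\bbeta_E)=-Q^{-1}(\omega_E+\Lambda_E s_E)+O_p(n^{-1/2})$, so multiplying by $\sqrt n\,S_E$ turns $S_E\hat\beta_E>0$ into $-\sqrt n S_E T_1+S_E Q^{-1}\omega_E\le -S_E Q^{-1}\Lambda_E s_E$, which is the first row block of $(A_M,B_M,b_M)$. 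Feeding the same expansion into the inactive block $\Lambda_{-E}u_{-E}=\frac1{\sqrt n}X_{-E}^T(y-\pi_E(\hat\beta_E))-\omega_{-E}$ and expanding $\pi_E$ around $\bbeta_E$ reduces it to $\sqrt n T_2+D\omega_E-\omega_{-E}+D\Lambda_E s_E+O_p(n^{-1/2})$, and the two-sided constraint $-\lambda_{-E}\le\Lambda_{-E}u_{-E}\le\lambda_{-E}$ produces exactly the remaining two row blocks, all up to the $o_p(n^{-1/2})$ remainders already present in $T$.

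I expect the main obstacle to be the one-step expansion $\sqrt n(\hat\beta_E-\bbeta_E)=-Q^{-1}(\omega_E+\Lambda_E s_E)+O_p(n^{-1/2})$. One must first establish that $\hat\beta_E$ is itself consistent for $\beta_E^*$, so that expanding the randomized solution around $\bbeta_E$ is legitimate: the perturbation $\omega^T\beta$ is $O_p(1)$ while $\frac1{\sqrt n}\ell$ grows like $\sqrt n$, so a standard $M$-estimation argument gives consistency, but then extracting the remainder at the $O_p(n^{-1/2})$ rate requires controlling the quadratic Taylor term, of order $\|X_E(\hat\beta_E-\bbeta_E)\|^2$, through the moment conditions and a uniform lower bound on the smallest eigenvalue of $Q_E$ near $\beta_E^*$. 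Once those two expansions are in hand, the rest is bookkeeping: every discarded term is $O_p(n^{-1/2})$, hence absorbable into $R$ and negligible at the $n^{-1/2}$ precision at which Theorem \ref{thm:weak:conv} will be applied.
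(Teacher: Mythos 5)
Your proposal is correct and follows essentially the same route as the paper's proof: linearizability via a one-term expansion of the restricted score around $\beta_E^*$ with $Q_E,C_E$ replaced by their limits, and the affine description via the KKT stationarity system of \eqref{eq:randomized_logistic} expanded around $\bbeta_E$, using $\frac{1}{n}X_E^T(y-\pi_E(\bbeta_E))=0$ to solve the active block and then feeding that into the inactive block and the sign/subgradient inequalities. Your additional remarks on establishing consistency of $\hat{\beta}_E$ and controlling the quadratic remainders only make explicit steps the paper treats informally.
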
 
The proof of this lemma is also deferred to the appendix.

Thus using Lemma \ref{lem:asymptotic} and Lemma \ref{lem:berry}, we can conclude under local alternatives,
the pivot \eqref{eq:pivot} converges to $\Unif(0,1)$.
To test $H_{0j}:\beta_j^* = 0$, we take $\eta = e_j$, and sample 
$$
\eta^T T \mid V_{\eta}, ~ \sqrt{n}A_M T + B_M \omega \leq b_M, \quad (T, \omega) \sim N\left(\begin{pmatrix} \beta^*_E \\ \rho \end{pmatrix}, \frac{\Sigma}{n}\right) \times G,
$$
where $\rho = \Earg{x_{i,-E}^T (y_i - \pi(x_{i,E}\beta_E^*))}$. Since $\rho$ is the nuisance
parameters for testing $H_{0j},~ j \in E$, the conditional law above will not depend on its value.
A hit-and-run algorithm for sampling this law can be implemented. Moreover, recent development by
\cite{magic, harris2016selective} propose more general and efficient sampling schemes for this law.
For details, see for example Chapter 3.2 \cite{magic} where the sampling scheme
for this very example is considered and simulation results are provided.

In Lemma \ref{lem:logistic}, we assume the covariance matrix $\Sigma$ is known. 
In applications, we can bootstrap it. But is it valid to plug in the bootstrap estimate of $\Sigma$?

\subsection{Plugging in variance estimates}

\label{sec:plugin:variance}

In Section \ref{sec:median} we derived quantities that were asymptotically pivotal for the best median, up to an 
unknown variance.
In the sample median case, by \eqref{eq:median:decomp}, the variance of the sample median is approximately $[4nf(m)^2]^{-1}$, where
$f(m)$ is the PDF evaluated at the median $m$. A simple consistent estimator for $f(m)$ is to take $1/2 \pm \frac{1}{\sqrt{n}}$ quantiles $a_n$ and
$b_n$, then
\begin{equation}
    \label{eq:var_estimate}
f(m) \approx \frac{2}{\sqrt{n}(b_n-a_n)}
\end{equation}
is consistent for $f(m)$ based on which we get a consistent estimator for $\sigma_1^2$. 

More generally, computing the pivot \eqref{eq:pivot} requires knowledge of $\Sigma$. 
In practice, we usually do not have prior knowledge of the variance $\Sigma$ and need a consistent estimate for $\Sigma$. 
We might use a bootstrap or jackknife estimator. When $p$ is fixed, the bootstrap 
estimator is consistent and thus we get a consistent estimator $\hat{\Sigma}$.
Lemma \ref{lem:transfer} states that under moment conditions on the likelihood, $\hat{\Sigma}$
will be consistent for $\Sigma$ under $\F_n^*$ as well, justifying the plug-in estimator of $\Sigma$.

Figure \ref{fig:asymptotics} is some simulation results for the two-sample medians problem. In each case, we take the sample size
for each treatment group to be $500$, and generate the noise from a skewed distribution $N(0,1) + 0.5\Exp(1)$. We standardize it 
such that the noise has median $0$ under the null hypothesis. We use additive logistic noise with scale $\kappa=0.8$ for randomization. 
The better group is decided using the randomized sample median, and selective inference is carried out.
In Figure \ref{fig:pivot}, the pivot with plugin variance estimate $\hsigma$ 
in \eqref{eq:var_estimate} is plotted under both the null hypothesis
$H_0:\mu_{better} = 0$ and the $H_A:\mu_{better} > \frac{1}{\sqrt{n}}$. 
The pivot has reasonable power even for identifying local alternatives.
The pivot is almost exactly $\Unif(0,1)$ under the null hypothesis
with the sample size $n=500$. In fact, it is very close at a relatively small sample size $n=50$ justifying the application of
asymptotics in the nonparametric setting. Figure \ref{fig:hist} further illustrates the difference in the unselective v.s. 
selective distribution and its convergence to its theoretical limit. We see
that there is a clear shift in selective distribution that calls for adjustment
for the selection.  For sample size $n=500$, the empirical selective
distribution converges to our theoretical distribution. 

\begin{figure} 
    \centering
    \begin{subfigure}[t]{0.45\textwidth}
                \includegraphics[width=\textwidth]{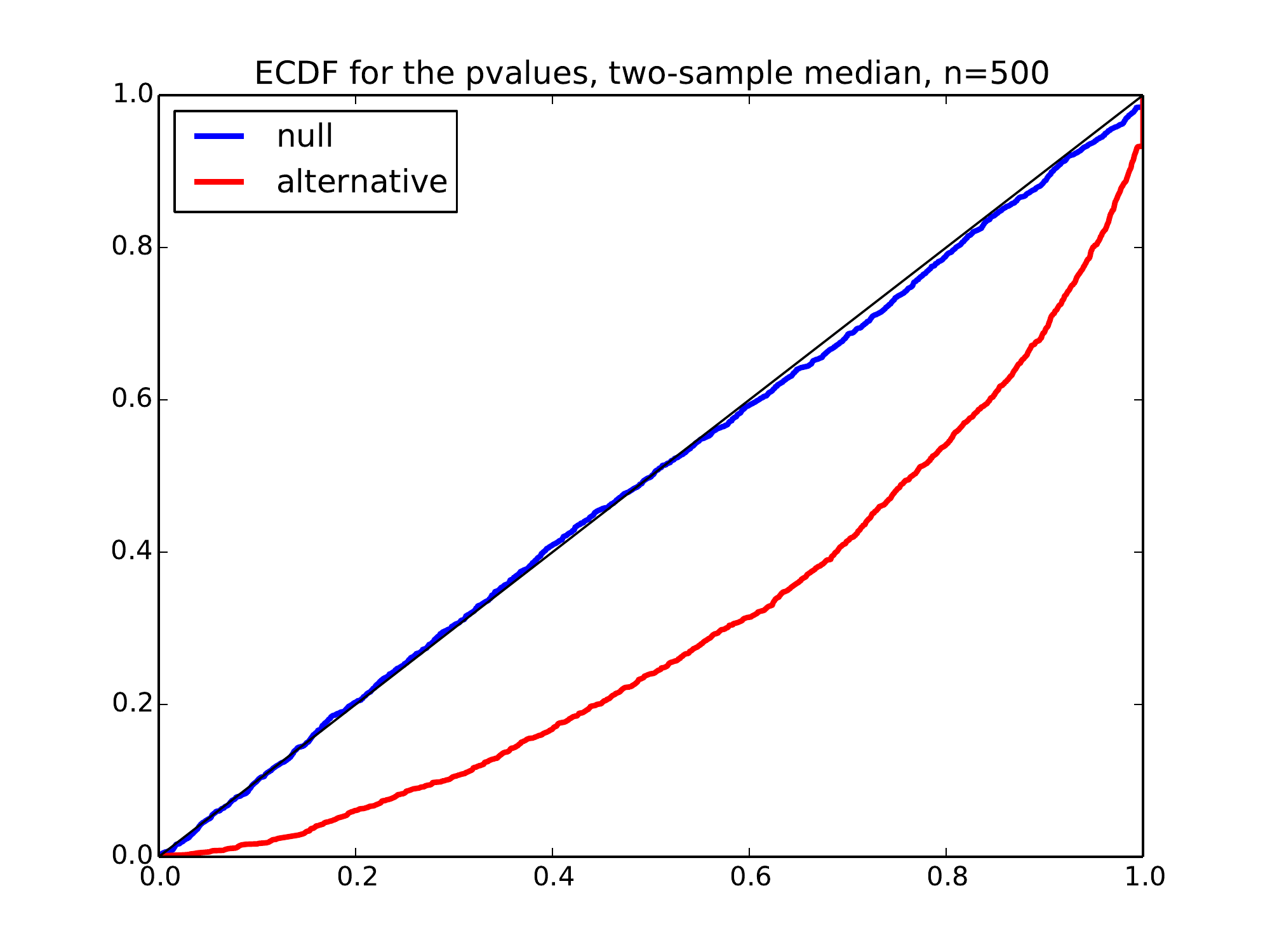}
                \caption{Null and alternative pivot for the ``better'' median}
                \label{fig:pivot}
        \end{subfigure}%
        ~ 
        \begin{subfigure}[t]{0.45\textwidth}
                \includegraphics[width=\textwidth]{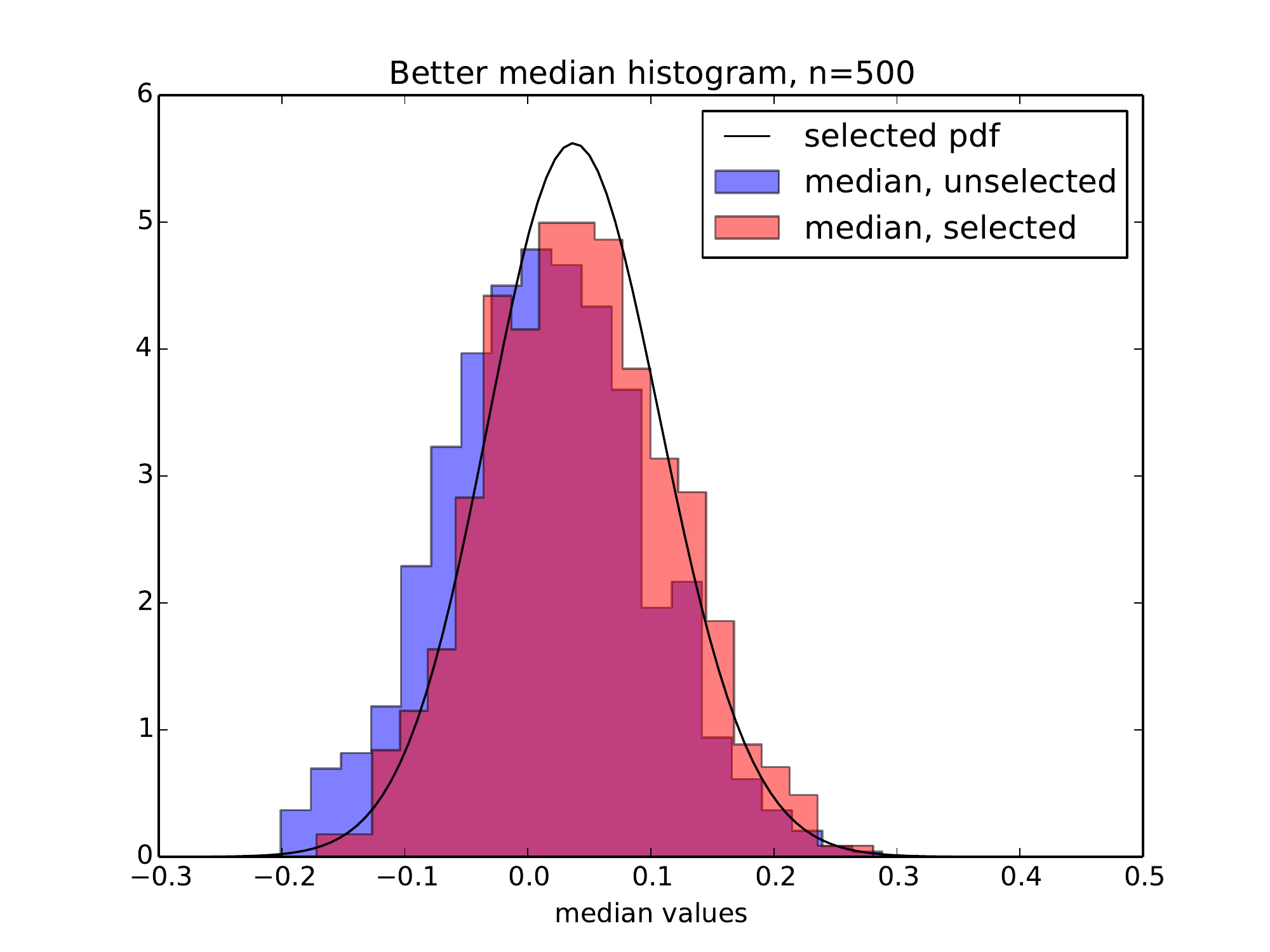}
                \caption{Selective v.s. unselective distribution, and theoretical PDF}
                \label{fig:hist}
        \end{subfigure}
        \caption{Asymptotic distribution of the median for the selected group}\label{fig:asymptotics}
\end{figure}

\section{Multiple Randomizations of the Data} 
\label{sec:multiple}

Most of the examples above focus on a single randomization $\omega$ on the data,
which we use for model selection. We naturally want to extend it to multiple
randomizations, and multiple randomized selections, which will collectively
suggest a model for inference. In this section, we allow multiple randomizations
in a possibly sequential fashion and discuss how inference can be carried out. 

\subsection{Selective inference after cross-validation}

Consider the case where we first choose a regularization parameter by cross-validation,
and then fit the square-root LASSO problem \cite{sqrt_lasso} at this parameter,
\begin{equation}
\label{eq:sqrt:lasso}
\hat{\beta}_{\lambda}(y;X) = \argmin_{\beta} \|y-X\beta\|_2 + \lambda \|\beta|_1,
\end{equation}
where $\lambda$ is picked from a fixed grid $\Lambda=[\lambda_1, \dots, \lambda_k]$. 
The discussion below is not specific to selection by square-root LASSO.

The model selected by cross-validated square-root LASSO involves two steps of
selection. We denote by $y_{\CV}$ the response for selecting the randomization
parameter, and $y_{\select}$ the response vector for fitting the square-root
LASSO at the selected regularization parameter $\lambda$. Both vectors are
randomized version of the original vector $y$. Inference after cross validation
requires combining two steps of randomized selection. Consider the following procedure. 

First, we randomize $y$ to get the vector $y_{\CV}$ and $y_{\select}$
\begin{equation}
\label{eq:randomize:CV}
\begin{aligned}
y_{\text{inter}}  & \vert y, X & \sim N(y, \sigma^2_1 I) \\
y_{\text{CV}}  & \vert y_{\text{inter}}, y, X & \sim N(y_{\text{inter}}, \sigma^2_{2,\text{CV}}I) \\
y_{\text{select}}  & \vert y_{\text{inter}}, y, X & \sim N(y_{\text{inter}}, \sigma^2_{2,\text{select}} I). 
\end{aligned}
\end{equation}
Note the intermediate vector $y_{\inter}$ is introduced convenience of sampling. The above
is just one of the plausible randomization schemes.

After having randomized, we select $\lambda$ with $K$-fold cross-validation using 
$y_{\text{CV}}$:
\begin{equation}
\label{eq:selected:lambda:CV}
\hat{\lambda} = \hat{\lambda}(y_{\text{CV}}, X) = \text{argmin}_{\lambda \in \Lambda} CV_K
(y_{\text{CV}}, X,\lambda)
\end{equation}
where $CV_K(y,X,\lambda)$ is the usual $K$-fold cross-validation score with coefficients
estimated by the square-root LASSO. Alternatively, one could compute the cross-validation
score using the OLS estimators of the selected variables. 
Note that we have left implicit the randomization
that splits observations into groups. That is $\hat{\lambda}$ in \eqref{eq:selected:lambda:CV} above is a function of $(y_{\text{CV}}, X, \omega)$ where
$\omega$ is a random partition of $\{1, \dots, n\}$ into $K$ groups. When we sample $y_{\text{CV}}$ below, we redraw $\omega$ each time.

The subset of variables and signs is selected using the square-root LASSO with response
$y_{\text{select}}$:
\begin{equation}
\label{eq:selected:var:CV}
\begin{aligned}
\hat{E}(y_{\text{CV}},y_{\text{select}},X) &=
\left\{j: \hat{\beta}_{\hat{\lambda}(y_{\text{CV}},X),j} \neq 0 \right\},\\
z_{\hat{E}}(y_{\CV}, y_{\select}, X) &= \sign(\hat{\beta}_{\hat{\lambda}(y_{\CV},X)}).
\end{aligned}
\end{equation}

After seeing the selected variables $\hat{E}$, we perform inference in the selected model $M_{sel}(\hat{E})$.
Since $M_{sel}(\hat{E})$, we will still have an exponential family after selection. Per Lemma \ref{lem:exponential:family},
we sample from the following law,
$$
{\cal L} \left(X_j^Ty \bigl \vert \hat{\lambda}(y_{\text{CV}}, X)=\lambda, (\hat{E}, z_{\hat{E}}) = (E,z_E), P_{E\backslash j} y\right).
$$
The additional conditioning on the signs are for computational reasons. In fact, recent development in \cite{harris2016selective}
proposes sampling schemes that overcome these difficulties, so that we do not need to condition on this additional information.

To sample from the above law, we use a Gibbs-type sampler, which iterate over $y$, $y_{\inter}$, $y_{\CV}$ and $y_{\select}$,
conditional on the other three and the selection event. It includes the following steps.
\begin{description}
\item[Sampling $y_{\text{CV}}$] Using the conditional independence of $y_{\CV}$ and $y_{\select}$ given $y_{\inter}$, we have 
$$
{\cal L} \left(y_{\text{CV}} \bigl \vert y_{\text{inter}}, y_{\text{select}}, y, \hat{\lambda}(y_{\text{CV}},X)=\lambda \right) =   N(y_{\text{inter}}, \sigma^2_{2,\text{CV}} I) \bigl | \left\{ \hat{\lambda}(y_{\text{CV}},X)=\lambda \right\}.
$$
This is the computational bottleneck, as we do not have good description for the selection
event for cross validation. A brute-force sampling scheme will be computationally expensive,
as we need to refit the model over a grid of $\lambda$'s. Thus, we do not update $y_{\CV}$ too often. 
\item[Sampling $y_{\text{select}}$] 
The conditional independence of $y_{\select}$ and $y_{\CV}$ given $y_{\inter}$ implies,
$$
\begin{aligned}
&{\cal L} \left(y_{\text{select}} \bigl \vert y_{\text{inter}}, y_{\text{CV}}, y,
(\hat{E},z_{\hat{E}})(y_{\text{CV}},y_{\text{select}},X)=(E,z_E), 
   \hat{\lambda}(y_{\text{CV}},X)=\lambda \right) \\
 &=  N(y_{\text{inter}}, \sigma^2_{2,\text{select}} I) \bigl | \vert \left\{ (\hat{E}_{\lambda},z_{\hat{E},\lambda})(y_{\text{select}},X) = (E,z_E)\right\} 
\end{aligned}
$$ 
\cite{tian2015selective} has given an explicit description of the selection
event 
$$\{\hat{E}_{\lambda}, z_{\hat{E}, \lambda} = (E, z_E)\}.$$
Thus hit-and-run sampling provides a tractable sampling scheme.

\item[Sampling $y_{\text{inter}}$]
This is a simple step. Because the selection event is based on $y_{\CV}$ and $y_{\select}$, we have
$$
\begin{aligned}
&{\cal L}\left(y_{\text{inter}} \bigl \vert y, y_{\text{select}}, y_{\text{CV}} \right) \\
=& N\left(\frac{\frac{1}{\sigma^2_1}y + \frac{1}{\sigma^2_{2,\text{CV}}}y_{\text{CV}}+ \frac{1}{\sigma^2_{2,\text{select}}}y_{\text{select}}}{\frac{1}{\sigma^2_1} + \frac{1}{\sigma^2_{2,\text{CV}}}+ \frac{1}{\sigma^2_{2,\text{select}}}}, \left(\frac{1}{\sigma^2_1} + \frac{1}{\sigma^2_{2,\text{CV}}} + \frac{1}{\sigma^2_{2,\text{select}}} \right)^{-1} \right).
\end{aligned}
$$ 
\item[Sampling $y$]
This is also simple with our randomization scheme. Note that $y$ is conditionally independent of $y_{\select}$
and $y_{\CV}$ given $y_{\inter}$,
$$
{\cal L}\left(y \bigl \vert y_{\text{inter}}, y_{\text{CV}}, y_{\text{select}}\right) = 
{\cal L}\left(y \bigl \vert y_{\text{inter}}\right) = N \left(\frac{\frac{1}{\sigma^2} X_E\beta_E
+ \frac{1}{\sigma^2_1}y_{\text{inter}}}{\frac{1}{\sigma^2} 
+ \frac{1}{\sigma^2_1}}, \left( \frac{1}{\sigma^2} 
+ \frac{1}{\sigma^2_1} \right)^{-1} \right) 
$$
Since we condition on $P_{E\backslash j}y$, we essentially take $y$ and project
out the update on the space orthogonal to that of $X_j$.
\end{description}

A chain that iterates through the above four steps will give us samples from
the desired distribution for inference.

\subsection{Collaborative selective inference}
One of the motivations of the reusable holdout described in \cite{reusable_holdout} is that it allows a data analyst to repeatedly
query a database yet still be able to approximately estimate expectations even after asking many questions about the data. 
Another version of this model
may be that several groups wish to model the same data and then, as a consortium, decide on a final model and be able to
approximately estimate expectations in this final model. We might call this {\em collaborative selective inference}.

\newcommand{\bankS}{{\cal B}}
\newcommand{\bankP}{\mathbb{B}}

Formally, suppose each of $L$ groups has its own preferred method of model selection, encoded
as selection procedures $(\mQ_l)_{1 \leq  l \leq L}$. We assume there is a central ``data'' bank that decides
what ``data'' each group is allowed to see. We express this is as a sequence of randomization schemes $(y^*_l)_{1 \leq l \leq L}$.
Formally, this is equivalent to enlarging the probability space to $\cD \times \bankS$ with measure $\F \times \bankP$ and fixing a function
$y^*(y,\omega) = (y^*_1(y,\omega), \dots, y^*_l(y,\omega))$. It may be desirable to choose the law of $y^*|y$ so that the coordinates
are conditionally independent given $y$, though it is not necessary. 

Now suppose that
the $L$ groups choose models $\hat{M}_l^* = \mQ_l(y^*_l) \in \sigma(y^*_l)$ and convene to discuss what the best model
is $M$. For every choice of $L$ models $(M_1,\dots, M_L)$ and final model $M$, the following selective distribution can be used for valid selective inference 
\begin{equation}
\frac{d\F^*}{d\F}(y) = \frac{\bankP(\omega: \cap_{l=1}^L\mQ_l(y^*_l(y,\omega)) = M_l)}
{(\F \times \bankP)(\cap_{l=1}^L \mQ^*_l = M_l)}.
\end{equation}
When the $y^*_l$'s are conditionally independent given $y$ then it is clear that 
$$
\bankP( \cap_{l=1}^L\mQ_l(y^*(y,\omega)) = M_l) = \prod_{l=1}^L \bankP(\mQ_l(y^*_l(y,\omega))=M_l).
$$
It is possible that the consortium has beforehand decided on an algorithm that will choose a best model automatically, determined by some function ${\cal S}(M_1, \dots, M_L)$.
In this case, one should use the selective distribution
\begin{equation}
\frac{d\F^*}{d\F}(y) = \frac{\bankP(\omega : {\cal S}(M^*_1(y,\omega), \dots, M^*_L(y,\omega)) = M)}
{(\F \times \bankP)({\cal S}(M^*_1, \dots, M^*_L) = M)}
\end{equation}
When the models in question are parametric, perhaps Gaussian distributions, and the 
randomization is additive Gaussian noise the central data bank can explicitly lower bound the leftover information  by
$$
\text{Var}(y | y^*_1, \dots, y^*_L).
$$
This quantity is expressible in terms of the marginal variance of $y$ and the central data bank's noise generating distribution 
for $y^*(y,\omega)=(y+\omega_1, \dots, y+\omega_L)$. 
By maintaining a lower bound on the above quantity, the central data bank can
maintain a minimum prescribed information in the data for final estimation
and/or inference. 
In a sequential setting, where valid inference is desired at
each step, maintaining a lower bound may involve releasing noisier and noisier
versions of $y$. Sampling under this scheme seems quite difficult, and we leave
it as an area of interesting future research.

\section{Proof}
\label{sec:proof}

\subsection{Proof of Theorem \ref{thm:weak:conv}}
To prove Theorem \ref{thm:weak:conv}, we first prove the following lemma, which
might be of independent interest.

\begin{lemma}
\label{lem:ext:sourav}
Suppose $T_n$ is a linearizable statistic for $\mu_n = \mu(\F_n)$ as defined in \eqref{eq:decomposable}.
Let $Z_n = \sqrt{n}(T_n - \mu_n) \in \real^p$ and a function $f: \real^p \to \real$ with
finite $\lambda_3^{\Omega}(f)$ for some norm $\Omega$ on $\real^p$. Moreover, if $\F_n$ has
finite centered exponential moments in a neighbourhood of zero.
Then
$$
|\Esubarg{\F_n}{f(Z_n)} -\Esubarg{\Phi_n}{f(Z_n)}| \leq C(p)\lambda_3^{\Omega}(f)n^{-\frac{1}{2}}, \quad n \geq n_0
$$ 
for some $n_0 \geq 1$, where $C(p)$ is a constant only dependent on the dimension.
\end{lemma}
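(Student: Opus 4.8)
The plan is to adapt Chatterjee's Lindeberg-style swapping argument (Theorem 1.1 of \cite{chatterjee2005simple}) to the case where the test function $f$ has derivatives that grow exponentially, controlled through $\lambda_3^{\Omega}(f)$. Write $T_n = \frac1n\sum_i \xi_{i,n} + R$ with $R = o_p(n^{-1/2})$, so that $Z_n = \sqrt n(T_n-\mu_n) = \frac{1}{\sqrt n}\sum_{i=1}^n (\xi_{i,n}-\mu_n) + \sqrt n R$. Let $U_i = (\xi_{i,n}-\mu_n)/\sqrt n$ and let $V_i$ be independent mean-zero Gaussian vectors with $\Cov(V_i)=\Sigma_n/n$, so that $\sum_i V_i \sim N(0,\Sigma_n)$ has the law of the Gaussian surrogate $\Phi_n$ (up to the negligible $\sqrt n R$ term, which I would handle separately at the end using that $R$ is bounded a.s.\ and $o_p(n^{-1/2})$, together with the exponential moment bound to control $f$ on the rare event that $Z_n$ is large). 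Define the hybrid sums $W_k = \sum_{i<k} V_i + \sum_{i>k} U_i$ and telescope
\[
\Esubarg{\F_n}{f(Z_n)} - \Esubarg{\Phi_n}{f(Z_n)} = \sum_{k=1}^n \Earg{f(W_k+U_k) - f(W_k+V_k)}.
\]

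For each term I would Taylor-expand $f$ around $W_k$ to second order with third-order remainder. The zeroth- and first-order terms vanish because $\Earg{U_k}=\Earg{V_k}=0$ and $U_k,V_k$ are independent of $W_k$; the second-order terms match because $\Cov(U_k)=\Cov(V_k)=\Sigma_n/n$ and again by independence. So each summand is bounded by a third-derivative term of the form $\frac16\Earg{\|\partial^3 f(W_k+\theta U_k)\|\,\|U_k\|_1^3} + (\text{same with } V_k)$. Here is where $\lambda_3^{\Omega}(f)$ enters: by definition $\|\partial^3 f(s)\| \leq \lambda_3^\Omega(f)\,\exp(3\Omega(s))$, so I need to bound $\Earg{\exp(3\Omega(W_k+\theta U_k))\,\|U_k\|_1^3}$. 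Using that $\Omega$ is a norm on $\real^p$ (hence $\Omega(a+b)\le \Omega(a)+\Omega(b)$ and $\Omega(a)\le c_\Omega\|a\|_1$ for a dimension/norm constant), split the exponential and apply Cauchy--Schwarz: the factor $\exp(3\Omega(W_k))$ is controlled by a moment generating function bound on $W_k$, which follows from the uniform centered exponential moment assumption \eqref{eq:exp_moment} on $\xi_{i,n}$ (products of independent MGFs, each within $1+O(1/n)$, stay bounded uniformly in $n$); the factor $\exp(3\Omega(\theta U_k))\|U_k\|_1^3$ is $O(n^{-3/2})$ in expectation since $U_k = O(n^{-1/2})$ and again has a bounded MGF. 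Summing $n$ terms each of order $n^{-3/2}$ gives the claimed $O(n^{-1/2})$ bound, with the constant depending only on $p$ (through $c_\Omega$ and the constants $C_1,C_2$ from the hypotheses, though in the lemma statement these are absorbed since $f$ and its $\lambda_3^\Omega$ are given).

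The main obstacle I anticipate is making the exponential-moment bookkeeping uniform and honest: unlike the bounded-derivative setting of the original Lindeberg swap, here the third-derivative bound and the increment are correlated through $W_k$, and one must be careful that the Gaussian surrogate increments $V_k$ also satisfy the requisite exponential moment control (they do, being Gaussian with variance $O(1/n)$) and that the MGF of the partial sum $W_k$ is bounded \emph{uniformly in $k\le n$ and in $n$} — this is exactly why the hypothesis is a \emph{uniform} exponential moment bound and why the conclusion only holds for $n\ge n_0$ (one needs $n$ large enough that the per-term MGF corrections $\exp(O(1/n))$ multiply to something bounded). A secondary technical point is absorbing the remainder term $\sqrt n R$: since $R = o_p(n^{-1/2})$ and $R$ is bounded with probability one, $\sqrt n R \to 0$ in probability and stays bounded, so $f(Z_n)$ and $f(Z_n - \sqrt n R)$ differ by at most $\sup\|\partial f\|\cdot|\sqrt n R|$ on the bounded region and the contribution from the tail is killed by the exponential moments; I would state this as a short lemma or fold it into the last display. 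I would double-check whether the $o_p$ rate on $R$ suffices or whether one genuinely needs the sharper $O(n^{-3/4}\log n)$-type bounds that show up in the median example — I expect $o_p(n^{-1/2})$ combined with a.s.\ boundedness is enough for the weak statement here, but that is the spot where I would be most careful.
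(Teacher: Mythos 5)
Your proposal is correct and follows essentially the same route as the paper's proof: a Lindeberg/Chatterjee swapping argument with leave-one-out hybrid sums, third-order Taylor expansion in which the mean and covariance terms cancel, factorization of the remainder via independence of the increment and the hybrid sum, and MGF bounds from the uniform exponential moment condition to sum $n$ terms of order $n^{-3/2}$. The only difference is that you treat the residual $\sqrt{n}R$ explicitly (and rightly flag it as delicate), whereas the paper simply sets it to zero ``without loss of generality.''
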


Lemma \ref{lem:ext:sourav} can be seen as an extension of the result by \cite{chatterjee2005simple} in the sense that
the author in \cite{chatterjee2005simple} established result for the case $\Omega = 0$. The proof is also an adaptation
of the technique in \cite{chatterjee2005simple}. 

\begin{proof}
Without loss of generality, we assume $T_n = \frac{1}{n}\sum_{i=1}^n \xi_{i,n}$ and the residual is $0$.
First, we define the normalizing operator. For any $S \in \real^{n \times p}$,
$$
\normalize(S) = \frac{1}{\sqrt{n}}\sum_{i=1}^n S[i] \in \real^p,
$$
where $S[i]$ is the $i$-th row of $S$.

We also define for any $n$ and $0 \leq k \leq n$,
$$
S_{n, k} = 
\begin{pmatrix}
    \xi'_{1,n} - \mu(\F_n),\\
    \vdots, \\
    \xi'_{k-1,n} - \mu(\F_n), \\
    \xi'_{k,n} - \mu(\F_n), \\
    \xi_{k+1,n} - \mu(\F_n),\\
    \vdots, \\
    \xi_{n,n}- \mu(\F_n)
\end{pmatrix} 
\quad
S_{n, k}^- = 
\begin{pmatrix}
    \xi'_{1,n}- \mu(\F_n),\\
    \vdots, \\
    \xi'_{k-1,n}- \mu(\F_n), \\
    0\\
    \xi_{k+1,n}- \mu(\F_n),\\
    \vdots, \\
    \xi_{n,n}- \mu(\F_n)
\end{pmatrix}
$$ 
where $\xi_{i, n} \iid \F_n$ with mean $\mu(\F_n)$ and variance $\Sigma(\F_n)$ 
and $\xi'_{i, n} \iid N(\mu(\F_n), \Sigma(\F_n))$. Let $F_{n,k}$. $F^-_{n,k}$ denote the 
distribution of $S_{n,k}$ and $S^-_{n,k}$'s respectively. Note $F_{n,k}$ and
$F_{n,k}^-$ are determined by $\F_n$. For simplicity of notation we only distinguish
the two distributions by $F_{n,k}$ and $F^-_{n,k}$, avoiding verbose notations of $S$,
e.g. $\Ee_{F_{n,k}}[S] = \Ee_{F_{n,k}}[S_{n,k}]$. It is then easy to see $Z_n = \normalize(S_{n,0})$.

Now by telescoping:
    $$
\begin{aligned}
    &\big|\Ee_{\F_n}[f] - \Ee_{\Phi_n}[ f] \big|  
    =\big|\Ee_{F_{n,0}}[f \circ \normalize(S)] - \Ee_{F_{n,n}}[f \circ \normalize(S)] \big| \\
\leq& \sum_{i=1}^n \big| \Ee_{F_{n,i-1}}[f\circ \normalize(S)] - \Ee_{F_{n,i}^-}[f\circ \normalize(S)] 
+
\Ee_{F_{n,i}^-}[f\circ \normalize(S)] - \Ee_{F_{n,i}}[f\circ \normalize(S)] \big|.
\end{aligned}
    $$

Let $\partial_i$ be the derivative with respect to the $i$-th row $S[i]$. Using Taylor's expansion
at $S_{n,k}^-$, we have
    $$
    \begin{aligned}
 &\Ee_{F_{n,i}}[f\circ\normalize] - \Ee_{F_{n,i}^-}[f\circ\normalize] \\
  =&  \frac{1}{\sqrt{n}}\Ee_{F_{n,i}^-} \left[\partial_i f\circ \normalize(S)\right]^T 0 + 
   \frac{1}{n}\text{Tr}\left[\Ee_{F_{n,i}^-} \left(\partial_i^2 f\circ\normalize(S)\right) \cdot \Sigma(\F_n)\right] + R_{n,i} 
    \end{aligned}
    $$
    where the precise form of the Taylor remainder $R_{n,i}$ depends
on realizing the laws $F_{n,i}$ and $F_{n,i}^-$ on the same probability space.
In order to not introduce new notation, we have avoided explicitly writing out this construction, directing
readers to \cite{chatterjee2005simple} for details. Nevertheless,
$$
\begin{aligned}
    |R_{n,i}| \leq c_1(p) [\lambda_3^{\Omega}(f) \cdot n^{-\frac{3}{2}}]
    \E_{F_{n,i}^-}\left[\exp\left(\Omega(\normalize(S)) + \frac{1}{\sqrt{n}}\Omega(\xi_{i,n}^0)\right) \|\xi^0_{i,n}\|_1^3 \right]
\end{aligned}
$$
where $\xi_{i,n}^0$ are centered version of $\xi_{i,n}$ 
and $c_1$ is some dimension dependent constant.

Let $C(\Omega)$ be the constant s.t $\Omega(\cdot) \leq C(\Omega) \|\cdot\|_1$, $C(\Omega)$ only depends on the dimension $p$.
    Thus, using the independence of the $\xi_{i,n}$'s,
    $$
    \begin{aligned}
    &\E_{F_{n,i}^-}\left[\exp\left(\Omega(\normalize(S)) + \frac{1}{\sqrt{n}}\Omega(\xi_{i,n}^0)\right)\|\xi^0_{i,n}\|_1^3 \right] \\
    \leq& 
    \Ee_{F_{n,i}^-} \left[\exp\left( C(\Omega) \|\normalize(S)\|_1\right)\right] \cdot \Ee_{\F_n} \left[\|\xi_{i,n}^0\|_1^3\exp\left(\frac{C(\Omega)\|\xi_{i,n}^0\|_1}{\sqrt{n}}\right)\right].
    \end{aligned}
    $$

Now we bound these two expectations. By the exponential moment condition \eqref{eq:exp_moment} and Lemma \ref{lem:mgf}, it is easy to
conclude the first term is bounded by
$$
\limsup_n \Ee_{\F_n} \left[\exp\left( C(\Omega) \|Z_n\|_1\right)\right] \leq c_2(p).
$$
The second expectation is bounded by $\gamma$, an upper bound on the third moment of $\xi_{i,n}^0$,
$$
\limsup_n \Ee_{\F_n} \left[\|\xi_{i,n}^0\|_1^3\exp\left(\frac{C(\Omega)\|\xi_{i,n}^0\|_1}{\sqrt{n}}\right)\right] \leq \gamma,
$$
Thus it is not hard to see
$$
|R_{n,i}| \leq c_1(p) c_2(p) \gamma \lambda_3^{\Omega}(f) n^{-\frac{3}{2}}.
$$

Notice the first and second order terms in $ \Ee_{F_{n,i}}[f\circ\normalize] - \Ee_{F_{n,i}^-}[f\circ\normalize]$ cancel with those in $\Ee_{F_{n,i-1}}[ f\circ\normalize] - \Ee_{F_{n,i}^-}[f\circ\normalize]$, and therefore we have,
    $$
    \big| \Ee_{\F_n}[ f] - \Ee_{\Phi_n}[ f] \big| \leq \sum_{i=1}^n (R_{n,i} + \tilde{R}_{n,i})    $$
    where $\tilde{R}_{n,i}$ is the remainder of $\Ee_{F_{n,i-1}}[ f\circ\normalize] - \Ee_{F_{n,i}^-}[f\circ\normalize]$.
With a similar argument
$$
|\tilde{R}_{n,i}| \leq c_1(p) c_2(p) \gamma \lambda_3^{\Omega}(f) n^{-\frac{3}{2}},
$$
and summing over $n$ terms, we have the conclusion of the lemma.
\end{proof}

Now we prove the main theorem, Theorem \ref{thm:weak:conv}.

\begin{proof}

First, notice that per Lemma \ref{lem:exact}, we have
$\Esubarg{\Phi_n^*}{g \circ P(T_n)} = \int_0^1 g(x) dx$. Using the selective likelihood ratio, it
is easy to see, 
$$
\Ee_{\F^*_n} \left[g(P(\stat_n))\right] = \Ee_{\F_n}\left[g(P(\stat_n)) \ell_{\F_n}(\stat_n)\right]
$$
The same equation holds for $\Phi_n = \Phi(\F_n)$, thus we have
\begin{equation}
    \label{eq:difference}
\begin{aligned}
&\left|
\Ee_{\F^*_n} \left[g(P(\stat_n))\right] - \Ee_{\Phi^*_n} \left[g(P(\stat_n))\right] \right| \leq  \\
& \quad \left| \Ee_{\F_n}\left[g(P(\stat_n)) \ell_{\Phi_n}(\stat_n)\right] -  \Ee_{\Phi_n}\left[g(P(\stat_n)) \ell_{\Phi_n}(\stat_n)\right] \right| + \\
& \quad \left| \Ee_{\F_n}\left[g(P(\stat_n)) \ell_{\F_n}(\stat_n)\right] -  \Ee_{\F_n}\left[g(P(\stat_n)) \ell_{\Phi_n}(\stat_n)\right] \right|  \\
\end{aligned}
\end{equation}

We need to bound both terms. Recall the notation $\bP_n$ and $\bL_{\F_n}$ for the normalized statistic $Z_n$. 
If we let $f = g(\bar{P}_n) \cdot \bar{\ell}_{\Phi_n}$, then per Lemma \ref{lem:ext:sourav}, we have
$$
|\Ee_{\F_n} [g(\bar{P}_n) \cdot \bL_{\Phi_n}] - \Ee_{\Phi_n} [g(\bar{P}_n) \cdot \bL_{\Phi_n}] | \leq 2  C(p) \cdot C_1 n^{-1/2},
$$
where we use the bound in condition \eqref{eq:derivatives}. Now we replace
$\ell_{\Phi_n}$ with $\ell_{\F_n}$ in the second term. With some algebra,
we can bound it by
$$
\Ee_{\F_n}\left[g(P(\stat_n)) \ell_{\F_n}(\stat_n)\right] 
\left| 1 - \frac{\Psubarg{\F_n \times \Q}{M \in \mQ^*(T_n, \omega)}}{\Psubarg{\Phi_n \times \Q}{M \in \mQ^*(T_n, \omega)}} \right|,
$$
which in turn is bounded by 
$$
C(g) \frac{\Pp_{(\F_n \times \Q)}[M \in \mQ^*] - \Pp_{(\Phi_n \times \Q)}[M \in \mQ^*]}{\Pp_{(\Phi_n \times \Q)}[M \in \mQ^*]}
\leq C(g) C_3 n^{-1/2},
$$
per condition \eqref{eq:rare} and $C(g)$ is a bound on $g$.
\end{proof}

\subsection{Proof of Lemma \ref{lem:exact}}
\begin{proof}
    Let $\phi_{\mu, \frac{1}{n}\Sigma}$ denote the density for $N(\mu, \frac{1}{n}\Sigma)$ 
    and $T = \frac{1}{\sigma_{\eta}^2}\Sigma\eta \cdot (\eta^T T) + V_{\eta}$, we see that $\Q(\eta^T T, V_{\eta})$ is $\modelweight(M; T)$ in \eqref{eq:selective_dbn}.
    Thus under the selective law $\F^*$, the distribution of $T$ has density proportional to 
    $$
    \phi_{\mu, \frac{1}{n} \Sigma}(T) \cdot \Q(\eta^T T, V_{\eta}).
    $$
    Since $\eta^T T \perp V_{\eta}$ under $\F$, we can factorize $\phi_{\mu, \frac{1}{n}\Sigma}(T)$ into the product of densities of $\eta^T T$ and $V_{\eta}$. Thus 
    conditioning on $V_{\eta}$, the density of $\eta^T T$ is proportional to 
    $$
    \exp\left[-\frac{n(\eta^T T - \eta^T \mu)^2}{2\sigma_{\eta}^2}\right] \cdot \Q(\eta^T T, V_{\eta}).
    $$
    Therefore, the pivot in \eqref{eq:pivot} is the survival function of $\eta^T T$ under $\F^*$ and is distributed as $\Unif(0,1)$. Moreover, we note the 
    distribution does not depend on the conditioned value of $V_{\eta}$, thus $P(T; \eta^T \mu, \Sigma)$ in \eqref{eq:pivot} is $\Unif(0,1)$.
\end{proof}

\bibliographystyle{agsm}
\bibliography{randomized_response}

\newpage
\appendix 
\section{Proof of Lemma \ref{lem:counter:eg}}
\begin{proof}
First we normalize the sample mean as $Z = \sqrt{n}(\bar{X}_n + 0.5)$ and rewrite the pivot as
$$
\tP(Z) = \frac{1 - \Phi(Z)}{1 - \Phi(2 + \frac{1}{2}\sqrt{n})}, \quad Z > 2 + \frac{1}{2}\sqrt{n},
$$
and $\Phi$ is the CDF of the standard normal distribution.
As $n \to \infty$, we can use Mills ratio to approximate the normal tail. Specifically, denote $b_n = \frac{1}{2}\sqrt{n} + 2$, 
\begin{equation}
\label{eq:pivot:approx}
\begin{aligned}
&\frac{1 - \Phi(Z)}{1 - \Phi(b_n)} \approx \frac{b_n}{Z} \exp\left[\frac{1}{2}(b_n+Z)(b_n -Z)\right]\\
=& \frac{b_n}{Z} \exp\left[-\frac{1}{2}(b_n-Z)^2\right] \exp\left[-b_n(Z - b_n)\right], \quad Z > b_n.
\end{aligned}
\end{equation}
We study the behavior of $-b_n(Z - b_n)$ for $Z > b_n$. By studying its distribution, we will
also see that $Z-b_n \overset{p}{\to} 0$, for $Z > b_n$, thus the term
$$
R_n = \frac{b_n}{Z} \exp\left[-\frac{1}{2}(b_n-Z)^2\right] \overset{p}{\to} 1, \text{ as } n \to \infty.
$$

Now we study the distribution of $b_n(Z - b_n)$ conditioning on $Z > b_n$. Since $\bar{X}_n$ is a translation
of a binomial distribution divided by $n$, we can rewrite $Z$ in terms of a Binomial distribution, which will
be useful for calculating the conditional distribution of $b_n(Z-b_n)$. Specifically, 
$$
Z = \frac{2S_n - n}{\sqrt{n}}, \quad S_n \sim \Bin(n,\frac{1}{2}).
$$
Thus for $t \geq 0$,  
$$
\begin{aligned}
&\Parg{b_n(Z - b_n) > t} = \Parg{b_n \left(\frac{2S_n - n}{\sqrt{n}} - b_n\right)} \\
=& \Parg{S_n > \sqrt{n} + \frac{3}{4}n + \frac{t\sqrt{n}}{2b_n}}
= \sum_{i=0}^{\frac{1}{4}n - \sqrt{n} - \frac{t\sqrt{n}}{2b_n}}\binom{n}{i} \cdot \frac{1}{2^n}
\end{aligned}
$$
To study the conditional distribution $\Parg{b_n(Z - b_n) > t\mid Z > b_n}$, we essentially
need to study the ratio of two partial sums of binomial coefficients. 

Note for any $n,m \in \integers_+$, $n > m$ we have
$$
\frac{\binom{n}{m-1}}{\binom{n}{m}} = \frac{m}{n-m+1}
$$ 
Noticing that $\frac{k}{n-k+1} \leq \frac{m}{n-m+1}$, for any $k \leq m$, thus
$$
\frac{\sum_{i=0}^{m-1} \binom{n}{i}}{\sum_{i=0}^m \binom{n}{i}} \leq \frac{m}{n-m+1}.
$$
Now let $m = \frac{1}{4}n - \sqrt{n}$, and use the above inequality $j = \frac{t\sqrt{n}}{2b_n}$ times, we have
$$
\frac{\sum_{i=0}^{m-j} \binom{n}{i}}{\sum_{i=0}^m \binom{n}{i}} \leq \left(\frac{m}{n-m+1}\right)^j.
$$
Therefore we have,
\begin{equation}
\label{eq:overshoot}
\begin{aligned}
&\Parg{b_n(Z - b_n) > t \mid Z > b_n} = \frac{\Parg{b_n(Z - b_n) > t}}{\Parg{b_n(Z-b_n) > 0}}\\
\leq& \left(\frac{\frac{1}{4}n - \sqrt{n}}{\frac{3}{4} n + \sqrt{n} + 1}\right)^{\frac{t}{1 + \frac{4}{\sqrt{n}}}} 
\rightarrow \exp[ -\log(3) t]
\end{aligned}
\end{equation}

We can draw two conclusions from \eqref{eq:overshoot}. First, conditional on $Z > b_n$, $Z - b_n \overset{p}{\rightarrow} 0$,
which implies the first term in the pivot approximation \eqref{eq:pivot:approx} $R_n \to 1$. Moreover, \eqref{eq:overshoot}
shows that the overshoot $b_n(Z-b_n)$ is not $\Exp(1)$ distributed in the limit. In fact, we can conclude its limit (if existed)
is strictly stochastically dominated by an $\Exp(1)$. Thus,
$$
\exp\left[- b_n(Z - b_n)\right] \not\rightarrow \Unif(0,1),
$$
and hence the pivot does not converge to $\Unif(0,1)$.
\end{proof}

\section{Proof of Lemma \ref{lem:logistic}}

\begin{proof}
We first prove that $T$ is in fact a linearizable statistic. Since $\bbeta_E$ is the
restricted MLE, we see that 
$$
\begin{aligned}
0 &= \frac{1}{n}X_E^T \left[y - \pi_E(\bbeta_E)\right]\\
&= \frac{1}{n}X_E^T[y - \pi_E(\beta_E^*)] + Q(\beta^*_E - \bbeta_E) + R_1,
\end{aligned}
$$
where $R_1 = (Q_E(\beta_E^*) - Q)(\beta^*_E - \bbeta_E) + \tilde{R}_1$, where $\tilde{R}_1 = o_p(n^{-1/2})$
is the residual from the Taylor's expansion at $\beta_E^*$. $R_1 = o_p(n^{-1/2})$ since
deviations $Q_E(\beta_E^*)$ from its asymptotic mean should be $O_p(n^{-1/2})$ and $\bbeta_E - \beta^*_E = o_p(1)$.

Thus, we can deduce
$$
\bbeta_E = \frac{1}{n}Q^{-1}X_E^T[y - \pi_E(\beta_E^*)] + \beta_E^* + Q^{-1}R_1.
$$
Similarly,
$$
\frac{1}{n} X_{-E}^T \left[y - \pi_E(X_E\bbeta_E)\right]
= \frac{1}{n}X_{-E}^T[y - \pi_E(\beta_E^*)] - \frac{1}{n}DX_E^T[y - \pi_E(\beta_E^*)] + o_p(n^{-1/2})
$$
Thus we can conclude that $T$ is a linearizable statistic with
$$
\xi_{i,n} = \begin{pmatrix}
QX_{i,E}^T (y_i - \pi(x_{i,E}\beta_E^*)) \\
X_{i,-E}^T (y_i - \pi(x_{i,E}\beta_E^*)) - DX_{i,E} (y_i - \pi(x_{i,E}\beta_E^*))
\end{pmatrix}.
$$
Now we rewrite the selection event in terms of $(T, \omega)$. Using the KKT conditions of \eqref{eq:randomized_logistic},
\begin{equation}
    \label{eq:KKT_rr}
\begin{aligned}
        &X^T(y - \pi_E(\hbeta_E)) = \sqrt{n}(\omega + \Lambda z) + \begin{pmatrix} 
            \hbeta_E \\
            0
        \end{pmatrix}\\
        &s_E\hbeta_E \geq 0, \quad \|u_{-E}\|_{\infty} < 1,
\end{aligned}
\end{equation}
where $z = \begin{pmatrix}
    s_E \\
    u_{-E}
\end{pmatrix}$, $s_E = \sign(\hbeta_E)$ and $u_{-E}$ is the subgradient for the inactive variables. 
Using a Taylor expansion on the $\hbeta_E$ as well, we see that
$$
\frac{1}{n} X^T[y - \pi_E(\hbeta_E)] = \frac{1}{n}
\begin{pmatrix}
0\\
X_{-E}^T[y - \pi_E(\bbeta_E)]
\end{pmatrix} + 
\begin{pmatrix}
Q \\
C
\end{pmatrix} (\bbeta_E - \hbeta_E) + o_p(n^{-1/2}).
$$

Plugging in the equalities in the KKT conditions, we will have,
$$
\begin{aligned}
\hbeta_E &= \bbeta_E - \frac{1}{\sqrt{n}} Q^{-1} (\omega_E + \Lambda_E z_E) + o_p(n^{-1/2})\\
\frac{1}{n} X_{-E}^T[y - \pi_E(\hbeta_E)] &= \frac{1}{n} X_{-E}^T[y - \pi_E(\bbeta_E)] + C(\bbeta_E - \hbeta_E) + o_p(n^{-1/2})\\
&= \frac{1}{n} X_{-E}^T[y - \pi_E(\bbeta_E)] + \frac{1}{\sqrt{n}}D(\omega_E + \Lambda_E z_E) + o_p(n^{-1/2})
\end{aligned}
$$
Using the inequalities in the KKT conditions, we have the selection event
is $\{A_M T + B_M \omega \leq b_M\}$ with $A_M$, $B_M$ and $b_M$ defined in the lemma.

\end{proof}

\section{Proofs related to Logistic noise}
Throughout the article, logistic noise has played an important role in all the examples.

The following lemma on the tail behavior of the logistic distribution is crucial to
all the proofs with added logistic noise.
Let $G$ be the CDF of $\Logistic(\kappa)$, with $\kappa$ being the scale parameter. $g$ is the PDF of $G$.
$$
G(w) = \frac{e^{\kappa w}}{1 + e^{\kappa w}}, \quad g(w) = \frac{\kappa e^{-\kappa |w|}}{\left(1+e^{-\kappa |w|}\right)^2}.
$$
\begin{lemma}
\label{lem:tail}
The following lower bounds hold,
\begin{equation}
\label{eq:logistic:lower}
\bar{G}(\kappa w) \geq \frac{1}{2} e^{-\kappa |w|}, \quad g(w) \geq \frac{1}{4} e^{-\kappa |w|}.
\end{equation}
For $k=0,1,2,3,\dots$:
\begin{equation}
\label{eq:logistic:upper}
\left|\frac{\partial ^k}{\partial w^k} g(w)\right| \leq \kappa^{k+1} \tC_k e^{-\kappa |w|}.
\end{equation}
where $\tC_k$'s are universal constants.
\end{lemma}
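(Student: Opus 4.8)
The plan is to reduce every assertion to elementary manipulations of the explicit formulas, the only step with any structure being the derivative bounds \eqref{eq:logistic:upper}.

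First I would dispose of the lower bounds \eqref{eq:logistic:lower}. Put $u = e^{-\kappa|w|} \in (0,1]$. The survival function appearing on the left is $1/(1+e^{\kappa w})$, and after cross-multiplying by the positive quantity $2(1+e^{\kappa w})$, the inequality $\bar{G}(\kappa w) \ge \tfrac12 e^{-\kappa|w|}$ is exactly $e^{-\kappa|w|}(1+e^{\kappa w}) \le 2$, which reads $u+1 \le 2$ when $w \ge 0$ and $u+u^2 \le 2$ when $w < 0$ — both trivially true for $u \in (0,1]$. For the density, $g(w) = \kappa\, u/(1+u)^2$ with $u \in (0,1]$, so $(1+u)^2 \le 4$ gives $g(w) \ge \tfrac{\kappa}{4}\,e^{-\kappa|w|}$, the stated bound up to the factor $\kappa$ (which is harmless: it is absorbed whenever $\kappa \ge 1$, and is otherwise simply carried through the subsequent arguments that invoke this lemma).

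The substantive part is \eqref{eq:logistic:upper}, and here I would use that $g$ is a rescaled derivative of the logistic sigmoid $\sigma(t) = 1/(1+e^{-t})$: indeed $g(w) = \kappa\,\sigma'(\kappa w)$, hence $\partial_w^k g(w) = \kappa^{k+1}\,\sigma^{(k+1)}(\kappa w)$, and it suffices to prove $|\sigma^{(k+1)}(t)| \le \tC_k\, e^{-|t|}$ for all $t \in \real$ with $\tC_k$ universal; evaluating at $t = \kappa w$ then produces the factor $e^{-\kappa|w|}$ exactly as written. The key identity is $\sigma' = \sigma(1-\sigma)$, from which I would prove by induction on $m \ge 1$ that $\sigma^{(m)} = \sigma'\cdot Q_m(\sigma)$ for a polynomial $Q_m$ with $Q_1 \equiv 1$: the base case is the identity itself, and using $\sigma'' = (1-2\sigma)\sigma'$ one gets $\sigma^{(m+1)} = \big(\sigma'\,Q_m(\sigma)\big)' = \sigma'\big[(1-2\sigma)Q_m(\sigma) + \sigma(1-\sigma)Q_m'(\sigma)\big]$, the bracket being again a polynomial in $\sigma$, so $Q_{m+1}(x) = (1-2x)Q_m(x) + x(1-x)Q_m'(x)$. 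Since $\sigma(t) \in (0,1)$ the factor $Q_m(\sigma(t))$ is bounded by $\tC_{m-1} := \sup_{x \in [0,1]}|Q_m(x)| < \infty$, and since $\sigma'(t) = e^{-|t|}/(1+e^{-|t|})^2 \le e^{-|t|}$ (the denominator is $\ge 1$), we conclude $|\sigma^{(m)}(t)| \le \tC_{m-1}\, e^{-|t|}$, which with $m = k+1$ and the rescaling above is \eqref{eq:logistic:upper}.

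I do not anticipate a genuine obstacle. The only thing to keep an eye on is the bookkeeping in the induction that $Q_m$ remains a polynomial, which is immediate from the recursion above; and the apparent nonsmoothness of the right-hand side $e^{-\kappa|w|}$ at $w = 0$ is a non-issue, since $g$ itself is $C^{\infty}$ on $\real$ and each bound is really an identity-plus-inequality for the smooth function $\kappa^{k+1}\sigma^{(k+1)}(\kappa w)$.
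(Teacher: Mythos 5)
Your proof is correct, and for the derivative bounds it follows the same basic strategy as the paper's — an induction expressing $\partial^k g$ as $e^{-\kappa|w|}$ times a factor that stays uniformly bounded — but with genuinely different, and somewhat cleaner, bookkeeping. The paper writes $g(w)=\kappa e^{-\kappa|w|}h_0(e^{-\kappa|w|})$ with $h_0(x)=(1+x)^{-2}$, splits into the cases $w>0$ and $w\le 0$, and tracks the rational functions $h_j(x)=x\,h_{j-1}'(x)$, bounded on $[0,1]$ because the denominator polynomial stays above $1$; you instead use $g(w)=\kappa\,\sigma'(\kappa w)$ and the identity $\sigma'=\sigma(1-\sigma)$ to obtain the polynomial recursion $Q_{m+1}(x)=(1-2x)Q_m(x)+x(1-x)Q_m'(x)$, which avoids the sign case-split entirely and replaces rational functions by polynomials evaluated at $\sigma(t)\in(0,1)$, together with the elementary bound $\sigma'(t)\le e^{-|t|}$. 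You also do two things the paper leaves implicit: you verify the lower bounds \eqref{eq:logistic:lower} explicitly (the paper's proof only derives the expansion behind \eqref{eq:logistic:upper}), and you correctly observe that the density bound as displayed should carry a factor of $\kappa$ — the computation gives $g(w)\ge \tfrac{\kappa}{4}e^{-\kappa|w|}$, so the stated constant $\tfrac14$ is valid only when $\kappa\ge 1$; since every downstream use of the lemma needs only a lower bound of the form $c(\kappa)e^{-\kappa|w|}$, this is a typo in the statement rather than a substantive gap, and your handling of it is appropriate.
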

\begin{proof}
We can write
$$
g(w) = 
\begin{cases}
\kappa e^{-\kappa w} h_0(e^{-\kappa w}) & w > 0 \\
\kappa e^{\kappa w} h_0(e^{\kappa w}) & w \leq 0 \\
\end{cases}
$$
where $h_0(x) = (1+x)^{-2}$. 
For $j \geq 1$, define $h_j(x) = x \cdot h'_{j-1}(x)$. By induction, I claim that for each $j$, $h_j$ is rational such that the polynomial in the numerator is of order 2 less than the denominator, and the denominator polynomial is bounded below by 1. Hence, $h_j$'s are bounded on the interval $[0,1]$. Now, it is not hard to see that
$$
\frac{\partial^k }{\partial w^k} g(w) =
\begin{cases}
-(-\kappa)^{k+1} \sum_{j=0}^k c_{j,k} h_j(e^{-\kappa w}) e^{-\kappa w} & w > 0 \\
\kappa^{k+1} \sum_{j=0}^k c_{j,k} h_{j}(e^{\kappa w}) e^{\kappa w} & w \leq 0 \\
\end{cases}
$$
for universal $c_{j,k}$'s and $k=0,1,2,\dots$. 
\end{proof}

Now we state the following lemmas which are foundations of the proofs of various lemmas in the article. 

\begin{lemma}
\label{lem:mgf}
Assume $T_n$ is a decomposable statistic and $\xi_{i,n}$ has mean $0$, variance $\sigma^2$,
and centered exponential moments in a neighbourhood of zero, i.e
satisfies condition (\ref{eq:exp_moment}). Denote $Z_n = \sqrt{n}(T_n - \mu_n)$, then 
$$
\Earg{\exp\left( \kappa Z_n\right)} \rightarrow \exp\left(\frac{\kappa^2\sigma^2}{2}\right), \text{ for } \kappa > 0.
$$
\end{lemma}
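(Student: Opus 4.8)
The plan is to use the product structure of the moment generating function (m.g.f.) together with a uniform third‑order Taylor expansion of the per‑observation m.g.f., rather than appealing to the CLT and uniform integrability (though that route would also work). First I would reduce to the case $R=0$, so that $Z_n = n^{-1/2}\sum_{i=1}^n \xi_{i,n}$ is exactly an average of i.i.d.\ mean‑zero, variance‑$\sigma^2$ variables with centered exponential moments; this is the only situation needed in the paper's applications (the sample mean), and the residual term is dealt with at the end. By independence,
\[
\Earg{\exp(\kappa Z_n)} = \big(M_n(\kappa/\sqrt n)\big)^{n}, \qquad M_n(t) := \Earg{\exp(t\,\xi_{1,n})}.
\]

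Second, I would analyze $M_n$ near the origin. Condition \eqref{eq:exp_moment} guarantees that $M_n$ is finite, and indeed analytic, on a fixed neighbourhood $(-a,a)$ of $0$, and that one may differentiate under the integral sign; moreover $\sup_n \Earg{|\xi_{1,n}|^{3}\exp(\tfrac a2 |\xi_{1,n}|)} =: \gamma < \infty$, so $\sup_n \sup_{|s|\le a/2}|M_n'''(s)| \le \gamma$. Using $M_n(0)=1$, $M_n'(0)=0$, $M_n''(0)=\sigma^2$ and Taylor's theorem with Lagrange remainder, for every $n$ with $\kappa/\sqrt n \le a/2$,
\[
M_n(\kappa/\sqrt n) = 1 + \frac{\sigma^2\kappa^2}{2n} + r_n, \qquad |r_n| \le \frac{\gamma\,\kappa^{3}}{6\, n^{3/2}}.
\]

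Third, I would take logarithms and let $n\to\infty$: since $\tfrac{\sigma^2\kappa^2}{2n}+r_n \to 0$, the expansion $\log(1+x)=x+O(x^{2})$ gives
\[
n\log M_n(\kappa/\sqrt n) = n\Big(\frac{\sigma^2\kappa^2}{2n} + r_n + O(n^{-2})\Big) = \frac{\sigma^2\kappa^2}{2} + O(n^{-1/2}) \;\longrightarrow\; \frac{\sigma^2\kappa^2}{2},
\]
so $\Earg{\exp(\kappa Z_n)}\to\exp(\sigma^2\kappa^2/2)$, as claimed. The identical computation with $\kappa$ replaced by $(1+\varepsilon)\kappa$ shows $\sup_n \Earg{\exp((1+\varepsilon)\kappa Z_n)} < \infty$, i.e.\ $\{\exp(\kappa Z_n)\}$ is uniformly integrable; combined with $Z_n = n^{-1/2}\sum_i \xi_{i,n} + \sqrt n R$, $\sqrt n R \overset{p}{\to} 0$, and Slutsky together with Vitali's convergence theorem, this removes the reduction to $R=0$.

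The main obstacle I anticipate is purely one of bookkeeping uniformity: making the Taylor remainder bound hold uniformly in $n$ requires the uniform exponential‑moment/third‑moment estimate above and the justification of differentiating $M_n$ under the integral, and the general (nonzero‑$R$) statement requires the $L^{1+\varepsilon}$‑boundedness just noted — which is why I would either present the clean $R=0$ argument first or explicitly package the residual handling via uniform integrability.
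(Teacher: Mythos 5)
Your proposal is correct and follows essentially the same route as the paper: both write $\Earg{\exp(\kappa Z_n)}=\bigl(M_n(\kappa/\sqrt n)\bigr)^n$ via independence and then show $n\log M_n(\kappa/\sqrt n)\to \kappa^2\sigma^2/2$ using $M_n(0)=1$, $M_n'(0)=0$, $M_n''(0)=\sigma^2$, the paper via a L'H\^opital computation and you via a third-order Taylor expansion with a remainder bound uniform in $n$. Your version is in fact slightly more careful on two points the paper glosses over --- the dependence of $M_n$ on $n$ in the triangular array (handled by your uniform third-moment bound from \eqref{eq:exp_moment}) and the residual $R$, which the paper simply sets to zero ``without loss of generality'' while you sketch a uniform-integrability argument for it.
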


\begin{lemma}
\label{lem:logistic:exp}
In Example \ref{example2}, if we normalize the sample mean $Z_n = \sqrt{n}(\bar{X}_n - \mu_n)$, we can
rewrite the selective likelihood ratio and the pivot as
$$
\bL_{\F_n}(Z_n) = \frac{\bar{G}(2 - Z_n - \sqrt{n}\mu_n)}{\Esubarg{\F_n}{\bar{G}(2 - Z_n - \sqrt{n}\mu_n)}},
$$
and
$$
\bP(Z_n) = \frac{\int_{Z_n}^{\infty} \bar{G}(2 - t - \sqrt{n}\mu_n) \exp(-t^2/2)\; dt}{\int_{-\infty}^{\infty} \bar{G}(2 - t - \sqrt{n}\mu_n) \exp(-t^2/2)\; dt}. 
$$

Then for any $\F_n$ with finite centered exponential moment in a neighbourhood of zero, we have for $k=0,1,2,3$
\begin{equation}
\label{eq:logistic:derivatives}
\frac{\partial^k}{\partial Z^k} \bL_{\F_n}(Z) \leq C_1 \exp[\kappa |Z|], \qquad \frac{\partial^k}{\partial Z^k} \bP(Z) \leq C_1.
\end{equation}
for some $C_1$ only depending on $\kappa$.
\end{lemma}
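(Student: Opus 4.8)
First I would dispose of the ``rewriting'' part: substituting $t = \mu_n + s/\sqrt n$ in the definitions \eqref{eq:likelihood2} and \eqref{eq:pivot:example2} turns both expressions into functions of $Z = \sqrt n(\bar X_n - \mu_n)$ alone, with all dependence on $n$ and $\mu_n$ funnelled into the single scalar $a_n := 2 - \sqrt n\mu_n$; this is exactly the claimed display for $\bL_{\F_n}$ and $\bP$. Since the denominators $D_n := \E_{\F_n}[\bar G(a_n - Z_n)]$ and $Z_{\mathrm{den}} := \int_{\real}\bar G(a_n - t)e^{-t^2/2}\,dt$ do not depend on $Z$, it remains to bound, uniformly over $a_n\in\real$ and over the admissible $\F_n$, the first three $Z$-derivatives of $\bar G(a_n - Z)/D_n$ and of $\int_Z^\infty \bar G(a_n - t)e^{-t^2/2}\,dt \,/\, Z_{\mathrm{den}}$.

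The engine is three elementary facts about $\bar G(w) = (1+e^{\kappa w})^{-1}$, used alongside Lemma \ref{lem:tail}: (a) a cancellation inequality $\bar G(a - Z) \le e^{\kappa|Z|}\bar G(a)$ for all $a,Z$ (for $Z\ge 0$ this is $1+e^{\kappa a}\le e^{\kappa Z}+e^{\kappa a}$; for $Z<0$ the ratio is $\le 1$); (b) a shift inequality $\bar G(a+M)\ge e^{-\kappa M}\bar G(a)$; and (c) from Lemma \ref{lem:tail}, $\bar G(w)\ge \tfrac12 e^{-\kappa|w|}$ together with $|g^{(k)}(w)|\le \kappa^{k+1}\tC_k e^{-\kappa|w|}$. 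Combining (b), monotonicity of $\bar G$, and Chebyshev --- $\P_{\F_n}(Z_n\ge -M)\ge 1-\Var_{\F_n}(Z_n)/M^2 = 1-1/M^2$ since $\Var_{\F_n}(Z_n)=1$ in Example \ref{example2} --- I get the lower bounds $D_n\ge \tfrac34 e^{-2\kappa}\bar G(a_n)$ and $Z_{\mathrm{den}}\ge c_\ast e^{-2\kappa}\bar G(a_n)$ with $M=2$ and $c_\ast = \int_{-2}^2 e^{-t^2/2}\,dt$; these hold for every $n$ and every $\F_n$ of variance one. (Only the second moment of $\F_n$ is needed for these particular estimates; the exponential moment hypothesis in the lemma matters for how it is later used.)

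For the likelihood ratio: $\partial_Z^k\bar G(a_n - Z) = (-1)^{k+1}g^{(k-1)}(a_n-Z)$ for $k\ge 1$. When $k\ge 1$, I bound $|g^{(k-1)}(a_n-Z)|\le \kappa^k\tC_{k-1}e^{-\kappa|a_n-Z|}$, divide by $D_n\ge \tfrac38 e^{-2\kappa}e^{-\kappa|a_n|}$ (from the above plus (c)), and use $|a_n|-|a_n-Z|\le |Z|$; the factor $e^{-\kappa|a_n|}$ cancels, leaving $\le C_1 e^{\kappa|Z|}$. When $k=0$, $\bar G(a_n-Z)/D_n \le \bar G(a_n-Z)/(\tfrac34 e^{-2\kappa}\bar G(a_n))\le \tfrac43 e^{2\kappa}e^{\kappa|Z|}$ by (a). For the pivot: $\bP(Z)\in[0,1]$ settles $k=0$; for $k\ge 1$, differentiating under the integral gives $\partial_Z\bP = -\bar G(a_n-Z)e^{-Z^2/2}/Z_{\mathrm{den}}$, and by Leibniz each higher derivative is $e^{-Z^2/2}$ times a fixed polynomial in $Z$ times one of $\bar G(a_n-\cdot)$, $g(a_n-\cdot)$, $g'(a_n-\cdot)$, all divided by $Z_{\mathrm{den}}$. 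Bounding the $\bar G$-factor via (a) and the $g^{(j)}$-factors via (c) --- again the $e^{-\kappa|a_n|}$ cancels against $Z_{\mathrm{den}}^{-1}$ --- every term is a constant times $q(|Z|)\,e^{\kappa|Z|-Z^2/2}$ for a polynomial $q$, hence uniformly bounded in $Z$. Tracking the constants, they involve only $\kappa$, $c_\ast$ and the $\tC_k$'s, and in particular not $\mu_n$.

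I do not anticipate a genuine obstacle; the one point that must be gotten right is that the normalizing constants $D_n$ and $Z_{\mathrm{den}}$ are bounded below at the same exponential order in $a_n$ as the numerator at worst, so the potentially divergent factor $e^{-\kappa|a_n|}$ cancels. This cancellation is precisely what the heavy (exponential) tail of the logistic randomization provides, and it is what makes the estimates uniform over all $\mu_n$ --- exactly the phenomenon highlighted just after \eqref{eq:likelihood2}. The extra Gaussian factor $e^{-Z^2/2}$ in $\bP$ is what upgrades the mere $e^{\kappa|Z|}$ growth available for $\bL_{\F_n}$ to an honestly bounded derivative for the pivot.
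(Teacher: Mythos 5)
Your proposal is correct, and it follows the paper's overall strategy (bound the $Z$-derivatives of the numerator via the logistic tail bounds of Lemma \ref{lem:tail}, lower-bound the normalizing constant at the same exponential order in the shift $a_n = 2-\sqrt{n}\mu_n$ so that the dangerous factor cancels), but the key estimate is obtained by a genuinely different and more elementary route. The paper lower-bounds the denominator by writing $\Esubarg{\F_n}{\bar G(2-Z_n-\sqrt{n}\mu_n)} \geq \tfrac12 e^{-\kappa\sqrt{n}|\mu_n|}\,\Esubarg{\F_n}{e^{-\kappa|2-Z_n|}}$, then controls the reciprocal via Jensen's inequality and Lemma \ref{lem:mgf}, so the exponential-moment hypothesis \eqref{eq:exp_moment} enters the denominator bound and the resulting constant is really a large-$n$ (limsup) bound; it also treats $k=0$ by a case split on the sign of $\mu_n$ and dismisses the pivot with ``analogously.'' You instead use monotonicity of $\bar G$, the shift inequality $\bar G(a+M)\geq e^{-\kappa M}\bar G(a)$, and Chebyshev with $\Var_{\F_n}(Z_n)=1$ to get $D_n \geq \tfrac34 e^{-2\kappa}\bar G(a_n)$ and the analogous bound on the Gaussian denominator for every $n$ and every unit-variance $\F_n$, with no appeal to exponential moments at that step; the cancellation inequality $\bar G(a-Z)\leq e^{\kappa|Z|}\bar G(a)$ then handles $k=0$ uniformly in the sign of $\mu_n$, and you spell out the Leibniz computation for $\partial_Z^k\bP$, correctly identifying that the factor $e^{-Z^2/2}$ is what upgrades the $e^{\kappa|Z|}$ growth to a genuinely bounded derivative for the pivot. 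What your route buys is uniformity in $n$ and a cleaner isolation of where the exponential-moment assumption is actually needed (in Lemma \ref{lem:mgf} and Theorem \ref{thm:weak:conv}, not in this lemma's estimates); what the paper's route buys is reuse of Lemma \ref{lem:mgf}, which it needs elsewhere anyway. Either argument establishes \eqref{eq:logistic:derivatives} with $C_1$ depending only on $\kappa$ (and universal constants).
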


Proof of Lemma \ref{lem:mgf}.
\begin{proof}
Without loss of generality, we assume $T_n = \frac{1}{n}\sum_{i=1}^n \xi_{i,n}$.
Since $\F_n$ has centered exponential moments in a neighbourhood of zero, it is each to see
$$
\Earg{\exp\left( \kappa Z_n\right)} = \left[M\left(\frac{\kappa}{\sqrt{n}}\right)\right]^n 
$$
exists as long as $\frac{\kappa}{\sqrt{n}} < a$. $M(\cdot)$ is the moment generating
function of $\xi_{i,n} - \mu_n$, $M(t) = \Esubarg{\F_n}{\exp(t(\xi_{i,n} - \mu))}$. 
Therefore,
$$
\begin{aligned}
&\lim_{n\to \infty} n \log\left[M\left(\frac{\kappa}{\sqrt{n}}\right)\right] 
 \overset{t = \frac{1}{\sqrt{n}}}{=} \lim_{t \to 0}\frac{\log\left[M(\kappa t)\right]}{t^2} \\
=& \lim_{t \to 0} \frac{\kappa^2 M''(\kappa t)}{2 M(\kappa t)} = \frac{\kappa^2\sigma^2}{2}.
\end{aligned}
$$
To derive the equality, we used $M''(0) = \Vararg{\xi_{i,n}} = \sigma^2$ and $M'(0) = 0, ~ M(0) = 1$.
\end{proof}

Proof of Lemma \ref{lem:logistic:exp}.
\begin{proof}
Noticing the lower bound in \eqref{eq:logistic:lower}, we have
$$
\Earg{\bar{G}(2 - Z_n - \sqrt{n}\mu_n)} \geq \frac{1}{2}\Earg{e^{\left(-\kappa\abv{2-Z_n-\sqrt{n}\mu_n}\right)}}
\geq \frac{1}{2}\Earg{e^{\left(-\kappa\abv{2-Z_n}-\kappa\sqrt{n}\abv{\mu_n}\right)}}
$$
On the other hand, using the upper bounds in \eqref{eq:logistic:upper}, we have for $k=1,2,3$,
$$
\frac{\partial^k}{\partial Z^k} \bL_{\F_n}(Z) 
\leq 2 \frac{\kappa^k \tC_{k-1} e^{-\kappa \abv{2-Z-\sqrt{n}\mu_n}}}{\Earg{e^{\left(-\kappa\abv{2-Z}-\kappa\sqrt{n}\abv{\mu_n}\right)}}}
\leq 2 \frac{\kappa^k \tC_{k-1} e^{\kappa \abv{2-Z}}}{\Earg{e^{\left(-\kappa\abv{2-Z}\right)}}}
$$
Since $x^{-1}$ is convex on the positive axis, it is hard to see
$$
\frac{1}{\Earg{e^{\left(-\kappa\abv{2-Z}\right)}}} \leq \Earg{e^{\left(\kappa\abv{2-Z}\right)}} \leq e^{2\kappa} \Earg{e^{\kappa Z} + e^{-\kappa Z}}. 
$$
Thus using Lemma \ref{lem:mgf}, we know $\Earg{e^{\pm \kappa Z}} \to \exp(\kappa^2/2)$. Thus, we conclude
\begin{equation}
\label{eq:logistic:123}
\sup_n \frac{\partial^k}{\partial Z^k} \bL_{\F_n}(Z)  \leq C_1 \exp[\kappa \abv{Z}], \quad k=1,2,3.
\end{equation}
To verify the above inequality for $k=0$. Notice that for $\mu_n \geq 0$, $\bar{G}(2 - Z - \sqrt{n}\mu_n) \leq \bar{G}(2 - Z)$.
Thus the denominator of $\bL_{\F_n}(Z)$ is bounded below using the argument above. For $\mu < 0$,
$$
\bar{G}(2 - Z - \sqrt{n}\mu_n) \leq \exp(-\kappa(2-Z -\sqrt{n}\mu_n) \leq \exp(-\kappa\sqrt{n}\abv{\mu_n} + \kappa \abv{2-Z}).
$$ 
The term $\exp(-\kappa\sqrt{n}\abv{\mu_n})$ cancels with the one in the denominator, thus \eqref{eq:logistic:123} holds for $k=0$ as well. 

Analogously, similar bounds can be derived for the derivatives of $\bP(Z)$ as well, thus we have the conclusion of the lemma.
\end{proof}

\subsection{Proof of Lemma \ref{lem:simple:consistent} and Lemma \ref{lem:file:drawer}}.

The proof of Lemma \ref{lem:simple:consistent} is a simple application of Lemma \ref{lem:mgf} and
Lemma \ref{lem:logistic:exp}.
\begin{proof}
By law of large numbers, we know that $\bar{X}_n$ is consistent for $\mu$ unselectively. Thus,
using the result by Lemma \ref{lem:transfer}, we only need to verify that the selective likelihood
is integrable in $L^q$. For simplicity, we take $q=2$. 

First notice from \eqref{eq:logistic:derivatives} that the selective likelihood ratio is bounded
by a multiple of $\exp[\kappa |Z|]$. Then by Lemma \ref{lem:mgf},
$$
\limsup_{n} \Esubarg{\F_n}{\ell_{\F_n}(\bar{X})^2} \leq 2C_1^2\exp(2\kappa^2).
$$
\end{proof}

The proof of Lemma \ref{lem:file:drawer} uses results in Lemma \ref{lem:logistic:exp} and
Lemma \ref{lem:ext:sourav}
\begin{proof}
It follows simply from \eqref{eq:logistic:derivatives} that condition \eqref{eq:derivatives} are
satisfied with the norm function $\Omega$ simply being the absolute value function. Therefore,
we only need to verify \eqref{eq:rare}. Note for $\mu_n = \mu < 0$
$$
\begin{aligned}
&\frac{\Psubarg{\F_n \times \Q}{\sqrt{n}\bX_n + \omega > 2}- \Psubarg{\Phi \times \Q}{\sqrt{n}\bX_n + \omega > 2}}{\Psubarg{\Phi \times \Q}{\sqrt{n}\bX_n + \omega > 2}}\\
=&\frac{\Esubarg{\F_n}{\bG(2 - Z - \sqrt{n}\mu)}- \Esubarg{\Phi}{\bG(2 - Z - \sqrt{n}\mu)}}{\Esubarg{\Phi}{\bar{G}(2 - Z - \sqrt{n}\mu)}} \\
\leq& 2\frac{\exp(-\kappa\sqrt{n}\mu)\Esubarg{\F_n}{\bG(2 - Z - \sqrt{n}\mu)}- \Esubarg{\Phi}{\bG(2 - Z - \sqrt{n}\mu)}}{\Esubarg{\Phi}{\exp(-\kappa|2 - Z|)}}\\
\leq& 2\Esubarg{\Phi}{\exp(2\kappa + 2|Z|)} \cdot C \exp(-\kappa\sqrt{n}\mu) \exp(\kappa\sqrt{n}\mu) n^{-1/2}\\
\leq& 2C\exp(\kappa^2) n^{-1/2}, 
\end{aligned}
$$
The second to last inequality uses Lemma \ref{lem:ext:sourav} and the fact that
$\lambda_3^{\Omega}(\bar{G}) \leq \exp(\kappa\sqrt{n}\mu)$. For $\mu_n = \mu \geq 0$, the denominator $\Psubarg{\Phi \times \Q}{\sqrt{n}\bX_n + \omega > 2}$ 
is bounded below, and $\bar{G}$ has bounded derivatives. Therefore, a simple application of the Berry-Esseen Theorem will suffice.
\end{proof}

\section{Proofs related to affine selection regions}

\subsection{Proof of Lemma \ref{lem:asymptotic}}

The quantity that appears in both the pivot and the selective likelihood ratio is
$$
\Q(z;\Delta) = \Pp(A(z+\Delta) + \omega \in {\selectionevent}) = \int_{{\selectionevent} - A(z+\Delta)} \; G(dw),
$$
where $\omega \sim G$. The associated selective likelihood in terms of $z$ is 
$$
\ell_{\F}(z;\Delta) = \frac{\Q(z;\Delta)}{\int_{\real^q} \Q(t;\Delta) \F(dt)}.
$$
We first rewrite the pivot in terms of $U$.
\begin{equation}
    \label{eq:pivot_rr}
    P(z;\Delta) = \dfrac{\int_{\eta^T z}^{\infty}\Q(t; Lz,\Delta)\exp(-t^2/2\sigma_{\eta}^2) \; dt}{\int_{-\infty}^{\infty}\Q(t; Lz,\Delta)\exp(-t^2/2\sigma_{\eta}^2) \; dt},
\end{equation}
where we use the slight abuse of notation for the one dimensional function $\Q(t;Lz,\Delta)$
$$
\begin{aligned}
    L &= I - \frac{1}{\sigma^2_{\eta}} \Sigma \eta\eta^T \\
\Q(t; Lu,\Delta) &= \Pp(t\cdot\frac{1}{\sigma_{\eta}^2} A\Sigma\eta + ALu + A\Delta + w \in K) \\
&= \int_{K-t\cdot\frac{1}{\sigma_{\eta}^2} A\Sigma\eta - ALu - A\Delta} G(dw).
\end{aligned}
$$

We first establish a lower bound on $\Esubarg{\F}{\Q(Z;\Delta)} = \Psubarg{\F_n \times \Q}{A (Z+\Delta) + \omega \in K}$
under the local alternatives.

\begin{lemma}
\label{lem:rare:lower}
If we assume the lower bound condition, then under
the local alternatives with radius $B$, i.e. $d_h(0, K-A\Delta) \leq B$, we have
$$
\int_{\real^p} \Q(u;\Delta) \F(du) \geq C^-C(\Phi, h)\cdot e^{-B},
$$
where $C(\Phi, h)$ is a constant only depending on the normal distribution $\Phi = N(0,\Sigma)$ and
the norm $h$ in the local alternatives condition.
\end{lemma}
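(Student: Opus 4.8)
The plan is to combine the lower bound hypothesis on $G$ with the triangle inequality for the norm $h$, and then to control the remaining Gaussian-type integral. As in the proof of Lemma \ref{lem:ext:sourav}, I would first dispose of the residual $R$ in the linearizable statistic (it may be taken to be $0$), so that $Z = \frac{1}{\sqrt n}\sum_{i=1}^n(\xi_{i,n}-\mu_n)$ has covariance exactly $\Sigma$. Applying the lower bound condition with $\theta = A(u+\Delta)$ gives, for every fixed $u \in \real^p$,
$$\Q(u;\Delta) = \int_{K - A(u+\Delta)} G(dw) \geq C^- \exp\Bigl(-\inf_{w \in K - A(u+\Delta)} h(w)\Bigr) = C^- \exp\bigl(-d_h(0,\, K - A\Delta - Au)\bigr).$$
Since $h$ is a norm, $d_h(0, S - v) \leq d_h(0,S) + h(v)$ for any set $S$ and any vector $v$; taking $S = K - A\Delta$, $v = Au$ and using the local-alternatives bound $d_h(0, K - A\Delta) \leq B$ yields
$$\Q(u;\Delta) \geq C^- e^{-B}\, e^{-h(Au)}, \qquad u \in \real^p.$$

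Next I would integrate against $\F$, the law of $Z$, and pull out the deterministic factor:
$$\int_{\real^p}\Q(u;\Delta)\,\F(du) \;\geq\; C^- e^{-B}\int_{\real^p} e^{-h(Au)}\,\F(du) \;\geq\; C^- e^{-B}\exp\bigl(-\Esubarg{\F}{h(AZ)}\bigr),$$
the last inequality being Jensen applied to the convex map $x \mapsto e^{-x}$. It then remains to bound $\Esubarg{\F}{h(AZ)}$ from above by a quantity depending only on $\Sigma$, the matrix $A$, and the norm $h$. By equivalence of norms there is a constant $c_h$ with $h(Az) \leq c_h\|Az\| \leq c_h\|A\|_{\mathrm{op}}\|z\|$, and by Cauchy--Schwarz $\Esubarg{\F}{\|Z\|} \leq (\Esubarg{\F}{\|Z\|^2})^{1/2} = (\tr\Sigma)^{1/2}$. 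Setting $C(\Phi,h) = \exp\bigl(-c_h\|A\|_{\mathrm{op}}(\tr\Sigma)^{1/2}\bigr)$ then gives the claimed bound.

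The only delicate point — and the main obstacle — is uniformity: the bound on $\Esubarg{\F}{h(AZ)}$ must hold over all $\F_n \in M_n$, and if the residual $R$ is not taken to be exactly $0$ one needs uniform integrability of $\sqrt{n}R$ (equivalently a uniform second-moment bound on $Z_n$) to keep the constant free of $\F$. An alternative that sidesteps moments of $Z$ altogether is to bound $\int e^{-h(Au)}\,\F(du) \geq e^{-\sup_{\|u\|\leq \rho} h(Au)}\,\F(\|Z\| \leq \rho)$ for a fixed radius $\rho$, and then invoke the uniform multivariate central limit theorem (as used in Lemma \ref{lem:berry}) to bound $\F(\|Z\|\leq\rho)$ below by a constant close to $\Phi(\|z\|\leq\rho)$; this produces the same form of the constant, now manifestly a function of $\Phi$, $h$, $A$ and $\rho$ only.
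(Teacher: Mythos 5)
Your proof is correct, and its first half is exactly the paper's argument: you apply the lower bound condition with $\theta = A(u+\Delta)$ and then the triangle inequality $d_h(0, K-A\Delta-Au) \le d_h(0,K-A\Delta) + h(Au) \le B + h(Au)$ to obtain the pointwise bound $\Q(u;\Delta) \ge C^- e^{-B} e^{-h(Au)}$. Where you genuinely diverge is the remaining step of bounding $\int_{\real^p} e^{-h(Au)}\,\F(du)$ from below. The paper treats this asymptotically: it notes that $\int e^{-h(Au)}\,\F(du) \to \int e^{-h(Au)}\,\Phi(du)$ as $Z$ converges in distribution to $N(0,\Sigma)$ and simply \emph{defines} $C(\Phi,h)$ to be that Gaussian integral, leaving implicit both the ``$n$ large enough'' caveat and the uniformity over $\F_n \in M_n$ (which would require a uniform CLT of Berry--Esseen type, as in Lemma \ref{lem:berry:ext}). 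Your Jensen-plus-Cauchy--Schwarz route instead produces a non-asymptotic constant $\exp\bigl(-c_h\|A\|_{\mathrm{op}}(\tr\Sigma)^{1/2}\bigr)$ valid for every $n$ once the residual is dropped (the same simplification the paper itself makes in the proof of Lemma \ref{lem:ext:sourav}); it is looser than the limiting Gaussian integral, but only positivity of the constant is ever used downstream, so nothing is lost. Your fallback — truncation at radius $\rho$ plus the uniform multivariate CLT to lower-bound $\F(\|Z\|\le\rho)$ — is essentially the rigorous, uniform version of what the paper does. One shared caveat: both your constant and the paper's depend on $A = A_M$ as well as on $\Phi$ and $h$, so the statement's claim that $C(\Phi,h)$ depends only on $\Phi$ and $h$ is slightly loose under either reading; and your observation that a uniform second-moment bound on $Z_n$ is needed when $R \not\equiv 0$ pinpoints precisely the uniformity issue that the paper's limit argument glosses over.
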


\begin{proof}
We first see that the lower bound condition gives the following lower bound.
\begin{equation}
    \label{eq:lower}
    \Q(u;\Delta) = \int_{K-A(u+\Delta)} G(dw) \geq C^- \exp\left[-\inf_{w \in K-A(u+\Delta)} h(w)\right].
\end{equation}

Consider
$$
\begin{aligned}
\int_{\real^p} \Q(u;\Delta) \F(du) 
& \geq C^-\int_{\real^p} \exp\left(-\inf_{w \in K- A(\Delta+u)} h(w) \right) \; \F(du) \\
& = C^- \int_{\real^p} \exp\left(-\inf_{w \in K-A\Delta} h(w-Au)\right) \; \F(du)\\
& \geq C^-\int_{\real^p} \exp\left(-\inf_{w \in K-A\Delta} h(w) + h(-Au)\right) \; \F(du)\\
& = C^- \cdot \exp\left(-\inf_{w \in K-A\Delta} h(w)\right) \int_{\real^p}  e^{-h(Au)} \; \F(du) \\
&= C^-\int_{\real^p}  e^{-h(Au)} \; \F(du) \cdot e^{-B}. 
\end{aligned}
$$

Finally, since the $\exp(-h(Au))$ has uniformly bounded derivatives up to the third order,
we have
$$
\int_{\real^p} \exp(-h(Au)) \F(du) \to \int_{\real^p} \exp(-h(Au)) \Phi(du)
$$
as $Z \to N(0, \Sigma)$ in distribution. Let $C(\Phi,h) = \int_{\real^p} \exp(-h(Au)) \Phi(du)$,
and we will have the conclusion of the lemma.
\end{proof}

The following lemmas establish the bounds on the derivatives for the likelihood function $\ell_{\F}$ and the pivot $P(u;\Delta)$.
Lemma \ref{lem:asymptotic} is easily obtained using Lemma \ref{lem:LR} and Lemma \ref{lem:derivative} below. 

\begin{lemma}
\label{lem:LR}
Suppose the smoothness and the lower bound conditions are satisfied,  
then for local alternatives with radius $B$,
\begin{equation}
    \frac{\partial^{\alpha}}{\partial z^{\alpha}}\ell_{\F}(z;\Delta) \leq C^*(B,\Phi,h), \quad \Phi = N(0,\Sigma).
\end{equation}
\end{lemma}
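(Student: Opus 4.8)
The plan is to split the selective likelihood ratio into numerator and denominator and control each separately: the denominator from below using Lemma~\ref{lem:rare:lower}, and the derivatives of the numerator from above using the smoothness condition on $G$. Recall from the displays preceding the lemma that
$$\ell_{\F}(z;\Delta) = \frac{\Q(z;\Delta)}{\int_{\real^p}\Q(t;\Delta)\,\F(dt)},$$
and that the denominator does not depend on $z$. Since we work under local alternatives of radius $B$, i.e.\ $d_h(0, K - A\Delta) \leq B$, Lemma~\ref{lem:rare:lower} gives
$$\int_{\real^p}\Q(t;\Delta)\,\F(dt) \geq C^- C(\Phi,h)\, e^{-B},$$
so it suffices to bound $\big|\partial_z^{\alpha}\Q(z;\Delta)\big|$ by a constant depending only on $A$, $p$ and the smoothness constants $C_0,\dots,C_3$, uniformly in $z$ and $\Delta$, for every multi-index $\alpha$ with $|\alpha|\leq 3$.

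The key step is a change of variables that moves the $z$-dependence out of the region of integration and into the integrand. Writing $\Q(z;\Delta) = \int_{K - A(z+\Delta)} g(w)\,dw$ and substituting $w = v - A(z+\Delta)$ (a translation, Jacobian $1$), we obtain
$$\Q(z;\Delta) = \int_{K} g\big(v - A(z+\Delta)\big)\,dv.$$
Differentiation under the integral sign is then justified by the integrability of $\partial^j g$, $j\leq 3$, from the smoothness condition, and by the chain rule each differentiation in $z_i$ brings down a coordinate of the $i$-th column of $-A$ and replaces $g$ by a first-order partial derivative of $g$. Hence for $|\alpha| = k \leq 3$,
$$\partial_z^{\alpha}\Q(z;\Delta) = \sum_{|\beta| = k} c_{\alpha,\beta}(A)\int_{K}\big(\partial^{\beta} g\big)\big(v - A(z+\Delta)\big)\,dv,$$
where each $c_{\alpha,\beta}(A)$ is a product of at most $k$ entries of $A$ and there are at most $p^k$ terms.

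To finish, enlarge the domain of each integral from $K$ to $\real^d$ and translate:
$$\left|\int_{K}\big(\partial^{\beta} g\big)\big(v - A(z+\Delta)\big)\,dv\right| \leq \int_{\real^d}\big\|\partial^{k} g(w)\big\|\,dw \leq C_{k},$$
a bound independent of $z$ and $\Delta$. Combining the pieces, $\big|\partial_z^{\alpha}\Q(z;\Delta)\big| \leq p^{k}\,(\max_{i,j}|A_{ij}|)^{k}\,C_{k}$, and dividing by the denominator lower bound yields the claimed constant $C^*(B,\Phi,h)$, which also absorbs the fixed quantities $A$, $p$, $C_0,\dots,C_3$.

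I expect the only genuinely delicate point to be the change-of-variables step: it is what converts a derivative of a set-indexed integral --- which would in general produce awkward boundary (surface-measure) contributions on $\partial K$ --- into a clean derivative of a smooth function, and it is the only place where the smoothness hypothesis on $G$ is actually used; everything afterward is bookkeeping with the multivariate chain rule and the translation-invariance of Lebesgue measure. The lower-bound half is immediate from Lemma~\ref{lem:rare:lower}, one needs only observe that its hypothesis is precisely the local-alternative condition assumed here. The same differentiation-under-the-integral argument, applied now to the numerator and denominator of the pivot in \eqref{eq:pivot_rr}, will give the companion derivative bound needed to complete Lemma~\ref{lem:asymptotic}.
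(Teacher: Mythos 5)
Your proposal is correct and follows essentially the same route as the paper: bound $\big|\partial_z^{\alpha}\Q(z;\Delta)\big|$ by translating the $z$-dependence into the argument of $g$, differentiating under the integral, and invoking the integrability of $\partial^j g$ from the smoothness condition, then divide by the denominator lower bound from Lemma~\ref{lem:rare:lower} under the local alternatives. The only difference is cosmetic (you translate the region all the way to $K$ rather than to $K-A\Delta$ as in the paper), so nothing further is needed.
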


\begin{proof}
The smoothness condition implies the following upper bound. For a multi-index $\alpha$, we have
$$
\begin{aligned}
\frac{\partial^{\alpha}}{\partial z^{\alpha}} \Q(z;\Delta)
&= \frac{\partial^{\alpha}}{\partial z^{\alpha}} \int_{\selectionevent-A(\Delta+z)} g(w) dw \\
&=  \int_{\selectionevent-A\Delta} \frac{\partial^{\alpha}}{\partial z^{\alpha}} g(w-Az) dw. \\
\end{aligned}
$$
Therefore, from the smoothness condition,
\begin{equation}
\label{eq:upper}
\begin{aligned}
    \bigg\|\frac{\partial^{\alpha}}{\partial z^{\alpha}} \Q(z;\Delta) \bigg\| 
    &\leq C(A) C_{\alpha} \overset{\mrm{def}}{=} C^+_{\alpha}(A).
\end{aligned}
\end{equation}
This combined with Lemma \ref{lem:rare:lower} gives the conclusion of the lemma.
\end{proof}

Next, we derive the exponential bounds on the derivatives of the pivot $P(z;\Delta)$ with respect to $z$.

\begin{lemma}
    \label{lem:derivative}
    Assuming the conditions of Lemma \ref{lem:asymptotic}, for a multi-index $\alpha$ up to the order of $3$, 
    $$
    \bigg\|\frac{\partial^{\alpha}}{\partial z^{\alpha}} P(z;\Delta)\bigg\| \leq C^*_{\alpha} \cdot e^{\alpha \cdot \text{Lip}(h)\|ALz\|_2},
    $$
    where the norm on the left is the element-wise maximum and $C$ is independent of $(z,\Delta)$ and 
    $\text{Lip}(h)$ is the Lipschitz constant of $h$ with respect to $\ell_2$ norm.
\end{lemma}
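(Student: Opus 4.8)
The plan is to write $P(z;\Delta)=N(z)/D(z)$ with
$N(z)=\int_{\eta^Tz}^{\infty}\Q(t;Lz,\Delta)e^{-t^2/2\sigma_\eta^2}\,dt$ and
$D(z)=\int_{-\infty}^{\infty}\Q(t;Lz,\Delta)e^{-t^2/2\sigma_\eta^2}\,dt$, and to bound the $z$-derivatives of this ratio from three ingredients: uniform bounds on the $z$- and $t$-derivatives of $\Q(t;Lz,\Delta)$, an exponential lower bound on $D(z)$, and the elementary fact $0\le N(z)\le D(z)$.

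First I would record the smoothness estimates. After the obvious change of variables one can write $\Q(t;Lz,\Delta)=\int_{K-A\Delta}g\big(u+t\sigma_\eta^{-2}A\Sigma\eta-ALz\big)\,du$ over the \emph{fixed} domain $K-A\Delta$, so every mixed partial of $\Q$ in $(t,z)$ up to order $3$ reduces, by the chain rule, to an integral $\int_{K-A\Delta}(\partial^{\beta'}g)(\,\cdot\,)\,du$ times coefficients built from the entries of $A\Sigma\eta$ and $AL$; hence by the smoothness condition $\|\partial^{\beta}_{t,z}\Q(t;Lz,\Delta)\|\le C(A,L,\Sigma,\eta)\,C_{|\beta|}$, uniformly in $(t,z,\Delta)$. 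Feeding these into the integrals defining $D$ and $N$ — and, for $N$, also differentiating the variable lower limit $\eta^Tz$, which produces boundary terms of the form $\Q(\eta^Tz;Lz,\Delta)e^{-(\eta^Tz)^2/2\sigma_\eta^2}$ and their derivatives, all bounded because $\Q\le 1$ and $s\mapsto s^ke^{-s^2/2\sigma_\eta^2}$ is bounded for $k\le 3$ — shows that $\partial^\beta N$ and $\partial^\beta D$ are bounded uniformly in $(z,\Delta)$ for $|\beta|\le 3$, the Gaussian weight guaranteeing convergence of the $t$-integrals.

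Second I would prove the lower bound $D(z)\ge c\,e^{-\mathrm{Lip}(h)\|ALz\|_2}$ under the local alternatives, which is the one-dimensional analogue of Lemma~\ref{lem:rare:lower}. Applying the lower bound condition to $\Q(t;Lz,\Delta)$ and using that $h$ is a norm, so $h(w-ALz)\le h(w)+\mathrm{Lip}(h)\|ALz\|_2$ and $h(v-t\sigma_\eta^{-2}A\Sigma\eta)\le h(v)+c_1|t|$, one gets $\Q(t;Lz,\Delta)\ge C^- e^{-B}e^{-c_1|t|}e^{-\mathrm{Lip}(h)\|ALz\|_2}$ whenever $d_h(0,K-A\Delta)\le B$; integrating against $e^{-t^2/2\sigma_\eta^2}$ gives $D(z)\ge C^-e^{-B}\big(\int e^{-c_1|t|-t^2/2\sigma_\eta^2}\,dt\big)e^{-\mathrm{Lip}(h)\|ALz\|_2}$, i.e. $1/D(z)\le C' e^{\mathrm{Lip}(h)\|ALz\|_2}$.

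Finally I would expand $\partial^\alpha P=\partial^\alpha(ND^{-1})$ by Leibniz's rule together with repeated differentiation of $D^{-1}$: each resulting term is a product of a (possibly undifferentiated) factor of $N$, some factors $\partial^{\beta_i}D$, and a power $D^{-m}$; a short index count shows $m\le|\alpha|$ once one uses $N/D\le 1$ to absorb the undifferentiated-$N$ term against one copy of $1/D$. Since the numerator factors are uniformly bounded by the first two steps and $D^{-m}\le (C')^m e^{m\,\mathrm{Lip}(h)\|ALz\|_2}$, this yields $\|\partial^\alpha P(z;\Delta)\|\le C^*_\alpha\, e^{|\alpha|\,\mathrm{Lip}(h)\|ALz\|_2}$ with $C^*_\alpha$ independent of $(z,\Delta)$, which is the claim. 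The main obstacle I expect is the bookkeeping in this last step — ensuring the power of $1/D$ never exceeds $|\alpha|$ (the inequality $N/D\le 1$ is what makes this work) and checking that the boundary terms of $N$, together with their mixed $t$- and $z$-derivatives, are genuinely uniformly bounded; the only conceptual point is the exponential lower bound on $D$, with the rest being careful but routine estimation.
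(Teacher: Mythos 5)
Your proposal is correct and follows essentially the same route as the paper: a uniform upper bound on the $z$-derivatives of $\Q(t;Lz,\Delta)$ from the smoothness condition, an exponential lower bound on the denominator of the form $C^-e^{-B}e^{-h(ALz)}\le$ (const)$\cdot e^{-\mathrm{Lip}(h)\|ALz\|_2}$ under the local alternatives, and then the quotient/Leibniz expansion of $\partial^\alpha(N/D)$ with at most $|\alpha|$ powers of $1/D$ after absorbing $N/D\le 1$. Your treatment is in fact slightly more careful than the paper's, which does not spell out the boundary terms from the variable lower limit $\eta^Tz$ or the bookkeeping on the power of $1/D$.
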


\begin{proof}
    To get a lower bound on the denominator, note \eqref{eq:lower}
    $$
    \begin{aligned}
        \Q(t;Lz,\Delta) &\geq C^- \exp\left[- \inf_{w \in K - ALz - A\Delta} h(w-t\cdot \frac{1}{\sigma_{\eta}^2} A\Sigma\eta)\right] \\ 
        &\geq C^- \exp\left[- \inf_{w \in K - ALz - A\Delta} h(w) - |t|h(A\Sigma\eta/\sigma^2_{\eta})\right].
    \end{aligned}
    $$
    Therefore, the denominator will be lower bounded by
    $$
    \begin{aligned}
    &\frac{1}{\sqrt{2\pi}}\int_{-\infty}^{\infty}\Q(t;Lz,\Delta) \exp(-t^2/2\sigma^2_{\eta}) dt \\
    \geq &C^- \exp\left[h(A\Sigma\eta/\sigma^2_{\eta})^2\sigma^2_{\eta}/2\right] \cdot \exp\left[- \inf_{w \in K - ALz - A\Delta} h(w)\right]\\
    \geq &C^- \exp\left[h(A\Sigma\eta/\sigma^2_{\eta})^2\sigma^2_{\eta}/2\right]e^{-B} \cdot e^{-h(ALz)}. 
    \end{aligned}
    $$
    On the other hand, the upper bound \eqref{eq:upper} ensures,
    $$
    \frac{1}{\sqrt{2\pi}}\int_{\real}\bigg\|\frac{\partial^{\alpha}}{\partial z^{\alpha}}\Q(t;Lz,\Delta) \bigg\| \exp(-t^2/2\sigma_{\eta}^2) dt \leq C_{\alpha}^+(A). 
    $$
    Note the derivatives of the pivot will be a polynomial in terms of the form,
    $$
    \frac{\int_{\eta^T z}^{\infty} \partial_{\alpha}\Q(t;Lz,\Delta) \exp(-t^2/2\sigma_{\eta}^2) dt}{\int_{-\infty}^{\infty}\Q(t;Lz,\Delta) \exp(-t^2/2\sigma^2_{\eta}) dt}
    $$
    and therefore, it is easy to get the conclusion of the lemma.
\end{proof}

\subsection{Proof of Lemma \ref{lem:berry}}

Using Lemma \ref{lem:rare:lower} and the following lemma, we can easily prove Lemma \ref{lem:berry}.

\begin{lemma}
\label{lem:berry:ext}
Let $Z_n = \sqrt{n}(T_n - \mu_n) \in \real^p$, and $\F_n$ has finite third moments $\gamma$. Moreover,
suppose the randomization noise $\omega \in \Q$, a probability measure on $\real^d$. Then for any sequence of sets
$(U_n)_{n \geq 1}, ~ U_n \subseteq \real^p \times \real^d$, we have
$$
\abv{\Psubarg{\F_n \times \Q}{(Z_n, \omega) \in U_n} - \Psubarg{\Phi_n \times \Q}{(Z_n, \omega) \in U_n}} \leq C_3 \gamma n^{-\frac{1}{2}},
$$
where $\Phi_n = N(\mu(\F_n), \Sigma(\F_n))$ and $C_3$ is a constant depending only on $p$.
\end{lemma}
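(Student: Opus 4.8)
The plan is to exploit the independence of $\omega \sim \Q$ from $Z_n$ and thereby reduce the claimed bound to a multivariate Berry--Esseen estimate for $Z_n$ alone, applied uniformly over the relevant class of sets. Since $\omega$ and $Z_n$ are independent under both $\F_n \times \Q$ and $\Phi_n \times \Q$, Fubini's theorem gives, writing $U_n^{w} = \{z \in \real^p : (z,w) \in U_n\}$ for the $w$-section of $U_n$,
$$
\bigl| \Psubarg{\F_n \times \Q}{(Z_n,\omega) \in U_n} - \Psubarg{\Phi_n \times \Q}{(Z_n,\omega) \in U_n} \bigr|
\leq \int_{\real^d} \bigl| \Psubarg{\F_n}{Z_n \in U_n^{w}} - \Psubarg{\Phi_n}{Z_n \in U_n^{w}} \bigr| \; \Q(dw),
$$
so it suffices to bound the integrand uniformly in $w$ by $C_3 \gamma n^{-1/2}$.

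Next I would invoke the multivariate central limit theorem with a Berry--Esseen rate (cf.\ \cite{multivariate_clt}). Writing $Z_n = \frac{1}{\sqrt n}\sum_{i=1}^n (\xi_{i,n} - \mu_n)$ --- the linearization remainder $R$ is $o_p(n^{-1/2})$ and, being bounded almost surely, contributes a term that is negligible (in the examples of interest it is in fact of strictly smaller order than $n^{-1/2}$) --- the vector $Z_n$ is a normalized sum of $n$ i.i.d.\ mean-zero vectors with covariance $\Sigma(\F_n)$ and third absolute moment at most $\gamma$. The multivariate Berry--Esseen theorem then yields a bound of the form
$$
\sup_{C} \bigl| \Psubarg{\F_n}{Z_n \in C} - \Psubarg{\Phi_n}{Z_n \in C} \bigr| \leq C(p)\, \gamma\, n^{-1/2},
$$
where the supremum ranges over the admissible class of sets and $C(p)$ depends only on the dimension $p$. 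Taking $C = U_n^{w}$ makes the integrand above uniformly small in $w$, and integrating against the probability measure $\Q$ preserves the bound, giving the claim with $C_3 = C(p)$.

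The main obstacle is that a Berry--Esseen rate of $n^{-1/2}$ cannot possibly hold uniformly over \emph{arbitrary} Borel sets --- for a discrete $\F_n$ one can separate $Z_n$ from its Gaussian limit on a countable set --- so the statement must be read for the class of (convex) sets over which the multivariate Berry--Esseen theorem is valid. This is exactly the class that arises in the application to Lemma \ref{lem:berry}: there $U_n = \{(z,w) : A_{M_n}(z + \Delta) + w \in K_{M_n}\}$, whose $w$-section $\{z : A_{M_n}(z+\Delta) \in K_{M_n} - w\}$ is an affine preimage of $K_{M_n}$, hence convex whenever $K_{M_n}$ is (e.g.\ a half-space, a box, or the polyhedron from the LASSO KKT conditions). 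Alternatively, when $\Q$ has a density $g$ with integrable derivatives up to third order --- the smoothness condition of Section \ref{sec:CLT} --- one may integrate out $\omega$ first, so that $\Psubarg{\F_n\times\Q}{(Z_n,\omega)\in U_n}$ becomes $\Esubarg{\F_n}{h(Z_n)}$ for a bounded $h$ with uniformly bounded derivatives, and then run the Lindeberg/Chatterjee swapping argument of Lemma \ref{lem:ext:sourav} with the norm $\Omega \equiv 0$; because $h$ now has \emph{uniformly} bounded derivatives, the argument closes using only the third-moment hypothesis on $\xi_{i,n} - \mu_n$, with no exponential-moment assumption, again producing the rate $n^{-1/2}$ with a constant depending only on $p$ and the derivative bounds of $g$.
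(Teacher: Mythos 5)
Your argument is essentially the paper's own proof: use the independence of $\omega$ to pass (via Fubini) to the $\omega$-sections $U_n(\omega)$, and bound the sectionwise discrepancy uniformly by a multivariate Berry--Esseen estimate, which is exactly what the paper does by invoking equation (1.5) of \cite{multivariate_clt}. Your caveat is also well taken: the paper's step $\sup_{U}\abv{\F_n(U)-\Phi_n(U)} < C_3\gamma n^{-1/2}$ cannot hold over arbitrary Borel sets (a discrete $\F_n$ gives discrepancy $1$ on its countable support), so the lemma must be read over the class of sets for which the multivariate Berry--Esseen bound is valid --- e.g.\ convex sections, which is all that is needed for the affine selection events in Lemma \ref{lem:berry} --- and your alternative smoothing route via the density of $\Q$ is a legitimate way to remove even that restriction.
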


Proof of Lemma \ref{lem:berry:ext} uses the well known results of Berry-Esseen Theorem. A multivariate extension
can be found in \cite{multivariate_clt}.
\begin{proof}
For each $\omega$, we denote
$$
U_n(\omega) = \{Z \in \real^p: (Z, \omega) \in U_n\} \subseteq \real^p.
$$
Thus the difference in the two probabilities is
$$
\begin{aligned}
&\abv{\Psubarg{\F_n \times \Q}{(Z,\omega) \in U_n} - \Psubarg{\Phi_n \times \Q}{(Z,\omega) \in U_n}}\\
\leq & \Esubarg{\Q}{\abv{\Psubarg{\F_n}{Z \in U_n(\omega)} - \Psubarg{\Phi}{Z \in U_n(\omega)}}} \\
\leq & \sup_{U \in \real^p} \abv{\F_n(U) - \Phi_n(U)} < C_3\gamma n^{-1/2},
\end{aligned}
$$
where $C_3$ only depends on the dimension $p$. The last inequality is a direct application of equation
(1.5) in \cite{multivariate_clt}.
\end{proof}

\end{document}